\documentclass[12pt,leqno]{amsart}

\usepackage{amsmath,amsthm,amsfonts,amssymb,latexsym,amscd}
\usepackage{enumerate,graphicx,hyperref,verbatim}

\usepackage[matrix,arrow,curve,frame]{xy}

\swapnumbers

%
%

\setlength{\oddsidemargin}{0.0in}
\setlength{\evensidemargin}{0.0in}
\setlength{\topmargin}{0.25in}
\setlength{\textheight}{8.0in}
\setlength{\textwidth}{6.5in}
\setlength{\marginparwidth}{1.75in}
\setlength{\marginparsep}{0.10in}

\setlength{\parskip}{0.1truein}

%
%

\theoremstyle{plain}
    \newtheorem{theorem}{Theorem}[section]
    \newtheorem{lemma}[theorem]{Lemma}
    \newtheorem{corollary}[theorem]{Corollary}
    \newtheorem{proposition}[theorem]{Proposition}
    \newtheorem*{proposition*}{Proposition}
    

%

\theoremstyle{definition}
    \newtheorem{definition}[theorem]{Definition}

\theoremstyle{remark}

    \newtheorem{examples}[theorem]{Examples} 
        \newtheorem{example}[theorem]{Example}  
     \newtheorem{rem}[theorem]{Remark}
     
        \newtheorem*{question}{Questions}

\numberwithin{equation}{section}

\newcounter{ictr}

\newcommand{\E}{\mathcal{E}}

\renewcommand\max{\text{max}}
\newcommand\red{\text{red}}
\newcommand\alg{\text{alg}}

\newcommand\gcstar{\text{$G$-${C}^*$}}
\newcommand\cstar{\text{${C}^*$}}

\newcommand\leftam{{[\![}}
\newcommand\rightam{{]\!]}}

\newcommand{\N}{\mathbb{N}}
\newcommand{\Z}{\mathbb{Z}}
\newcommand{\R}{\mathbb{R}}

\newcommand{\C}{\mathbb{C}}

\newcommand{\h}{\mathcal{H}}
\newcommand{\K}{\mathcal{K}}

\newcommand{\Manoa}{M\=anoa}
\newcommand{\Hawaii}{Hawai\kern.05em`\kern.05em\relax i}

\title{Expanders, exact crossed products, and the Baum-Connes conjecture}

\author{Paul Baum}
\address{Department of Mathematics, Pennsylvania State University,
  University Park, PA 16802}
\email{baum@math.psu.edu}
\author{Erik Guentner}
\address{University of \Hawaii\ at \Manoa, Department of Mathematics, 2565
  McCarthy Mall, Honolulu, HI 96822-2273}
\email{erik@math.hawaii.edu}
\author{Rufus Willett}
\address{University of \Hawaii\ at \Manoa, Department of Mathematics, 2565
  McCarthy Mall, Honolulu, HI 96822-2273}
\email{rufus@math.hawaii.edu}

\thanks{The first author was partially supported by NSF grant DMS-1200475.
    The second author was partially supported by a grant from the
  Simons Foundation (\#245398).
  The third author was partially supported by NSF grant DMS-1229939.}

\begin{document}

\begin{abstract}
  We reformulate the Baum-Connes conjecture with
  coefficients by introducing a new crossed product functor for
  $C^*$-algebras.  All confirming examples for the original Baum-Connes conjecture remain confirming examples for the reformulated conjecture, and at present there are no known counterexamples to the reformulated conjecture.  Moreover, some of the known
  expander-based counterexamples to the original Baum-Connes
  conjecture become confirming examples for our reformulated
  conjecture.  
\end{abstract}

\maketitle

\section{Introduction}

For a second countable locally
compact group $G$, 
the Baum-Connes conjecture (with coefficients)
\cite{Baum:1994pr,Valette:2002zz} 
asserts that the Baum-Connes assembly map
\begin{equation}\label{oldass}
    K_*^{top}(G;A)\to K_*(A\rtimes_\red G)
\end{equation}
is an isomorphism for all $\gcstar$-algebras $A$.  Here the
$C^*$-algebra $A$ is equipped with a continuous action of $G$ by
$C^*$-algebra automorphisms and, as usual, $A\rtimes_\red G$ denotes the
reduced crossed product.  The conjecture has many deep
and important connections to geometry, topology, representation theory
and algebra.  It is known to be true for large classes of groups: see
for example \cite{Higson:2001eb,Chabert:2003bq,Lafforgue:2009ss}.

Work of Higson, Lafforgue and Skandalis \cite{Higson:2002la} has,
however, shown the conjecture to be false in the generality stated
above.  The counterexamples to the Baum-Connes conjecture they
discovered are closely connected to failures of \emph{exactness} in
the sense of Kirchberg and Wassermann (
\cite[Chapter~5]{Brown:2008qy}).  Recall that a locally compact group
$G$ is \emph{exact} if for every short exact sequence of
$\gcstar$-algebras
\begin{equation*}
  \xymatrix{ 0 \ar[r] & I \ar[r] & A \ar[r] & B \ar[r] & 0},
\end{equation*}
the corresponding sequence of reduced crossed products 
\begin{equation*}
\xymatrix{ 0 \ar[r] & I\rtimes_\red G \ar[r] & 
   A\rtimes_\red G \ar[r] & B\rtimes_\red G \ar[r] & 0}
\end{equation*}
is still exact.  All naturally occurring classes of
locally compact groups\footnote{Of course, what `naturally occuring' means is arguable!  However, we think this can be reasonably justified.}  are known to be exact.  For example, countable
linear groups \cite{Guentner:2005xr}, word hyperbolic groups
\cite{Roe:2005rt}, and connected groups \cite[Corollary
6.9(c)]{Connes:1976fj} are all exact.  Nonetheless, Gromov has indicated
\cite{Gromov:2003gf} how to construct non-exact `monster' groups.  (See
Arzhantseva and Delzant \cite{Arzhantseva:2008bv}, Coulon \cite{Coulon:2013fk}, and Osajda \cite{Osajda:2014ys} for detailed accounts of related constructions; the last of these is most relevant for this paper).  Higson, Lafforgue and Skandalis
\cite{Higson:2002la} used Gromov's groups to produce short exact
sequences of $G$-$C^*$-algebras such that the resulting sequence of
crossed products fails to be exact even on the level of $K$-theory.
This produces a counterexample to the Baum-Connes conjecture with
coefficients.

Furthermore, the Baum-Connes conjecture actually predicts that the
functor associating to a $\gcstar$-algebra $A$ the $K$-theory of the
reduced crossed product $A\rtimes_\red G$ 
should send short exact sequences of $\gcstar$-algebras to six-term
exact sequences of abelian groups.  Thus any examples where exactness
of the right-hand-side of the conjecture in line \eqref{oldass} fails
necessarily produce counterexamples; conversely, any attempt to
reformulate the conjecture must take exactness into account.  

Several results from the last five years show that some counterexamples can be obviated by using maximal completions, which are always exact.  The first progress along these lines was work of Oyono-Oyono and Yu \cite{Oyono-Oyono:2009ua} on the maximal coarse Baum-Connes conjecture for certain expanders.  Developing these ideas, Yu and the third author showed  \cite{Willett:2010ud,Willett:2010zh} that some of the counterexamples to the Baum-Connes conjecture coming from Gromov monster groups can be shown to be confirming examples if the maximal crossed product $A\rtimes_{\max}G$ is instead used to define the conjecture.  Subsequently, the geometric input underlying these results was clarified by Chen, Wang and Yu \cite{Chen:2012uq}, and the role of exactness, and also a-T-menability, in the main examples was made quite explicit by Finn-Sell and Wright \cite{Finn-Sell:2012fk}.   

All this work suggests that the maximal crossed product sometimes has \emph{better} properties than the reduced crossed product; however, there are well-known property (T) obstructions \cite{Higson:1998qd} to the Baum-Connes conjecture being true for the maximal crossed product in general.  The key idea of the  current work is to study crossed products that combine the good properties of the maximal and reduced crossed products.\\

In this paper we shall study $C^*$-algebra crossed products that
preserve short exact sequences.  The Baum-Connes conjecture also predicts that a crossed product takes equivariantly Morita equivalent $\gcstar$-algebras to Morita equivalent $\cstar$-algebras on the level of $K$-theory (this is true for the maximal and reduced crossed products, but not  in general).  We thus restrict attention to crossed products satisfying a \emph{Morita compatibilty} assumption that guarantees this.  

We shall show that a minimal exact and Morita compatible
crossed product exists, and we shall use it to reformulate the
Baum-Connes conjecture.  Denoting the minimal exact and Morita compatible
crossed product by $A\rtimes_{\mathcal{E}} G$ we propose that the natural
Baum-Connes assembly map
\begin{equation}\label{newass}
\mu:K_*^{top}(G;A)\to K_*(A\rtimes_{\mathcal{E}} G)
\end{equation}
is an isomorphism for any second countable locally compact group $G$
and any $\gcstar$-algebra $A$.   

This reformulation has the following four virtues:
\begin{enumerate}[(i)]
\item it agrees with the usual version of the conjecture for all exact
  groups and all a-T-menable groups;
\item the property (T) obstructions to surjectivity of the maximal Baum-Connes assembly map do not apply to it;
\item all known constructions of counterexamples to the usual version
  of the Baum-Connes conjecture (for groups, with coefficients) no
  longer apply;
\item there exist groups $G$ and $G$-$C^*$-algebras $A$ for which the old assembly map in line \eqref{oldass} fails to be surjective, but for which the reformulated assembly map in line \eqref{newass} is an isomorphism.
\end{enumerate}
Note that thanks to point (i) above, the reformulated assembly map is an
isomorphism, or injective, in all situations where the usual version
of the assembly map is known to have these properties. 

\subsection*{Acknowledgements}

We thank Goulnara Arzhantseva, Nate Brown, Alcides Buss, Siegfried Echterhoff, Nigel Higson, Eberhard Kirchberg, Ralf Meyer, Damian Osajda, John Quigg, and Dana Williams for illuminating discussions on aspects of this paper. The first author thanks the University of \Hawaii ~at \Manoa~
for the generous hospitality extended to him during his visits to the university.   The second and third authors thank the Erwin Schr\"{o}dinger Institute in Vienna for its support and hospitality during part of the work on this paper.  We would also like to thank the anonymous referee for many helpful comments.

\subsection*{Outline of the paper}

In Section \ref{bcstmt} we define what we mean by a general crossed product, and show that any such has an associated Baum-Connes assembly map.    In Section \ref{c*cross} we define exact and Morita compatible crossed products and show that there is a minimal crossed product with both of these properties.  In Section \ref{newbcsec} we show that the minimal exact and Morita compatible crossed product has a descent functor in $E$-theory, and use this to state our reformulation of the Baum-Connes conjecture.  In Section \ref{minpropsec} we show that the property (T) obstructions to the maximal Baum-Connes assembly map being an isomorphism do not apply to our new conjecture.  In Section \ref{bcproof} we show that our reformulated conjecture is true in the presence of a-T-menability of an action.  In Section \ref{hasec} we produce an
example where the new conjecture is true, but the old version of the
conjecture fails.  Finally, in Section \ref{countersec}, we collect
together some natural questions and remarks.   In Appendix \ref{cpapp} we discuss some examples of exotic crossed products: this material is not used in the main body of the paper, but is useful for background and motivation.

\section{Statement of the conjecture}\label{bcstmt}

Let $G$ be a second countable, locally compact group.  Let
$\cstar$ denote the category of $C^*$-algebras: an object in
this category is a $C^*$-algebra, and a morphism is a
$*$-homomorphism.  Let $\gcstar$ denote the category of
$G$-$C^*$-algebras: an object in this category is $C^*$-algebra
equipped with a continuous action of $G$ and a morphism is a
$G$-equivariant $*$-homomorphism.  

We shall be interested in \emph{crossed product functors} from
$\gcstar$ to $\cstar$.  The motivating examples are the usual maximal
and reduced crossed product functors
\begin{equation*}
  A\mapsto A \rtimes_{\max}G, \qquad A\mapsto A\rtimes_{\red}G.
\end{equation*}
Recall that the maximal crossed product is the completion of the
algebraic crossed product for the maximal norm.  Here, the algebraic
crossed product $A\rtimes_{\alg} G$ is the space of continuous
compactly supported functions from $G$ to $A$, equipped with the usual
twisted product and involution.  Similarly, $A\rtimes_{\red}G$ is the
completion of the algebraic crossed product for the reduced
norm. Further, the maximal norm dominates the reduced norm so that the
identity on $A\rtimes_{\alg}G$ extends to a (surjective)
$*$-homomorphism $A\rtimes_{\max}G \to A\rtimes_{\red}G$.  Together,
these $*$-homomorphisms comprise a natural transformation of functors.

With these examples in mind, we introduce the following definition.

\begin{definition}\label{cp}
A \emph{\textup{(}$C^*$-algebra\textup{)} crossed product} is a functor 
\begin{equation*}
    A\mapsto A\rtimes_\tau G  \quad:\quad\gcstar\to \cstar,
\end{equation*}
such that each $C^*$-algebra $A\rtimes_\tau G$ contains $A\rtimes_\alg G$ as a dense $*$-subalgebra, together with natural transformations
\begin{equation}\label{nattran}
   A \rtimes_{\max}G\to A \rtimes_\tau G \to A\rtimes_{\red} G
\end{equation}
which restrict to the identity on each $*$-subalgebra $A\rtimes_\alg G$.
\end{definition}

It follows that each $C^*$-algebra $A\rtimes_\tau G$ is a completion of the
algebraic crossed product for a norm which
necessarily satisfies
\begin{equation*}
  \| x \|_\red \leq  \| x \|_\tau \leq  \| x \|_\max
\end{equation*}
for every $x\in A\rtimes_{\alg} G$.  Note also that the $*$-homomorphism
$A\rtimes_\tau G \to B\rtimes_\tau G$ functorially induced by a
$G$-equivariant $*$-homomorphism $A\to B$ is necessarily the extension
by continuity of the standard $*$-homomorphism $A\rtimes_\alg G \to
B\rtimes_\alg G$.

In the appendix we shall see that there are in general many crossed
products other than the reduced and maximal ones.  Our immediate goal
is to formulate a version of the Baum-Connes conjecture for a general
crossed product.  For reasons involving descent (that will become
clear later), we shall formulate the
Baum-Connes conjecture in the language of $E$-theory, as in
\cite[Section~10]{Guentner:2000fj}.

We continue to let $G$ be a second countable, locally compact group
and consider the $\tau$-crossed product for $G$.  The
\emph{$\tau$-Baum-Connes assembly map} for $G$ with coefficients in the
$\gcstar$-algebra $A$ is the composition
\begin{equation}
\label{tauBC-v1}
  K_*^{top}(G;A)\to K_*(A\rtimes_\max G) \to K_*(A\rtimes_\tau G),
\end{equation}
in which the first map is the usual \emph{maximal} Baum-Connes
assembly map and the second is induced by the $*$-homomorphism
$A\rtimes_\max G \to A\rtimes_\tau G$.  The domain of assembly is
independent of the particular crossed product we are using.  It is the
\emph{topological $K$-theory of $G$ with coefficients in $A$}, defined
as the direct limit of equivariant $E$-theory groups
\begin{equation*}
  K_*^{top}(G;A)=
     \lim_{\stackrel{X\subseteq \underline{E}G}{\text{cocompact}}}E^G(C_0(X),A), 
\end{equation*}
where the direct limit is taken over cocompact subsets of
$\underline{E}G$, a universal space for proper $G$ actions.  The
(maximal) assembly map is itself a direct limit of assembly maps for
the individual cocompact subsets of $\underline{E}G$, each defined as
a composition
\begin{equation}
\label{ass}
\xymatrix{E^G(C_0(X),A)\ar[r] & 
   E(C_0(X)\rtimes_\max G,A\rtimes_\max G) \ar[r] 
          & E(\C,A\rtimes_\max G) },
\end{equation}
in which the first map is the $E$-theoretic 
\emph{\textup{(}maximal\textup{)} descent functor}, and the second map
is composition with the class of the \emph{basic projection} in
$C_0(X)\rtimes_\max G$, viewed as an element of
$E(\C,C_0(X)\rtimes_\max G)$.  Compatibility of the assembly maps for 
the various cocompact subsets of $\underline{E}G$ indexing the direct
limit follows from the uniqueness (up to homotopy) of the basic
projection.  For details see \cite[Section 10]{Guentner:2000fj}.

For the moment, we are interested in what validity of the
$\tau$-Baum-Connes conjecture -- the assertion that the
$\tau$-Baum-Connes assembly map is an isomorphism -- would predict
about the $\tau$-crossed product itself.  The first prediction is
concerned with \emph{exactness}.  Suppose
\begin{equation*}
  0 \to I \to A \to B \to 0
\end{equation*}
is a short exact sequence of $\gcstar$-algebras.  Exactness properties
of equivariant $E$-theory ensure that the sequence functorially
induced on the left hand side of assemly
\begin{equation*}
   K_*^{top}(G;I) \to K_*^{top}(G;A) \to K_*^{top}(G;B)
\end{equation*}
is exact in the middle.  (Precisely, this follows from the
corresponding fact for each cocompact subset of $\underline{E}G$ upon
passing to the limit.)  Now, the assembly map is itself functorial for
equivariant $*$-homomorphisms of the coefficient algebra.  As a
consequence, the functorially induced sequence on the right hand side
of assembly
\begin{equation*}
  K_*(I\rtimes_\tau G) \to K_*(A\rtimes_\tau G) \to K_*(B\rtimes_\tau G)
\end{equation*}
must be exact in the middle as well.

The second prediction concerns Morita invariance.  To
formulate it, let $H$ be the countably infinite direct sum
\begin{equation*}
 H=L^2(G)\oplus L^2(G)\oplus \cdots 
\end{equation*}
and denote the compact operators on $H$ by $\K_G$, which we consider
as a $\gcstar$-algebra in the natural way.  Similarly, for any $\gcstar$-algebra $A$, we consider the spatial tensor product $A\otimes \mathcal{K}_G$ as a $\gcstar$-algebra via the diagonal action.    Assume for simplicity that $A$ and $B$ are separable $\gcstar$-algebras.  Then $A$ and $B$ are said to be \emph{equivariantly Morita equivalent} if $A\otimes \mathcal{K}_G$ is equivariantly $*$-isomorphic to $B\otimes \mathcal{K}_G$: results of \cite{Curto:1984uq} and \cite{Mingo:1984fk} show that this is equivalent to other, perhaps more usual, definitions (compare \cite[Proposition 6.11 and Theorem 6.12]{Guentner:2000fj}).  If $A$ and $B$ are equivariantly Morita equivalent then $E^G(C,A)\cong E^G(C,B)$ for any $\gcstar$-algebra $C$ \cite[Theorem 6.12]{Guentner:2000fj}.  There is thus an isomorphism 
$$K_*^{top}(G;A) \cong K_*^{top}(G;B)$$
on the left hand side of assembly.  Assuming the $\tau$-Baum-Connes conjecture is valid for $G$ we must therefore also have an isomorphism
$$K_*(A\rtimes_\tau G) \cong K_*(B\rtimes_\tau G)$$
on the level of $K$-theory.


\section{Crossed product functors}\label{c*cross}

Motivated by the discussion in the previous section, we are led to
study crossed product functors that have good properties with respect to exactness and Morita equivalence.  The following two properties imply this `good behavior', and are particularly well-suited to our later needs.

Throughout this section, $G$ is a second countable, locally compact group.

\begin{definition}
\label{cpex}
The $\tau$-crossed product is \emph{exact} if for every short
exact sequence
\begin{equation*}
  \xymatrix{ 0 \ar[r] & A \ar[r] & B\ar[r] & C\ar[r]& 0}
\end{equation*}
of $G$-$C^*$-algebras the corresponding sequence of $C^*$-algebras
\begin{equation*}
  \xymatrix{ 0 \ar[r] & A\rtimes_\tau G \ar[r] & 
      B\rtimes_\tau G \ar[r] & C\rtimes_\tau G \ar[r]& 0}
\end{equation*}
is short exact.
\end{definition}

Whereas the maximal crossed
product functor is always exact in this sense (see Lemma \ref{exlem}), the reduced crossed product functor is (by definition)
exact precisely when $G$ is an exact group \cite[page 170]{Kirchberg:1999fy}.  Note that if the $\tau$-crossed product is exact, then the associated $K$-theory groups have the half exactness property predicted by the $\tau$-Baum-Connes conjecture and by half-exactness of $K$-theory.

For the second property, recall that $\mathcal{K}_G$ denotes the compact operators on the infinite sum Hilbert space $H=L^2(G)\oplus L^2(G) \oplus \dots$, considered as a $\gcstar$-algebra with the natural action.  Write $\Lambda$ for the action of $G$ on $H$.  Recall that for any $\gcstar$-algebra $A$, there are natural maps from $A$ and $G$ into the multiplier algebra $\mathcal{M}(A\rtimes_{\max}G)$, and we can identify $A$ and $G$ with their images under these maps.   This  gives rise to a covariant representation
$$
(\pi,u):(A\otimes \mathcal{K}_G,G)\to \mathcal{M}(A\rtimes_{\max}G)\otimes\mathcal{K}_G
$$
defined by $\pi(a\otimes T)=a\otimes T$ and $u_g=g\otimes \Lambda_g$.  The integrated form of this covariant pair
\begin{equation}\label{ut}
\Phi:(A\otimes \mathcal{K}_G)\rtimes_{\max}G \to (A\rtimes_{\max}G)\otimes \mathcal{K}_G
\end{equation}
is well-known to be a $*$-isomorphism, which we call the \emph{untwisting isomorphism}.  



\begin{definition}
\label{cput}
The $\tau$-crossed product is \emph{Morita compatible} if the
untwisting isomorphism descends to an isomorphism
$$\Phi:(A\otimes \mathcal{K}_G)\rtimes_{\tau}G \to (A\rtimes_{\tau}G)\otimes \mathcal{K}_G$$
of $\tau$-crossed products.
\end{definition}

Both the maximal and reduced crossed product functors are
Morita compatible: see Lemma \ref{unlem1} in the appendix.  Note that if $\rtimes_{\tau}$ is Morita compatible, then it takes equivariantly Morita equivalent (separable) $\gcstar$-algebras to Morita equivalent $\cstar$-algebras.   Indeed, in this case we have
$$
(A\rtimes_{\tau}G)\otimes \mathcal{K}_G\cong (A\otimes\mathcal{K}_G)\rtimes_{\tau}G\cong (B\otimes \mathcal{K}_G)\rtimes_{\tau}G\cong (B\rtimes_{\tau}G)\otimes \mathcal{K}_G,
$$
where the middle isomorphism is Morita equivalence, and the other two are Morita compatibility.  Thus if $\tau$ is Morita compatible, then the associated $K$-theory groups have the Morita invariance property predicted by the $\tau$-Baum-Connes conjecture.

Our goal for the remainder of the section is to show that there is a
`minimal' exact and Morita compatible crossed product.  To make sense of
this, we introduce a
partial ordering on the collection of crossed products for $G$:  the
$\sigma$-crossed product is \emph{smaller} than the $\tau$-crossed
product if the natural transformation in line \eqref{nattran} from the
$\tau$-crossed product to the reduced crossed product factors through
the $\sigma$-crossed product, meaning that there exists a diagram
\begin{equation*}
   A\rtimes_\tau G \to A\rtimes_\sigma G \to A\rtimes_\red G
\end{equation*}
for every $\gcstar$-algebra $A$ where the maps from $A\rtimes_\tau G$ and $A\rtimes_\sigma G$ to $A\rtimes_\red G$ are the ones coming from the definition of a crossed product functor.  Equivalently, for every 
$x\in A\rtimes_\alg G$ we have
\begin{equation*}
  \| x \|_\red \leq \| x \|_\sigma \leq \| x \|_\tau,
\end{equation*}
so that the identity on $A\rtimes_\alg G$ extends to a
$*$-homomorphism $A\rtimes_\tau G \to A\rtimes_\sigma G$.  In
particular, the order relation on crossed products is induced by the
obvious order relation on $C^*$-algebra norms on
$A\rtimes_{\alg}G$.\footnote{Incidentally, this observation gets us
  around the set-theoretic technicalities inherent when considering
  the `collection of all crossed products'.}  The maximal crossed
product is the maximal element for this ordering, and the reduced
crossed product is the minimal element.  

Recall that the \emph{spectrum} of a $C^*$-algebra $A$ is the set $\widehat{A}$ of equivalence classes of non-zero irreducible $*$-representations of $A$.  We will conflate a representation $\rho$ with the equivalence class it defines in $\widehat{A}$.   For an ideal $I$ in a $C^*$-algebra $A$, an irreducible representation of $A$ restricts to a (possibly zero) irreducible representation of $I$, and conversely irreducible representations of $I$ extend uniquely to irreducible representations of $A$.  It follows that $\widehat{I}$ identifies canonically with
$$
\{\rho\in \widehat{A}~|~\text{I}\not\subseteq \text{Kernel}(\rho)\}.
$$
Similarly, given a quotient $*$-homomorphism $\pi:A\to B$, the spectrum $\widehat{B}$ of $B$ identifies canonically with the collection
$$
\{\rho\in \widehat{A}~|~\text{Kernel}(\pi)\subseteq \text{Kernel}(\rho)\}
$$
of elements of $\widehat{A}$ that factor through $\pi$ via the correspondence $\widehat{B}\owns\rho \leftrightarrow \rho\circ \pi\in \widehat{A}$.  We will make these identifications in what follows without further comment; note that having done this, a short exact sequence
$$
\xymatrix{ 0 \ar[r] & I \ar[r] & A \ar[r] & B \ar[r] & 0}
$$
gives rise to a canonical decomposition $\widehat{A}=\widehat{I}\sqcup \widehat{B}$.

We record the following basic fact as a lemma as we will refer back to it several times: for a proof, see for example \cite[Theorem 2.7.3]{Dixmier:1977vl}.

\begin{lemma}\label{basic}
For any non-zero element of a $C^*$-algebra, there is an irreducible representation that is non-zero on that element. \qed
\end{lemma}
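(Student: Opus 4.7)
The plan is to pass from the element to a positive element, use Hahn–Banach to obtain a state detecting its norm, refine this to a pure state via Krein–Milman, and then invoke the GNS construction together with the fact that GNS representations of pure states are irreducible.

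In detail, let $a\in A$ be non-zero, so that $b := a^*a$ is a non-zero positive element with $\|b\|>0$. First I would produce a state $\phi_0$ on $A$ with $\phi_0(b)=\|b\|$. The quickest route is to work inside the commutative $C^*$-subalgebra $C^*(b)$ generated by $b$; on this subalgebra evaluation at the point in the spectrum where $|b|$ attains its maximum defines a state of norm $1$ sending $b$ to $\|b\|$, and Hahn–Banach extends this to a positive linear functional on $A$ of norm $1$, i.e.\ a state.

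Next, I would let $F \subseteq S(A)$ be the set of all states $\phi$ of $A$ satisfying $\phi(b)=\|b\|$. This set is non-empty, convex, and weak-$*$ closed in the weak-$*$ compact state space $S(A)$, hence weak-$*$ compact. Moreover $F$ is a face of $S(A)$: if $\phi=t\phi_1+(1-t)\phi_2$ lies in $F$ for $t\in(0,1)$ and $\phi_1,\phi_2\in S(A)$, then $\phi_i(b)\leq \|b\|$ forces $\phi_i(b)=\|b\|$, so $\phi_i\in F$. By the Krein–Milman theorem $F$ has an extreme point $\phi$, and since $F$ is a face of $S(A)$, such a $\phi$ is also extreme in $S(A)$, i.e.\ a pure state of $A$.

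Finally, apply the GNS construction to obtain a cyclic representation $(\pi_\phi,\mathcal{H}_\phi,\xi_\phi)$ with $\phi(x)=\langle \pi_\phi(x)\xi_\phi,\xi_\phi\rangle$. The standard fact (Gelfand–Naimark–Segal) that purity of $\phi$ is equivalent to irreducibility of $\pi_\phi$ then gives an irreducible representation. Since
\[
\|\pi_\phi(a)\xi_\phi\|^2 = \langle \pi_\phi(a^*a)\xi_\phi,\xi_\phi\rangle = \phi(b) = \|b\| > 0,
\]
we conclude $\pi_\phi(a)\neq 0$, as required. The only non-routine step is the purity $\Leftrightarrow$ irreducibility equivalence, but this is a foundational result and will be cited rather than reproved; everything else is a direct application of Hahn–Banach and Krein–Milman.
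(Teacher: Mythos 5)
Your proof is correct and is precisely the standard argument behind the reference the paper cites for this lemma (the paper gives no proof of its own, pointing instead to Dixmier, Theorem 2.7.3): pass to $a^*a$, extend a norm-attaining character of $C^*(a^*a)$ to a state, extract a pure state by Krein--Milman, and apply GNS. The one small caveat is that for non-unital $A$ (and the crossed products in this paper are typically non-unital) the state space $S(A)$ is not weak-$*$ compact; but your set $F$ is a weak-$*$ closed subset of the unit ball of $A^*$, hence compact in its own right, and it is a face of the quasi-state space, so Krein--Milman applied to $F$ still yields a pure state and the rest of the argument goes through unchanged.
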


The next lemma is the last general fact we need about spectra.

\begin{lemma}\label{genfunc}
Consider a diagram of $C^*$-algebras
$$
\xymatrix{ A_1 \ar[d]^{\pi_1} \ar[r]^\phi & A_2 \ar[d]^{\pi_2} \\ B_1 \ar@{-->}[r]^{\psi} & B_2 }
$$
where $\phi$ is a $*$-homomorphism, and $\pi_1$ and $\pi_2$ are surjective $*$-homomorphisms. For each $\rho\in \widehat{A_2}$, define 
$$
\phi^*\rho:=\{\rho'\in \widehat{A_1}~|~\text{Kernel}(\rho\circ \phi)\subseteq \text{Kernel}(\rho')\}.
$$
Then there exists a $*$-homomorphism $\psi:B_1\to B_2$ making the diagram commute if and only if $\phi^*\rho$ is a subset of $\widehat{B_1}$ for all $\rho$ in $\widehat{B_2}$ (where $\widehat{B_2}$ is considered as a subset of $\widehat{A_2}$).
\end{lemma}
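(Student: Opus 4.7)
The plan is to reduce everything to a statement about ideals, and then translate between ideals and subsets of spectra using the identifications already described in the paper together with Lemma \ref{basic}.

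First I would reformulate the algebraic side. Since $\pi_1$ is surjective, a $*$-homomorphism $\psi$ making the diagram commute exists if and only if $\pi_2\circ\phi$ vanishes on $\ker\pi_1$, i.e.\ if and only if $\phi(\ker\pi_1)\subseteq\ker\pi_2$; when it exists it is unique. Next I would unpack the spectral condition. Under the identification $\widehat{B_2}\hookrightarrow\widehat{A_2}$, an element $\rho\in\widehat{A_2}$ lies in $\widehat{B_2}$ exactly when $\ker\pi_2\subseteq\ker\rho$, and similarly $\rho'\in\widehat{A_1}$ lies in $\widehat{B_1}$ exactly when $\ker\pi_1\subseteq\ker\rho'$. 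Note also that the definition of $\phi^*\rho$ only depends on the kernel $J_\rho:=\ker(\rho\circ\phi)\trianglelefteq A_1$: it is the set of irreducible representations of $A_1$ factoring through the quotient $A_1/J_\rho$.

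For the forward implication, assume $\phi(\ker\pi_1)\subseteq\ker\pi_2$ and fix $\rho\in\widehat{B_2}$, so $\ker\pi_2\subseteq\ker\rho$. Then for any $a\in\ker\pi_1$ one has $\phi(a)\in\ker\pi_2\subseteq\ker\rho$, hence $\ker\pi_1\subseteq J_\rho=\ker(\rho\circ\phi)$. Any $\rho'\in\phi^*\rho$ satisfies $J_\rho\subseteq\ker\rho'$, and so $\ker\pi_1\subseteq\ker\rho'$, i.e.\ $\rho'\in\widehat{B_1}$, as required.

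For the converse I would argue by contrapositive. Suppose $\phi(\ker\pi_1)\not\subseteq\ker\pi_2$, so there exists $a\in\ker\pi_1$ with $\pi_2(\phi(a))\neq 0$ in $B_2$. By Lemma \ref{basic} applied to $B_2$, pick an irreducible $\sigma\in\widehat{B_2}$ with $\sigma(\pi_2(\phi(a)))\neq 0$, and set $\rho:=\sigma\circ\pi_2\in\widehat{A_2}$, which is the corresponding element of $\widehat{B_2}\subseteq\widehat{A_2}$. Then $\rho(\phi(a))\neq 0$, so the image of $a$ in the quotient $A_1/J_\rho$ is nonzero. Applying Lemma \ref{basic} again, choose an irreducible representation of $A_1/J_\rho$ that is nonzero on this image, and let $\rho'\in\widehat{A_1}$ be the corresponding irreducible representation of $A_1$. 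Then $J_\rho\subseteq\ker\rho'$, so $\rho'\in\phi^*\rho$, but $\rho'(a)\neq 0$ while $a\in\ker\pi_1$, so $\ker\pi_1\not\subseteq\ker\rho'$ and thus $\rho'\notin\widehat{B_1}$. This contradicts the hypothesis and completes the proof. The only slightly delicate point is this second application of Lemma \ref{basic} to extract a witnessing irreducible $\rho'$ factoring through $J_\rho$; everything else is kernel chasing.
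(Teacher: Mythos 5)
Your proof is correct and follows essentially the same route as the paper's: the forward direction is the same kernel-chasing computation, and the converse is the same contrapositive argument, with your two applications of Lemma \ref{basic} (to $B_2$ and then to the quotient $A_1/J_\rho$) matching the paper's applications to $B_2$ and to the image $C=\rho(\pi_2(\phi(A_1)))$, which is canonically isomorphic to $A_1/J_\rho$. The delicate point you flag — that the witnessing irreducible representation of $A_1/J_\rho$ pulls back to an irreducible representation of $A_1$ containing $J_\rho$ in its kernel — is exactly the identification of spectra of quotients already set up in the paper, so there is no gap.
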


\begin{proof}
Assume first that $\psi$ exists.  Let $\rho$ be an element of $\widehat{B_2}$, and $\rho\circ \pi_2$ the corresponding element of $\widehat{A_2}$.  Then 
\begin{align*}
\phi^*(\rho\circ \pi_2) &=\{\rho'\in \widehat{A_1}~|~\text{Kernel}(\rho\circ \pi_2\circ \phi)\subseteq \text{Kernel}(\rho')\} \\
& =\{\rho'\in \widehat{A_1}~|~\text{Kernel}(\rho\circ \psi\circ \pi_1)\subseteq \text{Kernel}(\rho')\} \\
& \subseteq \{\rho'\in \widehat{A_1}~|~\text{Kernel}(\pi_1)\subseteq \text{Kernel}(\rho')\} \\
& =\widehat{B_1}.
\end{align*}

Conversely, assume that no such $\psi$ exists.  Then the kernel of $\pi_1$ is not a subset of the kernel of $\pi_2\circ \phi$, so there exists $a\in A_1$ such that $\pi_1(a)=0$, but $\pi_2(\phi(a))\neq 0$.  Lemma \ref{basic} implies that there exists $\rho\in \widehat{B_2}$ such that $\rho(\pi_2(\phi(a)))\neq 0$.  Write $C=\rho(\pi_2(\phi(A_1)))$ and $c=\rho(\pi_2(\phi(a))))$.  Then Lemma \ref{basic} again implies that there exists $\rho''\in \widehat{C}$ such that $\rho''(c)\neq 0$.  Let $\rho'=\rho''\circ \rho\circ \pi_2\circ \phi$, an element of $\widehat{A_1}$.  Then 
\begin{equation}\label{containment}
\text{Kernel}(\rho\circ \pi_2\circ \phi)\subseteq \text{Kernel}(\rho')
\end{equation}
and $\rho'(a)\neq 0$.  Line \eqref{containment} implies that $\rho'$ is in $\phi^*\rho$, while the fact that $\rho'(a)\neq 0$ and $\pi_1(a)=0$ implies that $\rho'$ is not in the subset $\widehat{B_1}$ of $\widehat{A_1}$.  Hence $\phi^*\rho\not\subseteq \widehat{B_1}$ as required.
\end{proof}

We now turn back to crossed products.  Let $A$ be a $\gcstar$-algebra and $\sigma$ a crossed product.  Let $S_\sigma(A)$ denote the subset of $\widehat{A\rtimes_{\max}G}$ consisting of representations of $A\rtimes_{\max}G$ that factor through the quotient $A\rtimes_\sigma G$; in other words, $S_\sigma(A)$ is the subset of $A\rtimes_{\max}G$ that identifies naturally with $\widehat{A\rtimes_\sigma G}$.  In particular, $S_{\max}(A)$ denotes $\widehat{A\rtimes_{\max}G}$ and $S_{\red}(A)$ denotes $\widehat{A\rtimes_{\red}G}$, considered as a subset of $S_{\max}(A)$.

We will first characterize exactness in terms of the sets above.  Let
$$
\xymatrix{0 \ar[r] & I \ar[r] & A\ar[r] & B \ar[r] & 0}
$$
be a short exact sequence of $\gcstar$-algebras.  If $\sigma$ is a crossed product, consider the corresponding commutative diagram
\begin{equation}\label{ses}
\xymatrix{0 \ar[r] & I\rtimes_{\max} G \ar[r] \ar[d]^{\pi_I} & A\rtimes_{\max} G\ar[r] \ar[d]^{\pi_A} & B\rtimes_{\max} G \ar[d]^{\pi_B}\ar[r] & 0 \\
0 \ar[r] & I\rtimes_\sigma G \ar[r]^\iota & A\rtimes_\sigma G\ar[r]^\pi & B\rtimes_\sigma G \ar[r] & 0}
\end{equation}
with exact top row.   Note that the bottom row need not be exact, but we do have that the map $\pi$ is surjective (by commutativity of the right-hand square and surjectivity of $\pi_B$), and that the kernel of $\pi$ contains the image of $\iota$ (as $\sigma$ is a functor).

We make the following identifications:
\begin{enumerate}[(i)]
\item $S_\sigma(A)$ is by definition a subset of $S_{\max}(A)$;
\item $S_{\max}(I)$ and $S_{\max}(B)$ identify canonically with subsets of $S_{\max}(A)$ as $I\rtimes_{\max}G$ and $B\rtimes_{\max} G$ are respectively an ideal and a quotient of $A\rtimes_{\max}G$;
\item $S_\sigma(I)$ and $S_\sigma(B)$ are by definition subsets of $S_{\max}(I)$ and $S_{\max}(B)$ respectively, and thus identify with subsets of $S_{\max}(A)$ by the identifications in the previous point.
\end{enumerate}

\begin{lemma}\label{exact}
Having made the identifications above, the following conditions govern exactness of the bottom line in diagram \eqref{ses}.
\begin{enumerate}[(i)]
\item The map $\iota$ in line \eqref{ses} above is injective if and only if 
$$
S_{\max}(I)\cap S_\sigma(A)=S_\sigma(I).
$$
\item The kernel of $\pi$ is equal to the image of $\iota$ in line \eqref{ses} above if and only if 
$$
S_{\max}(B)\cap S_\sigma(A)=S_\sigma(B).
$$
\end{enumerate}
\end{lemma}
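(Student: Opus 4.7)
The plan is to translate each condition on the bottom row of \eqref{ses} into an equality of ideals of $A\rtimes_{\max}G$, and then to detect such an equality via irreducible representations using Lemma~\ref{basic}.

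For part~(i), the first step is to rewrite $S_{\max}(I)\cap S_\sigma(A)$ as the set of irreducible $\rho\in\widehat{A\rtimes_{\max}G}$ with $\rho|_{I\rtimes_{\max}G}\neq 0$ and $\ker(\pi_A)\subseteq\ker(\rho)$, and $S_\sigma(I)$ as the analogous set in which $\ker(\pi_A)$ is replaced by $\ker(\pi_I)$. Commutativity of \eqref{ses} forces $\ker(\pi_I)\subseteq\ker(\pi_A)\cap(I\rtimes_{\max}G)$ in all cases, so the inclusion $S_{\max}(I)\cap S_\sigma(A)\subseteq S_\sigma(I)$ is automatic. For the reverse, assume $\iota$ is injective, so that $\iota(I\rtimes_\sigma G)$ is a closed ideal of $A\rtimes_\sigma G$ which is $*$-isomorphic to $I\rtimes_\sigma G$ via $\iota$. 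Given $\rho\in S_\sigma(I)$, factor $\rho|_{I\rtimes_{\max}G}=\tilde\rho\circ\pi_I$ for some irreducible representation $\tilde\rho$ of $I\rtimes_\sigma G$, extend $\tilde\rho\circ\iota^{-1}$ to an irreducible representation $\rho'$ of $A\rtimes_\sigma G$, and observe that $\rho'\circ\pi_A$ and $\rho$ are both irreducible representations of $A\rtimes_{\max}G$ restricting to the non-zero representation $\tilde\rho\circ\pi_I$ of the ideal $I\rtimes_{\max}G$. Uniqueness of extension from an ideal forces them to be unitarily equivalent, so $\rho\in S_\sigma(A)$. Conversely, if $\iota$ fails to be injective, pick $x\in(I\rtimes_{\max}G)\cap\ker(\pi_A)$ with $\pi_I(x)\neq 0$; Lemma~\ref{basic} produces an irreducible representation of $I\rtimes_\sigma G$ non-zero on $\pi_I(x)$, whose composition with $\pi_I$ then extends to an element of $S_\sigma(I)\setminus S_\sigma(A)$.

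For part~(ii), let $q_{\max}:A\rtimes_{\max}G\to B\rtimes_{\max}G$ denote the quotient and set
\[
J_1:=I\rtimes_{\max}G+\ker(\pi_A),\qquad J_2:=q_{\max}^{-1}(\ker(\pi_B)).
\]
Then $S_{\max}(B)\cap S_\sigma(A)=\{\rho:J_1\subseteq\ker(\rho)\}$ and $S_\sigma(B)=\{\rho:J_2\subseteq\ker(\rho)\}$, with $J_1\subseteq J_2$ always. Since images of $*$-homomorphisms between $C^*$-algebras are closed, $\pi_A(I\rtimes_{\max}G)=\iota(I\rtimes_\sigma G)$ is closed, whence $J_1=\pi_A^{-1}(\iota(I\rtimes_\sigma G))$; the relation $\pi\circ\pi_A=\pi_B\circ q_{\max}$ together with surjectivity of $\pi_A$ gives similarly $J_2=\pi_A^{-1}(\ker(\pi))$. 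Surjectivity of $\pi_A$ therefore yields $J_1=J_2$ if and only if the image of $\iota$ equals the kernel of $\pi$. Finally, if $J_1\subsetneq J_2$, then Lemma~\ref{basic} applied to a non-zero element of the image of $J_2$ in $(A\rtimes_{\max}G)/J_1$ produces an irreducible representation of this quotient that kills $J_1$ but not $J_2$, so the two sets of representations coincide precisely when the two ideals do. The main technical care is required in the forward direction of part~(i), where the key input is the classical fact that an irreducible representation of a $C^*$-algebra is determined up to unitary equivalence by its restriction to any nontrivially represented ideal; everything else reduces to routine bookkeeping with ideals and the quotient maps in \eqref{ses}.
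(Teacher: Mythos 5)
Your proof is correct, and while it runs on the same fuel as the paper's -- detecting ideal containments in $A\rtimes_{\max}G$ via irreducible representations, i.e.\ Lemma \ref{basic} -- the implementation is genuinely different in both parts. For (i), the paper computes the spectrum of the ideal $\iota(I\rtimes_\sigma G)\trianglelefteq A\rtimes_\sigma G$ in one stroke and identifies it with $S_{\max}(I)\cap S_\sigma(A)$, after which injectivity of $\iota$ is equivalent to this spectrum exhausting $S_\sigma(I)$; you instead split the equality into two inclusions, observe one is automatic from $\ker(\pi_I)\subseteq\ker(\pi_A)$, and prove the other using the unique-extension property of irreducible representations of ideals as the explicit key input (a fact the paper records in its preamble on spectra, so this is fair game). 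For (ii), the divergence is larger: the paper's argument leans on part (i) together with the disjoint decomposition $S_{\max}(A)=S_{\max}(I)\sqcup S_{\max}(B)$ coming from exactness of the top row, whereas you bypass spectra of the $\sigma$-completions entirely and reduce the statement to the equality of two concrete closed ideals $J_1=\pi_A^{-1}(\operatorname{im}\iota)$ and $J_2=\pi_A^{-1}(\ker\pi)$ of $A\rtimes_{\max}G$, with Lemma \ref{basic} invoked only once at the end to separate them when they differ. Your version of (ii) is self-contained and arguably more transparent, since the logical equivalence ``sets of representations agree iff ideals agree iff $\ker\pi=\operatorname{im}\iota$'' is laid bare; the paper's version buys a uniform picture in which all three sets $S_\sigma(I)$, $S_\sigma(A)$, $S_\sigma(B)$ live inside the single space $S_{\max}(A)$, which is the picture it then reuses in Lemma \ref{infimumu} and Theorem \ref{mc} to pass the conditions to intersections. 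Both are complete proofs.
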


\begin{proof}
For (i), as $\iota(I\rtimes_\sigma G)$ is an ideal in $A\rtimes_\sigma G$, we may identify its spectrum with a subset of $S_\sigma(A)$, and thus also of $S_{\max}(A)$.  
Commutativity of line \eqref{ses} identifies the spectrum of $\iota(I\rtimes_\sigma G)$ with
\begin{align*}
&\{\rho\in S_{\max}(A)~|~\text{Kernel}(\pi_A)\subseteq \text{Kernel}(\rho) \text{ and } \rho(I\rtimes_{\max}G)\neq \{0\}\} 
\\ &=S_{\max}(I)\cap S_{\sigma}(A).
\end{align*}
Lemma \ref{basic} implies the map $\iota$ is injective if and only if the spectrum of $\iota(I\rtimes_\sigma G)$ and $S_\sigma(I)$ are the same as subsets of $S_{\max}(A)$, so we are done.

For (ii), surjectivity of $\pi$ canonically identifies $S_{\sigma}(B)$ with a subset of $S_{\sigma}(A)$.  Part (i) and the fact that the image of $\iota$ is contained in the kernel of $\pi$ imply that $S_\sigma(B)$ is disjoint from $S_{\max}(I)\cap S_\sigma(A)$ as subsets of $S_\sigma(A)$.  Hence the kernel of $\pi$ equals the image of $\iota$ if and only if 
$$
S_{\sigma}(A)=S_{\sigma}(B)\cup (S_{\max}(I)\cap S_{\sigma}(A)),
$$
or equivalently, if and only if
\begin{equation}\label{ontheway}
S_{\sigma}(B)=S_{\sigma}(A)\setminus S_{\max}(I).
\end{equation}
As the top line of diagram \eqref{ses} is exact, $S_{\max}(A)$ is equal to the disjoint union of $S_{\max}(I)$ and $S_{\max}(B)$, whence $S_{\sigma}(A)\setminus S_{\max}(I)=S_{\sigma}(A)\cap S_{\max}(B)$; the conclusion follows on combining this with the condition in line \eqref{ontheway}.
\end{proof}

We now characterize Morita compatibility.  Recall that there is a canonical `untwisting' $*$-isomorphism
\begin{equation}\label{untwist}
\Phi:(A\otimes\mathcal{K}_G)\rtimes_{\max} G\to (A\rtimes_{\max}G)\otimes\mathcal{K}_G,
\end{equation}
and that a crossed product $\sigma$ is Morita compatible if this descends to an $*$-isomorphism 
$$
(A\otimes\mathcal{K}_G)\rtimes_{\sigma} G\cong (A\rtimes_{\sigma}G)\otimes\mathcal{K}_G.
$$
The following lemma is immediate from the fact that the spectrum of the right-hand-side in line \eqref{untwist} identifies canonically with $S_{\max}(A)$.

\begin{lemma}\label{mcglb}
A crossed product $\sigma$ is Morita compatible if and only if the bijection
$$
\widehat{\Phi}:S_{\max}(A\otimes\mathcal{K}_G)\to S_{\max}(A)
$$
induced by $\Phi$ takes $S_\sigma(A\otimes\mathcal{K}_G)$ onto $S_\sigma(A)$.  \qed
\end{lemma}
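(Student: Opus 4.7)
The plan is to reduce this lemma to a direct application of Lemma \ref{genfunc}, together with the elementary observation that tensoring with $\mathcal{K}_G$ does not alter the spectrum. I would apply Lemma \ref{genfunc} (in both directions) to the commutative square
$$
\xymatrix{(A \otimes \mathcal{K}_G)\rtimes_{\max}G \ar[d] \ar[r]^-{\Phi} & (A\rtimes_{\max}G)\otimes \mathcal{K}_G \ar[d] \\ (A\otimes \mathcal{K}_G)\rtimes_\sigma G \ar@{-->}[r] & (A\rtimes_\sigma G)\otimes \mathcal{K}_G ,}
$$
in which the vertical arrows are the canonical quotient maps and the dashed map is the one whose existence (together with that of its analogue for $\Phi^{-1}$) is the content of Morita compatibility.

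Before invoking Lemma \ref{genfunc} I would record the identification $\widehat{B \otimes \mathcal{K}_G} \cong \widehat{B}$ valid for any $C^*$-algebra $B$, which follows from $\mathcal{K}_G$ being elementary with a single unitary equivalence class of irreducible representations: every irreducible representation of $B \otimes \mathcal{K}_G$ is the exterior tensor product of one of $B$ with the standard representation of $\mathcal{K}_G$ on $H$. Specializing to $B = A\rtimes_\tau G$ for $\tau \in \{\max, \sigma\}$ identifies $\widehat{(A\rtimes_\tau G)\otimes \mathcal{K}_G}$ with $S_\tau(A)$; with this identification in place, $\widehat{\Phi}$ is exactly the spectrum bijection induced by $\Phi$ in the lemma statement, and the subset $\widehat{(A\rtimes_\sigma G)\otimes \mathcal{K}_G}$ of $\widehat{(A\rtimes_{\max}G)\otimes \mathcal{K}_G}$ becomes $S_\sigma(A)$ inside $S_{\max}(A)$.

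The application of Lemma \ref{genfunc} is then mechanical. Because $\Phi$ is already an isomorphism, for any $\rho$ in the spectrum of the top-right corner, $\Phi^*\rho$ is the singleton $\{\widehat{\Phi}^{-1}(\rho)\}$. Lemma \ref{genfunc} therefore says the dashed map exists precisely when $\widehat{\Phi}^{-1}(S_\sigma(A)) \subseteq S_\sigma(A\otimes \mathcal{K}_G)$, and applying the same lemma with the roles of $\Phi$ and $\Phi^{-1}$ reversed yields the opposite inclusion under the hypothesis that the corresponding dashed map in the other direction exists; the two dashed maps are then automatically mutually inverse $*$-isomorphisms because they both restrict to the identity on the dense $*$-subalgebra $(A\otimes \mathcal{K}_G)\rtimes_\alg G$. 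Combining the two inclusions gives that Morita compatibility is equivalent to $\widehat{\Phi}$ carrying $S_\sigma(A\otimes\mathcal{K}_G)$ onto $S_\sigma(A)$, which is the statement of the lemma. The only mild subtlety to watch for is keeping these spectrum identifications straight on both sides of $\Phi$; once they are in place, the argument is a formal consequence of Lemma \ref{genfunc} applied symmetrically.
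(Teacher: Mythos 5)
Your argument is correct and is essentially the paper's: the paper declares the lemma immediate from the canonical identification $\widehat{(A\rtimes_{\max}G)\otimes\mathcal{K}_G}\cong S_{\max}(A)$, and you have simply supplied the routine details by running Lemma \ref{genfunc} symmetrically over the untwisting square. One small imprecision worth noting: since $\Phi$ is an isomorphism, $\Phi^*\rho$ is the hull of $\mathrm{Kernel}(\rho\circ\Phi)$, i.e.\ the closure of the singleton $\{\rho\circ\Phi\}$ in the Jacobson topology rather than the singleton itself, but this is harmless because $S_\sigma(A\otimes\mathcal{K}_G)$ is cut out by a kernel containment and therefore contains the point if and only if it contains its closure.
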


The following lemma is the final step in constructing a minimal exact and Morita compatible crossed product.

\begin{lemma}\label{infimumu}
Let $\Sigma$ be a family of crossed products.  Then there is a unique crossed product $\tau$ such that for any $\gcstar$-algebra $A$, 
$$
S_\tau(A)=\bigcap_{\sigma\in \Sigma} S_\sigma(A).
$$
\end{lemma}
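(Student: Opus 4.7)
The plan is to build $\tau$ by a direct construction on each $\gcstar$-algebra $A$ and then verify functoriality via Lemma \ref{genfunc}, which is where the main work lies.

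First I would define $A \rtimes_\tau G$ concretely as the quotient
\[
A \rtimes_\tau G \ := \ (A \rtimes_{\max} G) \Big/ \bigcap_{\sigma \in \Sigma} \ker\bigl(A \rtimes_{\max} G \to A \rtimes_\sigma G\bigr),
\]
equivalently, as the image of the product $*$-homomorphism $A \rtimes_{\max} G \to \prod_{\sigma \in \Sigma} A \rtimes_\sigma G$. By construction, the irreducible representations of $A \rtimes_{\max}G$ that descend to this quotient are precisely those $\rho$ whose kernel contains the intersection of the kernels; by Lemma \ref{basic} this is exactly $\bigcap_{\sigma \in \Sigma} S_\sigma(A)$, giving the required identification of $S_\tau(A)$. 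The natural map $A \rtimes_{\max} G \to A \rtimes_\tau G$ is the quotient map. Since each $\sigma$-crossed product admits a natural transformation to the reduced one, $S_\red(A) \subseteq S_\sigma(A)$ for every $\sigma$, hence $S_\red(A) \subseteq S_\tau(A)$, and this yields the factorization $A \rtimes_\tau G \to A \rtimes_\red G$. The composition $A \rtimes_\alg G \to A \rtimes_\tau G \to A \rtimes_\red G$ is the standard injection, so $A \rtimes_\alg G$ embeds as a $*$-subalgebra; density is inherited from $A \rtimes_{\max} G$.

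The main obstacle is functoriality: given a $G$-equivariant $*$-homo\-morphism $\phi \colon A \to B$, one must produce $A \rtimes_\tau G \to B \rtimes_\tau G$ extending $\phi \rtimes_\alg G$. I would apply Lemma \ref{genfunc} to the square
\[
\xymatrix{A \rtimes_{\max} G \ar[r]^{\phi \rtimes_{\max} G} \ar[d] & B \rtimes_{\max} G \ar[d] \\ A \rtimes_\tau G \ar@{-->}[r] & B \rtimes_\tau G,}
\]
whose existence reduces to showing $(\phi \rtimes_{\max} G)^*\rho \subseteq S_\tau(A)$ for every $\rho \in S_\tau(B)$. For each individual $\sigma \in \Sigma$, functoriality of $\rtimes_\sigma$ together with Lemma \ref{genfunc} gives $(\phi \rtimes_{\max} G)^*\rho' \subseteq S_\sigma(A)$ for all $\rho' \in S_\sigma(B)$. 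Since $S_\tau(B) \subseteq S_\sigma(B)$, this conclusion applies to our $\rho$, and intersecting over $\sigma \in \Sigma$ yields $(\phi \rtimes_{\max} G)^*\rho \subseteq \bigcap_\sigma S_\sigma(A) = S_\tau(A)$ as required. Naturality of $\rtimes_{\max} G \to \rtimes_\tau G$ and $\rtimes_\tau G \to \rtimes_\red G$ is then automatic, since both can be checked after composing with the quotient from $A \rtimes_{\max} G$, where they reduce to the naturality of the maximal--reduced transformation factored through the (functorial) component maps to each $A \rtimes_\sigma G$.

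For uniqueness, suppose $\tau'$ is another crossed product with $S_{\tau'}(A) = \bigcap_\sigma S_\sigma(A)$ for all $A$. Then the quotient map $A \rtimes_{\max} G \to A \rtimes_{\tau'} G$ has the same kernel as $A \rtimes_{\max} G \to A \rtimes_\tau G$ (by Lemma \ref{basic}, the kernel is determined by the irreducible representations that vanish on it), so the identity on $A \rtimes_\alg G$ extends to a canonical $*$-isomorphism $A \rtimes_\tau G \cong A \rtimes_{\tau'} G$, and these isomorphisms are visibly natural in $A$.
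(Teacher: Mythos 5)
There is a genuine error at the very first step: your concrete model of $A\rtimes_\tau G$ is the wrong quotient, and it builds the \emph{least upper bound} of the family $\Sigma$ rather than the required \emph{greatest lower bound}. Writing $I_\sigma$ for the kernel of $A\rtimes_{\max}G\to A\rtimes_\sigma G$, you quotient by $\bigcap_{\sigma}I_\sigma$ (equivalently, take the image in $\prod_\sigma A\rtimes_\sigma G$). An irreducible representation $\rho$ descends to that quotient precisely when $\bigcap_\sigma I_\sigma\subseteq \mathrm{Kernel}(\rho)$, and this is a \emph{weaker} condition than ``$I_\sigma\subseteq\mathrm{Kernel}(\rho)$ for all $\sigma$'': if $\rho$ kills even one $I_{\sigma_0}$ then it automatically kills $\bigcap_\sigma I_\sigma$. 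So the spectrum of your quotient is the hull of $\bigcap_\sigma I_\sigma$, which by Lemma \ref{basic} is the closure of $\bigcup_\sigma S_\sigma(A)$, not $\bigcap_\sigma S_\sigma(A)$. A sanity check makes this concrete: take $G$ non-amenable, $A=\C$, and $\Sigma=\{\max,\red\}$. Then $I_{\max}=0$, so $\bigcap_\sigma I_\sigma=0$ and your quotient is $C^*_{\max}(G)$ with spectrum $S_{\max}(\C)=\widehat{G}$, whereas $\bigcap_\sigma S_\sigma(\C)=\widehat{G}_r$ is a proper subset. The correct construction, which is the one the paper uses, is to quotient by the \emph{smallest ideal $I$ containing every $I_\sigma$} (the closed ideal generated by $\bigcup_\sigma I_\sigma$): since $\mathrm{Kernel}(\rho)$ is an ideal, it contains $I$ if and only if it contains every $I_\sigma$, which gives $S_\tau(A)=\bigcap_\sigma S_\sigma(A)$ on the nose; and since each $I_\sigma\subseteq I_\red$, one still has $I\subseteq I_\red$, so the quotient sits between the maximal and reduced completions and restricts to the identity on $A\rtimes_\alg G$.

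The remainder of your argument is sound and is essentially the paper's: the uniqueness claim via Lemma \ref{basic} (the completion is determined by which irreducibles descend), and the functoriality argument, which applies Lemma \ref{genfunc} to each $\sigma\in\Sigma$ separately to get $(\phi\rtimes G)^*\rho\subseteq S_\sigma(A_1)$ for $\rho\in S_\sigma(A_2)$, intersects over $\sigma$, and then applies Lemma \ref{genfunc} in the other direction. Both of these go through verbatim once the definition of $I$ is corrected, because they only use the characterization $S_\tau(A)=\bigcap_\sigma S_\sigma(A)$ and not the erroneous model.
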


\begin{proof}
For each $\sigma\in \Sigma$, let $I_\sigma$ denote the kernel of the canonical quotient map $A\rtimes_{\max}G \to A\rtimes_\sigma G$, and similarly for $I_\red$.  Note that $I_\red$ contains all the ideals $I_\sigma$.  Let $I$ denote the smallest ideal of $A\rtimes_{\max} G$ containing $I_\sigma$ for all $\sigma\in \Sigma$. Define 
$$
A\rtimes_\tau G:=(A\rtimes_{\max} G) / I;
$$
as $I$ is contained in $I_\red$, this is a completion of $A\rtimes_\alg G$ that sits between the maximal and reduced completions.  The spectrum of $A\rtimes_\tau G$ is 
$$
S_\tau(A)=\{\rho\in S_{\max}(A)~|~I\subseteq \text{Kernel}(\rho)\}.
$$
Lemma \ref{basic} implies that this is equal to
$$
\{\rho\in S_{\max}(A)~|~I_\sigma\subseteq \text{Kernel}(\rho) \text{ for all } \sigma\in \Sigma\}=\bigcap_{\sigma\in \Sigma}S_{\sigma}(A)
$$
as claimed.  Uniqueness of the completion $A\rtimes_\tau G$ follows from Lemma \ref{basic} again.

Finally, we must check that $\rtimes_\tau$ defines a functor on $\gcstar$: if $\phi:A_1\to A_2$ is an equivariant $*$-homomorphism, we must show that the dashed arrow in the diagram
$$
\xymatrix{ A_1\rtimes_{\max}G \ar[d] \ar[r]^{\phi\rtimes G} & A_2\rtimes_{\max}G \ar[d] \\ A_1\rtimes_\tau G \ar@{-->}[r] & A_2\rtimes_\tau G }
$$
can be filled in.  Fix $\sigma\in \Sigma$.  Lemma \ref{genfunc} applied to the analogous diagram with $\tau$ replaced by $\sigma$ implies that for all $\pi\in S_\sigma(A_2)$, $(\phi\rtimes G)^*\pi$ is a subset of $S_\sigma(A_1)$.  Hence for all $\pi\in S_\tau (A_2)=\cap_{\sigma\in \Sigma}S_\sigma(A_2)$ we have that $(\phi\rtimes G)^*\pi$ is a subset of $\cap_{\sigma\in \Sigma}S_{\sigma}(A_1)=S_\tau(A_1)$.  Lemma \ref{genfunc} now implies that the dashed line can be filled in.
\end{proof}

The part of the following theorem that deals with exactness is due to Eberhard Kirchberg.

\begin{theorem}\label{mc}
There is a unique minimal exact and Morita compatible crossed product.
\end{theorem}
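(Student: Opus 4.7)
The plan is to let $\Sigma$ denote the collection of all exact and Morita compatible crossed product functors for $G$; this is non-empty because the maximal crossed product lies in it (exactness by Lemma \ref{exlem} and Morita compatibility by Lemma \ref{unlem1}). Applying Lemma \ref{infimumu} to $\Sigma$ produces a crossed product $\tau$ with
$$
S_\tau(A) \;=\; \bigcap_{\sigma \in \Sigma} S_\sigma(A)
$$
for every $\gcstar$-algebra $A$; the construction given in the proof of Lemma \ref{infimumu} realises $\tau$ as a quotient of the maximal crossed product by an ideal containing the kernel of the canonical map to $A\rtimes_\sigma G$ for each $\sigma \in \Sigma$, so $\tau$ is smaller than every element of $\Sigma$ in the partial order. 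Uniqueness of a crossed product with a prescribed family of spectra is also guaranteed by Lemma \ref{infimumu}, so any minimum belonging to $\Sigma$ will automatically be unique.

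The remaining content is verifying that $\tau$ itself is exact and Morita compatible. For exactness, consider a short exact sequence $0 \to I \to A \to B \to 0$ in $\gcstar$. Lemma \ref{exact} applied to each $\sigma \in \Sigma$ yields
$$
S_{\max}(I) \cap S_\sigma(A) = S_\sigma(I) \quad \text{and} \quad S_{\max}(B) \cap S_\sigma(A) = S_\sigma(B).
$$
Intersecting both sides over $\sigma \in \Sigma$ (and pulling the fixed set $S_{\max}(I)$ or $S_{\max}(B)$ through the intersection) immediately gives the corresponding identities with $\sigma$ replaced by $\tau$, and Lemma \ref{exact} then confirms the exactness of $\tau$. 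For Morita compatibility, Lemma \ref{mcglb} asks that the canonical bijection $\widehat{\Phi}\colon S_{\max}(A\otimes\mathcal{K}_G)\to S_{\max}(A)$ carry $S_\tau(A\otimes\mathcal{K}_G)$ onto $S_\tau(A)$. Since $\widehat{\Phi}$ is a bijection of sets it preserves arbitrary intersections, and therefore
$$
\widehat{\Phi}\bigl(S_\tau(A\otimes\mathcal{K}_G)\bigr) \;=\; \bigcap_{\sigma \in \Sigma} \widehat{\Phi}\bigl(S_\sigma(A\otimes\mathcal{K}_G)\bigr) \;=\; \bigcap_{\sigma\in \Sigma} S_\sigma(A) \;=\; S_\tau(A),
$$
where the middle equality uses the Morita compatibility of each $\sigma$.

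The conceptual point driving the proof is that both exactness (via Lemma \ref{exact}) and Morita compatibility (via Lemma \ref{mcglb}) have already been rephrased as conditions on the spectra $S_\sigma(A)$ that are stable under arbitrary intersections; granted these reformulations, the only work is the book-keeping above, and there is no genuine obstacle. The one subtlety worth flagging is the set-theoretic status of the collection $\Sigma$: a priori it is a proper class, but for each fixed $A$ the intersection $\bigcap_{\sigma \in \Sigma} S_\sigma(A)$ is an unambiguously defined subset of the set $S_{\max}(A)$, and the footnote in Section \ref{c*cross} (parametrising crossed products by $C^*$-norms on $A \rtimes_\alg G$) confirms that no set-theoretic difficulty arises.
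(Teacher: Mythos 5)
Your proposal is correct and follows essentially the same route as the paper: take $\Sigma$ to be the class of all exact and Morita compatible crossed products, apply Lemma \ref{infimumu} to get a lower bound $\tau$ with $S_\tau(A)=\bigcap_{\sigma\in\Sigma}S_\sigma(A)$, and observe that the spectral criteria of Lemmas \ref{exact} and \ref{mcglb} are stable under intersections. The paper states this last step in one sentence where you spell it out (and you also usefully note non-emptiness of $\Sigma$ via the maximal crossed product), but there is no substantive difference in the argument.
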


\begin{proof}
Let $\Sigma$ be the collection of all exact and Morita compatible crossed products, and let $\tau$ be the crossed product that satisfies $S_\tau (A)=\cap_{\sigma\in \Sigma} S_\sigma(A)$ as in Lemma \ref{infimumu}.  As $\tau$ is a lower bound for the set $\Sigma$, it suffices to show that $\tau$ is exact and Morita compatible.  The conditions in Lemmas \ref{exact} and \ref{mc} clearly pass to intersections, however, so we are done.
\end{proof}

\section{A reformulation of the conjecture}
\label{newbcsec}

Continue with $G$ a second countable, locally compact group.  We
propose to reformulate the Baum-Connes conjecture, replacing the usual
reduced crossed product with the minimal exact and Morita compatible crossed
product (the $\E$-crossed product). There is no change to the left side of the conjecture.

\begin{definition}\label{newbc}
  The \emph{$\E$-Baum-Connes conjecture with coefficients} is the
  statement that the $\E$-Baum-Connes assembly map
\begin{equation*}
    \mu:K_*^{top}(G;A)\to K_*(A\rtimes_\E G) 
\end{equation*}
is an isomorphism for every $\gcstar$-algebra $A$.  Here $A\rtimes_\E G$ is the minimal exact
and Morita compatible crossed product. 
\end{definition}

When the group is exact, the reduced and $\E$-crossed products agree,
and thus the original and reformulated Baum-Connes conjectures agree. Our main
remaining goal is to show that known expander-based counterexamples to the original
Baum-Connes conjecture are confirming examples for the reformulated
conjecture. Indeed, our positive isomorphism results will hold in these examples for
\emph{every} exact and Morita compatible crossed product, in particular for
the reformulated conjecture involving the $\E$-crossed product.
For the isomorphism results, we require an alternate description of
the $\E$-Baum-Connes assembly map, amenable to the standard
Dirac-dual Dirac method of proving the conjecture.

We recall the necessary background about $E$-theory.  The
\emph{equivariant asymptotic category} is the category in which the
objects are the $\gcstar$-algebras and in which the morphisms are
homotopy classes of \emph{equivariant asymptotic morphisms}.  We shall
denote the morphism sets in this category by $\leftam A,B \rightam_G$.
The \emph{equivariant $E$-theory groups} are defined as particular
morphism sets in this category:
\begin{equation*}
  E^G(A,B) = \leftam \Sigma A \otimes\K_G, 
               \Sigma B \otimes \K_G \rightam_G ,
\end{equation*}
where $\Sigma A \otimes \K_G$ stands for $C_0(0,1)\otimes A \otimes \K_G$.
The \emph{equivariant $E$-theory category} is the category in which
the objects are the $\gcstar$-algebras and in which the morphism sets
are the equivariant $E$-theory groups.  

The equivariant categories we have encountered are related by
functors: there is a functor from the category of $\gcstar$-algebras
to the equivariant asymptotic category which is the identity on
objects, and which views an equivariant $*$-homomorphism as a
`constant' asymptotic family; similarly there is a functor from the
equivariant asymptotic category to the equivariant $E$-theory category
which is the identity on objects and which `tensors' an asymptotic
morphism by the identity maps on $C_0(0,1)$ and $\K_G$.

Finally, there is an ordinary (non-equivariant) theory parallel to the
equivariant theory described above: the \emph{asymptotic category} and
\emph{$E$-theory category} are categories in which the objects are
$C^*$-algebras and the morphisms are appropriate homotopy classes of
asymptotic morphisms; there are functors as above, which are the
identity on objects.  See \cite{Guentner:2000fj} for further
background and details.

We start with two technical lemmas.  For a $C^*$-algebra $B$, let $\mathcal{M}(B)$ denote its multiplier algebra.   If $A$ is a $G$-$C^*$-algebra with $G$-action $\alpha$, recall that elements of $A\rtimes_\alg G$ are continuous compactly supported functions from $G$ to $A$; we denote such a function by $(a_g)_{g\in G}$.  Consider the canonical action of $A$ on $A\rtimes_\alg G$ by multipliers defined by setting
\begin{equation}\label{multact}
b\cdot (a_g)_{g\in G}:=(ba_g)_{g\in G}~~ \text{ and } ~~(a_g)_{g\in G}\cdot b:=(a_g\alpha_g(b))_{g\in G}
\end{equation}
for all $(a_g)_{g\in G}\in A\rtimes_\alg G$ and $b\in A$.
This action extends to actions of $A$ on $A\rtimes_{\max} G$ and $A\rtimes_\red G$ by multipliers, i.e.\ there are $*$-homomorphisms $A\to \mathcal{M}(A\rtimes_\max G)$ and $A\to \mathcal{M}(A\rtimes_\red G)$ such that the image of $b\in A$ is the extension of the multiplier defined in line \eqref{multact} above to all of the relevant completion.  Analogously, there is an action of $G$ on $A\rtimes_\alg G$ by multipliers defined for $h\in G$ by
\begin{equation}\label{gmult}
u_h\cdot (a_g)_{g\in G}:=(\alpha_h(a_{h^{-1}g}))_{g\in G}~~\text{ and } (a_g)_{g\in G} \cdot u_h:=\Delta(h^{-1})(a_{gh^{-1}})_{g\in G},
\end{equation}
where $\Delta:G\to \R_+$ is the modular function for a fixed choice of (left invariant) Haar measure on $G$.
This extends to a unitary representation
$$
G\to \mathcal{U}(\mathcal{M}(A\rtimes_{\max} G)),~~~g\mapsto u_g
$$
from $G$ into the unitary group of $\mathcal{M}(A\rtimes_{\max} G)$, and similarly for $\mathcal{M}(A\rtimes_\red G)$.

\begin{lemma}\label{multlem}
For any crossed product functor $\rtimes_\tau$ and any $G$-$C^*$-algebra $A$, the action of $A$ on $A\rtimes_\alg G$ in line \eqref{multact} extends to define an injective $*$-homomorphism 
$$
A\to \mathcal{M}(A\rtimes_\tau G).
$$
This in turn extends to a $*$-homomorphism
$$
\mathcal{M}(A)\to \mathcal{M}(A\rtimes_\tau G)
$$
from the multiplier algebra of $A$ to that of $A\rtimes_\tau G$.

Moreover, the action of $G$ on $A\rtimes_\alg G$ in line \eqref{gmult} extends to define an injective unitary representation
$$
G\to \mathcal{U}(\mathcal{M}(A\rtimes_\tau G)).
$$
\end{lemma}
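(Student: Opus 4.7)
The plan is to observe that both of the desired maps already exist into the multiplier algebra of $A\rtimes_\max G$, and to push them through the canonical surjection $\pi_\tau : A\rtimes_\max G \to A\rtimes_\tau G$ coming from the definition of a crossed product functor. Note that $\pi_\tau$ has dense image (it is the identity on $A\rtimes_\alg G$) and closed image (as a $*$-homomorphism of $C^*$-algebras), so is surjective with closed kernel.

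The key technical ingredient is a general observation about multipliers and ideals: if $D$ is any $C^*$-algebra and $I\subseteq D$ is a closed two-sided ideal, then every multiplier $m\in\mathcal{M}(D)$ satisfies $mI\subseteq I$ and $Im\subseteq I$, hence induces a multiplier of $D/I$. This is a short approximate-unit argument: for $j\in I$ and $(e_\lambda)$ an approximate unit for $I$, write $mj=\lim_\lambda (me_\lambda)j$ and use that $me_\lambda\in D$ and $DI\subseteq I$. The construction produces a canonical $*$-homomorphism $\mathcal{M}(D)\to\mathcal{M}(D/I)$ compatible with $D\to D/I$. I expect this descent step to be the only point with any real content; the rest is bookkeeping.

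Applying this with $D=A\rtimes_\max G$ and $I=\ker\pi_\tau$, and composing with the maps $A\to\mathcal{M}(A\rtimes_\max G)$ and $G\to\mathcal{U}(\mathcal{M}(A\rtimes_\max G))$ already recalled in the paper, I obtain the required $*$-homomorphism $A\to\mathcal{M}(A\rtimes_\tau G)$ and unitary representation $G\to\mathcal{U}(\mathcal{M}(A\rtimes_\tau G))$. By construction each restricts on $A\rtimes_\alg G$ to the prescribed action in \eqref{multact} or \eqref{gmult}. Injectivity follows by composing once more with the further descent $\mathcal{M}(A\rtimes_\tau G)\to \mathcal{M}(A\rtimes_\red G)$ and invoking injectivity of the corresponding reduced maps, which is well known (for $A\neq 0$; the case $A=0$ is vacuous).

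Finally, for the extension $\mathcal{M}(A)\to\mathcal{M}(A\rtimes_\tau G)$, I would invoke the standard universal property: a nondegenerate $*$-homomorphism $A\to\mathcal{M}(B)$ extends uniquely to a unital $*$-homomorphism $\mathcal{M}(A)\to\mathcal{M}(B)$. Nondegeneracy of $A\to\mathcal{M}(A\rtimes_\tau G)$ amounts to density of $A\cdot(A\rtimes_\tau G)$ in $A\rtimes_\tau G$, which reduces to the elementary fact that for any $f\in C_c(G,A)$ and any approximate unit $(e_\lambda)$ of $A$, the net $e_\lambda f$ converges to $f$ in the inductive limit topology, and hence in every $C^*$-norm on $A\rtimes_\alg G$.
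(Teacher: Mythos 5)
Your proposal is correct and follows essentially the same route as the paper: compose the known maps into $\mathcal{M}(A\rtimes_\max G)$ with the $*$-homomorphism $\mathcal{M}(A\rtimes_\max G)\to\mathcal{M}(A\rtimes_\tau G)$ induced by the canonical surjection, deduce injectivity by descending further to $\mathcal{M}(A\rtimes_\red G)$, and extend to $\mathcal{M}(A)$ via non-degeneracy. You merely spell out two standard steps (the descent of multipliers to a quotient, and the approximate-unit verification of non-degeneracy) that the paper leaves as "easily seen".
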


\begin{proof}
The desired $*$-homomorphism $A\to \mathcal{M}(A\rtimes_\tau G)$ can be defined as the composition 
$$
A\to \mathcal{M}(A\rtimes_\max G)\to \mathcal{M}(A\rtimes_\tau G)
$$
of the canonical action of $A$ on the maximal crossed product by multipliers, and the $*$-homomorphism on multiplier algebras induced by the surjective natural transformation between the maximal and $\tau$-crossed products.  Injectivity follows on considering the composition
$$
A\to \mathcal{M}(A\rtimes_\max G)\to \mathcal{M}(A\rtimes_\tau G)\to \mathcal{M}(A\rtimes_\red G),
$$
which is well known (and easily checked) to be injective.  The $*$-homomorphism $A\to \mathcal{M}(A\rtimes_\tau G)$ is easily seen to be non-degenerate, so extends to the multiplier algebra of $A$ as claimed. The existence and injectivity of the unitary representation $G\to \mathcal{U}(\mathcal{M}(A\rtimes_\tau G))$ can be shown analogously.
\end{proof}

Let now `$\odot$' denote the algebraic tensor product (over $\C$) between two $*$-algebras, and as usual use `$\otimes$' for the spatial tensor product of $C^*$-algebras.  Recall that we denote elements of $A\rtimes_\alg G$ by $(a_g)_{g\in G}$.  Equip $C[0,1]$ with the trivial $G$-action, and consider the function defined by
\begin{equation}\label{c01homo}
\phi:C[0,1]\odot (A\rtimes_\alg G)\to (C[0,1]\otimes A)\rtimes_\alg G,~~~~~f\odot (a_g)_{g\in G}\mapsto (f\otimes a_g)_{g\in G}.
\end{equation}
It is not difficult to check that $\phi$ is a well-defined $*$-homomorphism.

\begin{lemma}\label{contlem}
Let $A$ be a $G$-$C^*$-algebra, and $\rtimes_\tau$ be any crossed product.  Then the $*$-homomorphism $\phi$ defined in line \eqref{c01homo} above extends to a $*$-isomorphism 
$$
\phi: C[0,1]\otimes (A\rtimes_\tau G)\cong (C[0,1]\otimes A)\rtimes_\tau G
$$
on $\tau$-crossed products.
If the $\tau$-crossed product is moreover exact, then the restriction of $\phi$ to $C_0(0,1)\odot (A\rtimes_\alg G)$ extends to a $*$-isomorphism
$$
\phi:C_0(0,1)\otimes (A\rtimes_\tau G)\cong (C_0(0,1)\otimes A)\rtimes_\tau G.
$$
\end{lemma}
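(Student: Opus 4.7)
The plan is to prove the first ($C[0,1]$) isomorphism by combining the classical maximal case with a multiplier-algebra argument, and then extract the second ($C_0(0,1)$) isomorphism by a five-lemma argument applied to the short exact sequence $0\to C_0(0,1)\to C[0,1]\to \C\oplus \C\to 0$.

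First, recall that $\phi$ extends to the classical maximal isomorphism
$$\phi_{\max}: C[0,1]\otimes (A\rtimes_{\max}G) \xrightarrow{\cong} (C[0,1]\otimes A)\rtimes_{\max} G,$$
which follows from the universal property of the maximal crossed product together with nuclearity of $C[0,1]$ and triviality of the $G$-action on $C[0,1]$. To construct $\phi_\tau$ for an arbitrary crossed product $\tau$, I apply Lemma \ref{multlem} to the $\gcstar$-algebra $C[0,1]\otimes A$ to obtain an embedding $C[0,1]\subseteq \M(C[0,1]\otimes A)\to \M((C[0,1]\otimes A)\rtimes_\tau G)$, and separately apply functoriality of $\rtimes_\tau$ to the equivariant inclusion $a\mapsto 1\otimes a:A\hookrightarrow C[0,1]\otimes A$ to obtain a $*$-homomorphism $A\rtimes_\tau G\to (C[0,1]\otimes A)\rtimes_\tau G$. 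A direct check on $A\rtimes_\alg G$ using the multiplier formulas \eqref{multact} and \eqref{gmult} shows that these images commute in $\M((C[0,1]\otimes A)\rtimes_\tau G)$, the key point being that $G$ acts trivially on $C[0,1]$; nuclearity of $C[0,1]$ then lets me integrate them to a $*$-homomorphism $\phi_\tau: C[0,1]\otimes (A\rtimes_\tau G)\to (C[0,1]\otimes A)\rtimes_\tau G$ extending $\phi$ on the algebraic level.

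Surjectivity of $\phi_\tau$ follows because its composition with the canonical surjection $C[0,1]\otimes (A\rtimes_\max G)\to C[0,1]\otimes (A\rtimes_\tau G)$ agrees with the surjection $(C[0,1]\otimes A)\rtimes_\max G\to (C[0,1]\otimes A)\rtimes_\tau G$ after precomposing with $\phi_\max^{-1}$. For injectivity, I would use that for each $t\in[0,1]$ the map $\epsilon_t\otimes \mathrm{id}_A:C[0,1]\otimes A\to A$ is equivariant and so, by functoriality of $\rtimes_\tau$, induces $(\epsilon_t\otimes \mathrm{id}_A)\rtimes G: (C[0,1]\otimes A)\rtimes_\tau G\to A\rtimes_\tau G$; a check on the dense subalgebra shows that this composed with $\phi_\tau$ equals $\epsilon_t\otimes \mathrm{id}_{A\rtimes_\tau G}$, and since the identification $C[0,1]\otimes B\cong C([0,1],B)$ makes point evaluations jointly faithful on the source, $\ker\phi_\tau=0$.

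For the second statement, assume $\tau$ is exact and tensor the short exact sequence $0\to C_0(0,1)\to C[0,1]\to \C\oplus \C\to 0$ with $A$, which remains exact by nuclearity. Applying $\rtimes_\tau G$ yields a short exact sequence by exactness of $\tau$; in parallel, tensoring the same sequence with $A\rtimes_\tau G$ over $\C$ yields a short exact sequence, again by nuclearity. The map $\phi_\tau$ from the first part is the middle vertical of the resulting commutative ladder, and the right vertical is the canonical isomorphism $(A\oplus A)\rtimes_\tau G\cong (A\rtimes_\tau G)\oplus (A\rtimes_\tau G)$, valid for any crossed product since direct sums are respected at the algebraic level and by the max and reduced completions. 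The five lemma then produces the desired isomorphism on the left.  The main obstacle throughout is the injectivity step in the first part: naive attempts to compare the kernel ideals of the max-to-$\tau$ surjections on the two sides via $\phi_\max$ become circular, and the evaluation trick above is what resolves this cleanly.
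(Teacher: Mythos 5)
Your argument for the $C[0,1]$ isomorphism is essentially identical to the paper's: the same combination of Lemma \ref{multlem} (to put $C[0,1]$ into $\M((C[0,1]\otimes A)\rtimes_\tau G)$), functoriality applied to $a\mapsto 1\otimes a$, a centrality check on the dense subalgebra plus nuclearity of $C[0,1]$, density for surjectivity, and the evaluation maps $\epsilon_t$ for injectivity. The only divergence is in the $C_0(0,1)$ step: the paper peels off the two endpoints one at a time, using $0\to C_0(0,1]\to C[0,1]\to\C\to 0$ and then $0\to C_0(0,1)\to C_0(0,1]\to\C\to 0$, so that the quotient algebra in each ladder is just $A$ and the right-hand vertical map is literally the identity on $A\rtimes_\tau G$; you instead remove both endpoints at once, which forces you to identify $(A\oplus A)\rtimes_\tau G$ with $(A\rtimes_\tau G)\oplus(A\rtimes_\tau G)$. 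That identification is true for every crossed product functor, but your stated reason (``direct sums are respected at the algebraic level and by the max and reduced completions'') is not quite a proof, since an intermediate completion need not inherit a property from the two extremes; the correct argument is via functoriality --- the equivariant inclusion $A\to A\oplus A$ followed by the projection is the identity, so $A\rtimes_\tau G$ sits as a closed ideal, and the two orthogonal ideals so obtained span. With that point repaired (or by switching to the paper's two-step sequence, which avoids it entirely), your proof is complete and buys nothing different from the paper's.
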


\begin{proof}
The inclusion $A\to C[0,1]\otimes A$ defined by $a\mapsto 1\otimes a$ is equivariant, so gives rise to a $*$-homomorphism 
$$
A\rtimes_\tau G\to (C[0,1]\otimes A)\rtimes_\tau G
$$
by functoriality of the $\tau$-crossed product.  Composing this with the canonical inclusion of the right-hand-side into its multiplier algebra gives a $*$-homomorphism
\begin{equation}\label{mult}
A\rtimes_\tau G\to \mathcal{M}((C[0,1]\otimes A)\rtimes_\tau G).
\end{equation}
On the other hand, composing the canonical $*$-homomorphism $C[0,1]\to \mathcal{M}(C[0,1]\otimes A)$ with the $*$-homomorphism on multiplier algebras from Lemma \ref{multlem} gives a $*$-homomorphism
\begin{equation}\label{mult2}
C[0,1]\to\mathcal{M}((C[0,1]\otimes A)\rtimes_\tau G).
\end{equation}
Checking on the strictly dense $*$-subalgebra $(C[0,1]\otimes A)\rtimes_\alg G$ of $\mathcal{M}((C[0,1]\otimes A)\rtimes_\tau G)$ shows that the image of $C[0,1]$ under the $*$-homomorphism in line \eqref{mult2} is central, whence combining it with the $*$-homomorphism in line \eqref{mult} defines a $*$-homomorphism
$$
C[0,1]\odot (A\rtimes_\tau G)\to  \mathcal{M}((C[0,1]\otimes A)\rtimes_\tau G),
$$
and nuclearity of $C[0,1]$ implies that this extends to a $*$-homomorphism 
$$
C[0,1]\otimes (A\rtimes_\tau G)\to  \mathcal{M}((C[0,1]\otimes A)\rtimes_\tau G).
$$
It is not difficult to see that this $*$-homomorphism agrees with the map $\phi$ from line \eqref{c01homo} on the dense $*$-subalgebra  $C[0,1]\odot(A\rtimes_\alg G)$ of the left-hand-side and thus in particular has image in the $C^*$-subalgebra $(C[0,1]\otimes A)\rtimes_\tau G$ of the right-hand-side.  We have thus shown that the $*$-homomorphism $\phi$ from line \eqref{c01homo} extends to a $*$-homomorphism
$$
\phi:C[0,1]\otimes (A\rtimes_\tau G)\to  (C[0,1]\otimes A)\rtimes_\tau G.
$$
It has dense image, and is thus surjective; in the $C[0,1]$ case it remains to show injectivity.  

To this end, for each $t\in[0,1]$ let 
$$
\epsilon_t:(C[0,1]\otimes A)\rtimes_\tau G\to A\rtimes_\tau G
$$
be the $*$-homomorphism induced by the $G$-equivariant $*$-homomorphism $C[0,1]\otimes A\to A$ defined by evaluation at $t$.  Let $F$ be an element of $C[0,1]\otimes (A\rtimes_\tau G)$, which we may think of as a function from $[0,1]$ to $A\rtimes_\tau G$ via the canonical isomorphism
$$
C[0,1]\otimes (A\rtimes_\tau G)\cong C([0,1],A\rtimes_\tau G).
$$
Checking directly on the dense $*$-subalgebra $C[0,1]\odot (A\rtimes_\alg G)$ of $C[0,1]\otimes (A\rtimes_\tau G)$ shows that $\epsilon_t(\phi(F))=F(t)$ for any $t\in[0,1]$.  Hence if $F$ is in the kernel of $\phi$, then $F(t)=0$ for all $t$ in $[0,1]$, whence $F=0$.  Hence $\phi$ is injective as required.

Assume now that the $\tau$-crossed product is exact, and look at the $C_0(0,1)$ case.  The short exact sequence
$$
0\to C_0(0,1]\to C[0,1]\to \C\to 0
$$
combined with exactness of the maximal tensor product, nuclearity of commutative $C^*$-algebras, and exactness of the $\tau$-crossed product gives rise to a commutative diagram 
$$
\xymatrix{ 0 \ar[r] & C_0(0,1]\otimes (A\rtimes_\tau G) \ar[d] \ar[r] & C[0,1]\otimes (A\rtimes_\tau G) \ar[d]_{\phi}^{\cong} \ar[r] & A\rtimes_\tau G \ar[r] \ar[d]^{=}& 0 \\
0 \ar[r] & (C_0(0,1]\otimes A)\rtimes_\tau G \ar[r] & (C[0,1]\otimes A)\rtimes_\tau G \ar[r] & A\rtimes_\tau G \ar[r] & 0}
$$
with exact rows, and where the leftmost vertical arrow is the restriction of $\phi$.  The restriction of $\phi$ to $C_0(0,1]\otimes (A\rtimes_\tau G)$ is thus an isomorphism onto $(C_0(0,1]\otimes A)\rtimes_\tau G$.  Applying an analogous argument to the short exact sequence
$$
0\to C_0(0,1)\to C_0(0,1]\to \C\to 0
$$
completes the proof.
\end{proof}

Given this, the following result is an immediate
generalization of \cite[Theorem 4.12]{Guentner:2000fj}, which treats
the maximal crossed product.  See also \cite[Theorem
4.16]{Guentner:2000fj} for comments on the reduced crossed product.

\begin{theorem}\label{descent}
  If the $\tau$-crossed product is both exact and Morita compatible, then
  there is a `descent functor' from the equivariant $E$-theory
  category to the $E$-theory category which agrees with the
  $\tau$-crossed product functor on objects and on those morphisms
  which are \textup{(}represented by\textup{)} equivariant
  $*$-homomorphisms.  
\end{theorem}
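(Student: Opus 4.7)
The plan is to adapt the construction of the equivariant $E$-theoretic descent functor given in \cite[Theorem 4.12]{Guentner:2000fj}, replacing the use of the universal property of $\rtimes_{\max}$ by the exactness and Morita compatibility hypotheses on $\rtimes_\tau$. On objects, and on morphisms represented by equivariant $*$-homomorphisms, the descent functor is defined to be $\rtimes_\tau G$ itself; the content of the theorem is the extension to general equivariant asymptotic morphisms.

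Given an equivariant asymptotic morphism $(\phi_t)_{t\in[1,\infty)}\colon A \to B$, I would first encode it as an equivariant $*$-homomorphism $\widetilde{\phi}\colon A \to \mathfrak{A}^G(B)$, where $\mathfrak{A}^G(B)$ is the quotient of the $C^*$-algebra of bounded continuous $B$-valued functions on $[1,\infty)$ carrying a continuous $G$-action by the ideal of such functions that vanish at infinity. Applying the functor $\rtimes_\tau G$ yields a $*$-homomorphism $\widetilde\phi \rtimes_\tau G\colon A\rtimes_\tau G \to \mathfrak{A}^G(B) \rtimes_\tau G$. The heart of the proof is to construct a natural $*$-homomorphism
$$
\Psi\colon \mathfrak{A}^G(B) \rtimes_\tau G \to \mathfrak{A}(B\rtimes_\tau G);
$$
composing $\widetilde\phi \rtimes_\tau G$ with $\Psi$ then yields a $*$-homomorphism into an asymptotic algebra, that is, an asymptotic morphism $A\rtimes_\tau G \to B\rtimes_\tau G$, which serves as the descent of $(\phi_t)$.

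To construct $\Psi$, I would apply $\rtimes_\tau G$ to the equivariant short exact sequence
$$
0 \to C_0([1,\infty), B) \to C^G_b([1,\infty), B) \to \mathfrak{A}^G(B) \to 0,
$$
and use exactness of $\rtimes_\tau$ to obtain a short exact sequence of $C^*$-algebras. The analog of Lemma \ref{contlem} with $C_0([1,\infty))$ in place of $C_0(0,1)$ identifies $C_0([1,\infty), B)\rtimes_\tau G$ with $C_0([1,\infty), B\rtimes_\tau G)$. Using the multiplier algebra constructions of Lemma \ref{multlem}, I would then build a $*$-homomorphism $C^G_b([1,\infty), B)\rtimes_\tau G \to C_b([1,\infty), B\rtimes_\tau G)$ compatible with this identification at the $C_0$ level, and pass to quotients to obtain $\Psi$. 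Compatibility of the assignment with composition of asymptotic morphisms and with homotopy follows formally from naturality. Extension from the equivariant asymptotic category to the full equivariant $E$-theory category is achieved by tensoring with the suspension $C_0(0,1)$, where exactness of $\rtimes_\tau$ together with Lemma \ref{contlem} yields $C_0(0,1)\otimes (A\rtimes_\tau G) \cong (C_0(0,1)\otimes A)\rtimes_\tau G$, and by tensoring with $\mathcal{K}_G$, where Morita compatibility supplies the analogous identification $(A\otimes\mathcal{K}_G)\rtimes_\tau G \cong (A\rtimes_\tau G)\otimes \mathcal{K}_G$.

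The main obstacle is the construction and verification of $\Psi$: one must check carefully that the multiplier-algebra maps furnished by Lemma \ref{multlem} assemble into a bona fide $*$-homomorphism $C^G_b([1,\infty), B)\rtimes_\tau G \to C_b([1,\infty), B\rtimes_\tau G)$, rather than merely a pointwise-defined family of maps, and that the induced map on quotients by the respective $C_0$-ideals is a well-defined $\Psi$. Once $\Psi$ is in hand, essentially everything else can be imported from the argument in \cite{Guentner:2000fj} by replacing $\max$ with $\tau$ throughout and invoking the exactness and Morita compatibility hypotheses at the points indicated above.
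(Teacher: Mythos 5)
Your proposal is correct and follows essentially the same route as the paper: the paper's proof simply cites (an analogue of) \cite[Theorem~3.5]{Guentner:2000fj} for the descent of asymptotic morphisms under a continuous and exact functor, which is exactly the construction of the comparison map $\Psi\colon \mathfrak{A}^G(B)\rtimes_\tau G\to\mathfrak{A}(B\rtimes_\tau G)$ that you spell out, with continuity supplied by Lemma~\ref{contlem}. The final identification of morphism sets via $(C_0(0,1)\otimes A\otimes\K_G)\rtimes_\tau G\cong C_0(0,1)\otimes(A\rtimes_\tau G)\otimes\K_G$, using exactness together with Lemma~\ref{contlem} and Morita compatibility, is likewise exactly the paper's argument.
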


\begin{proof}
  We follow the proof of \cite[Theorem 6.22]{Guentner:2000fj}.  It
  follows from Lemma~\ref{contlem} that a crossed product
  functor is always \emph{continuous} in the sense of
  \cite[Definition~3.1]{Guentner:2000fj}.  Applying (an obvious
  analogue of) \cite[Theorem~3.5]{Guentner:2000fj}, an exact crossed
  product functor admits descent from the equivariant asymptotic
  category to the asymptotic category.  Thus, we have maps on morphism
  sets in the asymptotic categories
\begin{equation*}
    E^G(A,B) =  \leftam \Sigma A\otimes \K_G, \Sigma B\otimes \K_G \rightam_G \to 
      \leftam (\Sigma A \otimes\K_G)\rtimes_\tau G, (\Sigma B\otimes \K_G)\rtimes_\tau G \rightam
\end{equation*}
which agree with the $\tau$-crossed product on morphisms represented
by equivariant $*$-homomorphisms.  It remains to identify the right
hand side with the $E$-theory group 
$E(A\rtimes_\tau G,B\rtimes_\tau G)$.  We do this by showing that
\begin{equation*}
 \left( C_0(0,1)\otimes A \otimes \K_G \right)\rtimes_\tau G \cong 
     C_0(0,1) \otimes \left( A\rtimes_\tau G \right) \otimes \K_G.
\end{equation*}
This follows immediately from Morita compatibilty and Lemma~\ref{contlem}.
\end{proof}

We now have an alternate description of the $\tau$-Baum-Connes
assembly map in the case of an exact, Morita compatible crossed product
functor: we can descend directly to the $\tau$-crossed products and
compose with the basic projection.  In detail, it follows from
Definition~\ref{cp} and the corresponding fact for the maximal and
reduced crossed products, that if $X$ is a proper, cocompact
$G$-space, then all crossed products of $C_0(X)$ by $G$ agree.  We
view the basic projection as an element of $C_0(X)\rtimes_\tau G$,
giving a class in $E(\C,C_0(X)\rtimes_\tau G)$.  We form the
composition
\begin{equation}\label{tauass}
\xymatrix{
   E^G(C_0(X),A)\ar[r] & E(C_0(X)\rtimes_\tau G,A\rtimes_\tau G)\ar[r]
        &  E(\C,A\rtimes_\tau G), }
\end{equation}
in which the first map is the $E$-theoretic $\tau$-descent and the
second is composition with the (class of the) basic projection.
Taking the direct limit over the cocompact subsets of $\underline{E}G$
we obtain a map
\begin{equation*}
     K_*^{top}(G;A)\to K_*(A\rtimes_\tau G).
\end{equation*}

\begin{proposition}
  The map just defined is the $\tau$-Baum-Connes assembly map.
\end{proposition}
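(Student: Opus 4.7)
The plan is to identify the map obtained from line \eqref{tauass} upon taking direct limits over cocompact $X\subseteq \underline{E}G$ with the map in line \eqref{tauBC-v1} that defines the $\tau$-Baum-Connes assembly map. For each cocompact $X$, this reduces to verifying commutativity of the diagram
\begin{equation*}
\xymatrix{
E^G(C_0(X),A) \ar[r] \ar[d]_{=} & E(C_0(X)\rtimes_\max G, A\rtimes_\max G) \ar[r] \ar[d] & E(\C, A\rtimes_\max G) \ar[d] \\
E^G(C_0(X),A) \ar[r] & E(C_0(X)\rtimes_\tau G, A\rtimes_\tau G) \ar[r] & E(\C, A\rtimes_\tau G)
}
\end{equation*}
in which the horizontal arrows are the maximal (resp.\ $\tau$-) descent followed by composition with the basic projection, and the right vertical arrows are induced by the natural transformation $A\rtimes_\max G\to A\rtimes_\tau G$. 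The top composition followed by the right vertical map is, by definition, the map in line \eqref{tauBC-v1}; the bottom composition is the map in line \eqref{tauass}. So commutativity closes the proof after passage to the direct limit.

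The right-hand square commutes because, as noted just before line \eqref{tauass}, all crossed products of $C_0(X)$ by $G$ agree when $X$ is proper and cocompact. Thus the natural transformation $C_0(X)\rtimes_\max G\to C_0(X)\rtimes_\tau G$ is the identity and the basic projection represents a single class in $E(\C, C_0(X)\rtimes_\max G)=E(\C, C_0(X)\rtimes_\tau G)$; functoriality of composition in $E$-theory then yields the desired equality.

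The left-hand square encodes naturality of descent with respect to the natural transformation of crossed products. On classes represented by equivariant $*$-homomorphisms this is immediate from Theorem \ref{descent}: each descent agrees with the underlying crossed product functor on such morphisms, and the natural transformation $\rtimes_\max\to\rtimes_\tau$ therefore intertwines them. For general equivariant $E$-theory classes, I would extend this naturality by unwinding the construction of descent recalled in the proof of Theorem \ref{descent} and in \cite{Guentner:2000fj}: an equivariant asymptotic morphism is sent to its termwise $\tau$-crossed product (using the continuity guaranteed by Lemma \ref{contlem} and exactness to make sense of the termwise construction), this is stabilised by tensoring with $\Sigma\otimes\K_G$, and the result is passed to homotopy classes. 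At each step the construction is visibly compatible with the natural transformation, because that transformation is itself given termwise by $*$-homomorphisms on the relevant completions. The main obstacle will be to make this naturality watertight at the level of homotopy classes of asymptotic morphisms, rather than just on genuine $*$-homomorphisms; this amounts to checking that the equivalence relations and stabilisations used to define $E$-theory are respected by the natural transformation, which is routine but requires care. Once this is settled, taking the direct limit over cocompact $X\subseteq \underline{E}G$ delivers the identification of assembly maps.
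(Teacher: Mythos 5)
Your proposal is correct and follows essentially the same route as the paper: the same reduction to a two-square diagram, the same observation that all crossed products agree on $C_0(X)$ for $X$ proper and cocompact (handling the right square), and the same naturality-of-descent argument for the left square. The one step you flag as "routine but requires care" --- naturality at the level of homotopy classes of asymptotic morphisms --- is precisely what the paper resolves by citing \cite[Proposition~3.6]{Guentner:2000fj} (a natural transformation between continuous, exact functors induces a natural transformation of the corresponding functors on the asymptotic category) together with \cite[Theorem~4.6]{Guentner:2000fj} to pass to the $E$-theory category.
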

\begin{proof}
  We have to show that applying
  the maps (\ref{tauass}) to an element $\theta\in E^G(C_0(X),A)$
  gives the same result as applying those in (\ref{ass}) followed by
  the map on $K$-theory induced by the natural transformation $\psi_A:A\rtimes_\max G\to A\rtimes_\tau G$.    Noting that $C_0(X)\rtimes_{\max}G=C_0(X)\rtimes_\tau G$ (as all crossed products applied to a proper algebra give the same result), we have the class of the basic projection $[p]\in E(\C,C_0(X)\rtimes_{\max}G)=E(\C,C_0(X)\rtimes_\tau G)$, and the above amounts to saying that the morphisms
  \begin{equation}\label{m1}
 \psi_A\circ (\theta \rtimes_{\max}G)\circ [p]~~~,~~~( \theta\rtimes_{\tau}G)\circ [p]~:~\C\to A\rtimes_\tau G
  \end{equation}
  in the $E$-theory category are the same.
  
  As the functors defined by the $\tau$ and maximal crossed products are continuous and exact, \cite[Proposition~3.6]{Guentner:2000fj} shows that the natural transformation $A\rtimes_{\max}G \to A\rtimes_\tau G$ gives rise to a natural transformation between the corresponding functors on the asymptotic category.  Hence if $\theta$ is any morphism in $\leftam C_0(X),A\rightam_G$ the diagram 
  \begin{equation}\label{nattrans}
  \xymatrix{
    C_0(X)\rtimes_\max G \ar@{=}[r]\ar[d]_{\theta\rtimes_\max G} & 
         C_0(X)\rtimes_\tau G \ar[d]^{\theta\rtimes_\tau G} \\ 
       A\rtimes_\max G \ar[r]^{\psi_A} & A\rtimes_\tau G }
 \end{equation}
  commutes in the asymptotic category.  Hence by \cite[Theorem~4.6]{Guentner:2000fj} the diagram
    $$
  \xymatrix{
    \Sigma (C_0(X)\rtimes_\max G)\otimes\mathcal{K} \ar@{=}[r]\ar[d]_{1\otimes\theta\rtimes_\max G\otimes 1} & 
         \Sigma (C_0(X)\rtimes_\tau G)\otimes \mathcal{K} \ar[d]^{1\otimes \theta\rtimes_\tau G\otimes 1} \\ 
       \Sigma (A\rtimes_\max G)\otimes \mathcal{K} \ar[r]^{1\otimes\psi_A\otimes 1} & \Sigma (A\rtimes_\tau G) \otimes \mathcal{K}}
 $$
  commutes in the asymptotic category, which says exactly that the diagram in line \eqref{nattrans} commutes in the $E$-theory category.  In other words, as morphisms in the $E$-theory category
  $$
     \theta\rtimes_{\tau}G= \psi_A\circ (\theta \rtimes_{\max}G),
  $$
  whence the morphisms in line \eqref{m1} are the same.
\end{proof}

We close the section with the following `two out of three' result,
which will be our main tool for proving the $\E$-Baum-Connes
conjecture in cases of interest.  

\begin{proposition}\label{bcses}
Assume $G$ is a countable discrete group.  Let $\tau$ be an exact and Morita compatible crossed product.  Let  
\begin{equation*}
  \xymatrix{ 0 \ar[r] & I \ar[r] & A \ar[r] & B\ar[r] & 0}
\end{equation*}
be a short exact sequence of separable $\gcstar$-algebras.  If the
$\tau$-Baum-Connes conjecture is valid with coefficients in two of
$I$, $A$ and $B$ then it is valid with coefficients in the third.
\end{proposition}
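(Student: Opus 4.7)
The plan is to prove the result by a standard five-lemma argument applied to a commutative ladder of six-term exact sequences: the topological $K$-theory six-term sequence on the left of assembly, and the $K$-theory six-term sequence of the $\tau$-crossed products on the right. Assuming validity at two of the three coefficient algebras gives an isomorphism in four consecutive rungs of the ladder, and the five lemma will then yield an isomorphism at the third.

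First I would construct the six-term exact sequence on the left. Fix a cocompact $G$-invariant subset $X \subseteq \underline{E}G$. Since $\{C_0(X)\}$ enjoys the standard half-exactness properties of equivariant $E$-theory, the short exact sequence $0 \to I \to A \to B \to 0$ of separable $\gcstar$-algebras induces a six-term exact sequence
\begin{equation*}
\xymatrix@C=1.5em{
E^G(C_0(X), I) \ar[r] & E^G(C_0(X), A) \ar[r] & E^G(C_0(X), B) \ar[d] \\
E^G(\Sigma C_0(X), B) \ar[u] & E^G(\Sigma C_0(X), A) \ar[l] & E^G(\Sigma C_0(X), I) \ar[l]
}
\end{equation*}
Taking the direct limit over the cocompact subsets of $\underline{E}G$, and using that direct limits preserve exactness, yields the desired six-term sequence in $K_*^{top}(G;-)$.

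Next I would construct the six-term sequence on the right. Exactness of the $\tau$-crossed product (Definition~\ref{cpex}) converts the given sequence into a short exact sequence
\[
0 \to I \rtimes_\tau G \to A \rtimes_\tau G \to B \rtimes_\tau G \to 0
\]
of $C^*$-algebras, which yields the usual six-term exact sequence in $K$-theory.

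The crux is then the ladder: one must show the $\tau$-assembly maps $\mu$ commute with every arrow of the two six-term sequences. Commutativity with the horizontal maps induced by the equivariant $*$-homomorphisms $I \to A$ and $A \to B$ is immediate from the functoriality of the description of $\mu$ given in \eqref{tauass}: descent is a functor (Theorem~\ref{descent}) and the basic projection is fixed with $X$. The non-trivial point is naturality with respect to the connecting maps. Here I would use that the connecting map in equivariant $E$-theory is obtained from a mapping-cone construction, together with the fact that, by Theorem~\ref{descent}, the $\tau$-descent is a functor on the whole equivariant $E$-theory category whose value on equivariant $*$-homomorphisms agrees with the $\tau$-crossed product. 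Applied to the standard mapping-cone presentation of the boundary map, this forces $\tau$-descent to carry the equivariant connecting map for $0 \to I \to A \to B \to 0$ to the ordinary $K$-theory connecting map for $0 \to I\rtimes_\tau G \to A\rtimes_\tau G \to B\rtimes_\tau G \to 0$ (the latter sequence being exact precisely because $\tau$ is exact). This is the same argument that establishes naturality of the usual maximal or reduced Baum--Connes assembly map under the six-term exact sequence; once descent is available in the category \emph{from} which the assembly map starts, the naturality is formal, and Theorem~\ref{descent} provides exactly this.

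The main obstacle is therefore the verification of naturality for the connecting maps: making sure that the mapping-cone description of the $E$-theoretic boundary, transported through $\tau$-descent, really does land on the $K$-theoretic boundary for the $\tau$-crossed product short exact sequence. Given that, commuting everything in sight and taking the limit over $X \subseteq \underline{E}G$ produces the commutative ladder of six-term exact sequences, and one final application of the five lemma at the third coefficient algebra completes the proof.
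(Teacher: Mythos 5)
Your proposal is correct and follows the same overall architecture as the paper's proof: a six-term exact sequence on the left from half-exactness of equivariant $E$-theory, a six-term sequence on the right from exactness of $\rtimes_\tau$ together with $K$-theory, naturality of assembly via the descent functor of Theorem~\ref{descent}, and a final application of the five lemma. You also correctly isolate the crux, namely that $\tau$-descent must carry the equivariant boundary class to the boundary class of the descended short exact sequence. The one place you genuinely diverge is in how that compatibility is verified. The paper proves it as a separate lemma by direct computation: it realizes the $E$-theoretic boundary $\gamma_I\in E^G(C_0(0,1)\otimes B, I)$ via a quasi-central, asymptotically invariant approximate unit $\{u_t\}$ for $I$ and a set-theoretic section $s:B\to A$, then (using discreteness of $G$ to identify $A$ with $\{au_e\}\subseteq A\rtimes_\tau G$ and choosing $s$ with $s(Bu_g)\subseteq Au_g$) checks on the dense subalgebra $C_0(0,1)\odot(B\rtimes_\alg G)$ that the descended asymptotic morphism coincides with the one defining $\gamma_{I\rtimes_\tau G}$, invoking Lemma~\ref{contlem} to identify $(C_0(0,1)\otimes B)\rtimes_\tau G$ with $C_0(0,1)\otimes(B\rtimes_\tau G)$. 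Your mapping-cone route is a viable alternative, but it is not quite as formal as you suggest: you would still have to prove that the $\tau$-crossed product of the mapping cone of $A\to B$ is the mapping cone of $A\rtimes_\tau G\to B\rtimes_\tau G$, which again requires exactness of $\tau$ and the $C_0$-of-an-interval compatibility of Lemma~\ref{contlem}; this is comparable in effort to the paper's explicit check, so neither approach buys a real shortcut over the other.
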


In the case that $G$ is exact (or just $K$-exact), the analogous result for the usual Baum-Connes conjecture was proved by Chabert and Echterhoff: see \cite[Proposition 4.2]{Chabert:2001hl}.  However, the result does \emph{not} hold in general for the
usual Baum-Connes conjecture due to possible failures of exactness on
the right hand side; indeed, its failure is the reason behind the
known counterexamples.  

We only prove Proposition \ref{bcses} in the case of a discrete group as this is technically much simpler, and all we need for our results.  As pointed out by the referee, one could adapt the proof of \cite[Proposition 4.2]{Chabert:2001hl} to extend the result to the locally compact case; however, this would necessitate working in $KK$-theory.  We give a direct $E$-theoretic proof here in order to keep our paper as self-contained as possible.

Before we start the proof, we recall the construction of the boundary map in equivariant $E$-theory associated to a short exact sequence
$$
  \xymatrix{ 0 \ar[r] & I \ar[r] & A \ar[r] & B\ar[r] & 0}
$$
of $G$-$C^*$-algebras.  See \cite[Chapter 5]{Guentner:2000fj} for more details.  Let $\{u_t\}$ be an approximate identity for $I$ that is quasi-central for $A$, and asymptotically $G$-invariant; such exists by \cite[Lemma 5.3]{Guentner:2000fj}.  Let $s:B\to A$ be a set-theoretic section.  Then there is an asymptotic morphism 
$$
\sigma:C_0(0,1)\otimes B\to \mathfrak{A}(I):=\frac{C_b([1,\infty),I)}{C_0([1,\infty),I)}
$$
which is asymptotic to the map defined on elementary tensors by
$$
f\otimes b\mapsto (~t\mapsto f(u_t)s(b)~)
$$
(see  \cite[Proposition 5.5]{Guentner:2000fj}) such that the corresponding class $\sigma\in \leftam C_0(0,1)\otimes B,I\rightam_G$ does not depend on the choice of $\{u_t\}$ or $s$ (\cite[Lemma 5.7]{Guentner:2000fj}).  We then set
$$
\gamma_I=1\otimes \sigma\otimes 1\in \leftam \Sigma(C_0(0,1)\otimes B)\otimes \mathcal{K}_G,\Sigma I\otimes \mathcal{K}_G\rightam_G=E_G(C_0(0,1)\otimes B,I)
$$
to be the $E$-theory class associated to this extension.  This construction works precisely analogously in the non-equivariant setting.

\begin{lemma}
Let $G$ be a countable discrete group.
  Given a short exact sequence of separable $\gcstar$-algebras 
  $$
    \xymatrix{ 0 \ar[r] & I \ar[r] & A \ar[r] & B\ar[r] & 0}
  $$
  there is an element $\gamma_I\in E^G(C_0(0,1)\otimes B, I)$ as above.  There is also a short exact sequence of $\cstar$-algebras
    $$
    \xymatrix{ 0 \ar[r] & I\rtimes_\tau G \ar[r] & A\rtimes_\tau G  \ar[r] & B\rtimes_\tau G \ar[r] & 0}
  $$
giving rise to 
  $\gamma_{I\rtimes_\tau G}\in 
  E(C_0(0,1)\otimes \left( B\rtimes_\tau G \right),I\rtimes_\tau G)$. 
 
 The descent functor associated to the $\tau$ crossed product then takes $\gamma_I$ to $\gamma_{I\rtimes_\tau G}$.
\end{lemma}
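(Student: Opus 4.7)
The plan is to represent both classes by explicit asymptotic morphisms built from compatible choices, and then match them on a dense $*$-subalgebra. First I would invoke \cite[Lemma 5.3]{Guentner:2000fj} to fix an approximate identity $\{u_t\}_{t\in[1,\infty)}$ for $I$ that is quasi-central for $A$ and asymptotically $G$-invariant, together with a set-theoretic section $s:B\to A$; these data yield the asymptotic morphism $\sigma$ described in the paragraph preceding the lemma and hence the class $\gamma_I$.

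Next I would promote these choices to the crossed product. Since $G$ is discrete, the embedding $I\hookrightarrow I\rtimes_\tau G$ as functions supported at $e$ turns $\{u_t\}$ into an approximate identity of $I\rtimes_\tau G$; I would verify quasi-centrality with respect to $A\rtimes_\tau G$ by computing on the dense subalgebra $A\rtimes_\alg G$ the commutator
\begin{equation*}
[u_t,\,a u_g]\;=\;(u_t a-a g(u_t))\,u_g\;=\;[u_t,a]u_g+a(u_t-g(u_t))u_g,
\end{equation*}
and noting that the first summand goes to zero by quasi-centrality for $A$ and the second by asymptotic $G$-invariance; a standard $\eps/3$ argument then extends quasi-centrality to all of $A\rtimes_\tau G$. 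I would similarly define $\tilde s:B\rtimes_\tau G\to A\rtimes_\tau G$ by extending $\sum b_g u_g\mapsto\sum s(b_g)u_g$ arbitrarily as a set-theoretic section. These data produce the asymptotic morphism $\sigma'$ representing $\gamma_{I\rtimes_\tau G}$.

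Now I would apply the descent construction underlying Theorem \ref{descent} (i.e.\ the $\tau$-analogue of \cite[Theorem 3.5]{Guentner:2000fj}) to $\sigma$, producing an asymptotic morphism $(C_0(0,1)\otimes B)\rtimes_\tau G\to \mathfrak{A}(I)\rtimes_\tau G$, compose with the natural $*$-homomorphism $\mathfrak{A}(I)\rtimes_\tau G\to \mathfrak{A}(I\rtimes_\tau G)$ induced by the canonical map $C_b([1,\infty),I)\rtimes_\tau G\to C_b([1,\infty),I\rtimes_\tau G)$ and its compatibility with vanishing-at-infinity functions, and finally use the isomorphism $(C_0(0,1)\otimes B)\rtimes_\tau G\cong C_0(0,1)\otimes (B\rtimes_\tau G)$ supplied by Lemma \ref{contlem} (here exactness of $\tau$ is essential) to arrive at a second asymptotic morphism $C_0(0,1)\otimes (B\rtimes_\tau G)\to \mathfrak{A}(I\rtimes_\tau G)$ whose class is the image of $\gamma_I$ under descent.

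Finally I would compare. Unwinding the definitions, both asymptotic morphisms send an elementary element $f\otimes b u_g$ of the dense $*$-subalgebra $C_0(0,1)\odot(B\rtimes_\alg G)$ to the class at time $t$ of $f(u_t)s(b)u_g\in I\rtimes_\tau G$, so they agree on a dense $*$-subalgebra. Since each is an asymptotic $*$-homomorphism out of a $C^*$-algebra, agreement on a dense $*$-subalgebra forces them to be equal as asymptotic morphisms, hence to represent the same element of $E(C_0(0,1)\otimes(B\rtimes_\tau G),I\rtimes_\tau G)$. I expect the main obstacle to be the careful bookkeeping in the middle paragraph: verifying that the descent construction from \cite{Guentner:2000fj} genuinely goes through verbatim for an arbitrary exact, Morita compatible $\rtimes_\tau$ (producing an honest asymptotic family with values in $\mathfrak{A}(I\rtimes_\tau G)$), and checking that it is compatible with the identifications of Lemma \ref{contlem}.
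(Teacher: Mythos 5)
Your proposal is correct and follows essentially the same route as the paper: both arguments reuse the quasi-central, asymptotically $G$-invariant approximate unit $\{u_t\}$ for $I$ as an approximate unit for $I\rtimes_\tau G$, choose a compatible section of $A\rtimes_\tau G\to B\rtimes_\tau G$ extending $s$ on elements of the form $bu_g$, and then match the descended asymptotic morphism with $\sigma_{I\rtimes_\tau G}$ on the dense $*$-subalgebra $C_0(0,1)\odot(B\rtimes_{\alg}G)$ via the identification of Lemma \ref{contlem}. Your explicit commutator computation verifying quasi-centrality in $A\rtimes_\tau G$ is a detail the paper asserts without proof, but it does not change the argument.
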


\begin{proof}
Identify $A$ with the $C^*$-subalgebra $\{au_e~|~a\in A\}$ of $A\rtimes_\tau G$, and similarly for $B$ and $I$.  Choose any set-theoretic section $s:B\rtimes_\tau G\to A\rtimes_\tau G$, which we may assume has the property that $s(Bu_g)\subseteq Au_g$ for all $g\in G$.  We then have that $\sigma_I$ is asymptotic to the map
$$
f\otimes b\mapsto (t\mapsto f(u_t)s(b)).
$$
Checking directly, the image of $\sigma_I$ under descent agrees with the formula
\begin{equation}\label{sigmades}
f\otimes  \sum_{g\in G}bu_g \mapsto \Big(t\mapsto f(u_t)s(b)u_g)\Big)
\end{equation}
on elements of the algebraic tensor product $C_0(0,1)\odot(B\rtimes_\alg G)$.

On the other hand, we may use $s$ and $\{u_t\}$ (which identifies with a quasi-central approximate unit for $I\rtimes_\tau G$ under the canonical inclusion $I\to I\rtimes_\tau G$) to define $\sigma_{I\rtimes_\tau G}$, in which case the formula in line \eqref{sigmades} agrees with that for $\sigma_{I\rtimes_\tau G}$ on the dense $*$-subalgebra $(C_0(0,1)\otimes B)\rtimes_\alg G$ of $(C_0(0,1)\otimes B)\rtimes_\tau G$.   Thus up to the identification $(C_0(0,1)\otimes B)\rtimes_\tau G\cong C_0(0,1)\otimes (B\rtimes_\tau G)$ from Lemma \ref{contlem}, the image of $\sigma\in \leftam C_0(0,1)\otimes B,I\rightam_G$ under descent is the same as $\sigma_{I\rtimes_\tau G}\in \leftam C_0(0,1)\otimes (B\rtimes_\tau G) ,I\rtimes_\tau G\rightam$ and we are done.
\end{proof}

\begin{proof}[Proof of Proposition \ref{bcses}]
Basic exactness properties of $K$-theory and exactness of the
$\tau$-crossed product give
a six-term exact sequence on the right hand side of the conjecture:
\begin{equation}\label{sixex1}
\xymatrix{ 
   K_0(I\rtimes_\tau G) \ar[r] & K_0(A\rtimes_\tau G) \ar[r] & K_0(B\rtimes_\tau G) \ar[d] \\
   K_1(B\rtimes_\tau G) \ar[u] & \ar[l] K_1(A\rtimes_\tau G) & \ar[l] K_1(I\rtimes_\tau G). }
\end{equation}
Similarly, basic exactness properties of equivariant $E$-theory give a
six-term sequence on the left hand side:
\begin{equation}\label{sixex2}
\xymatrix{ 
   K_0^{top}(G;I) \ar[r] & K_0^{top}(G;A) \ar[r] & K_0^{top}(G;B) \ar[d] \\
   K_1^{top}(G;B) \ar[u] & \ar[l] K_1^{top}(G;A) & \ar[l] K_1^{top}(G;I) .}
\end{equation}
The corresponding maps in these diagrams are given by composition with
elements of equivariant $E$-theory groups, and the corresponding
descended elements of $E$-theory groups; for example,
the left hand vertical map in (\ref{sixex2}) is induced by the equivariant
asymptotic morphism associated to the original short exact sequence of
$\gcstar$-algebras, and the corresponding map in (\ref{sixex1}) is
induced by its descended asymptotic morphism.

Further, the assignments
\begin{equation*}
  A\mapsto K_*(A\rtimes_\tau G),~~~A\mapsto K_*^{top}(G;A) 
\end{equation*}
define functors from $E^G$ to abelian groups, and functoriality
of descent together with associativity of $E$-theory compositions
imply the assembly map is a natural transformation between these
functors.  Hence assembly induces compatible maps between the six-term
exact sequences in lines \eqref{sixex1} and \eqref{sixex2}.  The
result now follows from the five lemma. 
\end{proof}

\section{Some properties of the minimal exact and Morita compatible crossed product}\label{minpropsec}

In this section, we study a natural class of exact and Morita compatible crossed products, and use these to deduce some properties of the minimal exact and Morita compatible crossed product.  In particular, we show that the usual property (T) obstructions to surjectivity of the maximal Baum-Connes assembly map do not apply to our reformulated conjecture.   We also give a concrete example of a crossed product that could be equal to the minimal one.

Throughout the section, $G$ denotes a locally compact, second countable group. 

\begin{definition}\label{cps2}
Let $\tau$ be a crossed product, and $B$ a fixed unital $\gcstar$-algebra.  For any $G$-$C^*$-algebra $A$, the \emph{$\tau$-$B$ completion} of $A\rtimes_{\alg}G$, denoted $A\rtimes_{\tau,B}G$, is defined to be the image of the map
$$
A\rtimes_\tau G \to (A\otimes_{\max} B)\rtimes_\tau G
$$
induced by the equivariant inclusion
$$
A\to A\otimes_{\max}B,~~~a\mapsto a\otimes 1.
$$
\end{definition}

\begin{lemma}\label{corfunc}
For any $\gcstar$-algebra $B$ and crossed product $\tau$, the family of completions $A\rtimes_{\tau,B} G$ defined above are a crossed product functor.   
\end{lemma}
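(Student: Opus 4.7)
The plan is to verify the three requirements of Definition \ref{cp}: that each $A\rtimes_{\tau,B}G$ is a completion of $A\rtimes_\alg G$; that the assignment is functorial in $A$; and that the natural transformations $\rtimes_\max \to \rtimes_{\tau,B} \to \rtimes_\red$ exist and restrict to the identity on $A\rtimes_\alg G$.

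First I would observe that since $B$ is unital (and $G$-automorphisms of a unital $C^*$-algebra fix the identity), the map $\iota_A\colon A\to A\otimes_\max B$, $a\mapsto a\otimes 1$, is an injective equivariant $*$-homomorphism. By functoriality of $\rtimes_\tau$ it induces a $*$-homomorphism $A\rtimes_\tau G\to (A\otimes_\max B)\rtimes_\tau G$ whose image is by definition $A\rtimes_{\tau,B}G$. Its restriction to $A\rtimes_\alg G$ is the evident algebraic map $\iota_A\rtimes_\alg G$, which is injective with dense image in $A\rtimes_{\tau,B}G$, exhibiting the latter as a completion of $A\rtimes_\alg G$. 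Composing with the quotient $A\rtimes_\max G\to A\rtimes_\tau G$ that comes from $\rtimes_\tau$ being a crossed product gives the natural quotient $A\rtimes_\max G\to A\rtimes_{\tau,B}G$, which clearly restricts to the identity on $A\rtimes_\alg G$.

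The key step will be showing that the $(\tau,B)$-norm dominates the reduced norm on $A\rtimes_\alg G$. The essential input is the standard fact that reduced crossed products preserve injective equivariant $*$-homomorphisms (proved by noting that a faithful representation of the larger algebra restricts to a faithful representation of the smaller one, so that the two regular representations are compatible). Applying this to $\iota_A$ yields an isometric embedding $A\rtimes_\red G\hookrightarrow (A\otimes_\max B)\rtimes_\red G$. Then the composition
\begin{equation*}
A\rtimes_{\tau,B}G\hookrightarrow (A\otimes_\max B)\rtimes_\tau G\twoheadrightarrow (A\otimes_\max B)\rtimes_\red G
\end{equation*}
carries $A\rtimes_\alg G$ onto the image of $\iota_A\rtimes_\alg G$ inside $(A\otimes_\max B)\rtimes_\red G$, so its closed image coincides with the isometric copy of $A\rtimes_\red G$ sitting there. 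This produces the desired natural quotient $A\rtimes_{\tau,B}G\to A\rtimes_\red G$ extending the identity on $A\rtimes_\alg G$, completing the sandwich between the maximal and reduced norms.

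Finally, for functoriality, an equivariant $*$-homomorphism $\phi\colon A_1\to A_2$ gives an equivariant $\phi\otimes\text{id}_B\colon A_1\otimes_\max B\to A_2\otimes_\max B$, which via $\rtimes_\tau$-functoriality induces a $*$-homomorphism between the ambient $(A_i\otimes_\max B)\rtimes_\tau G$. This restricts on the dense subalgebra $\iota_{A_1}(A_1\rtimes_\alg G)$ to the algebraic map corresponding to $\phi\rtimes_\alg G$, so by continuity it maps $A_1\rtimes_{\tau,B}G$ into $A_2\rtimes_{\tau,B}G$. Compatibility with composition and naturality of both transformations follow on checking everything on the dense subalgebras $A\rtimes_\alg G$. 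The only non-routine ingredient is the preservation of injections by the reduced crossed product; the rest is bookkeeping with dense subalgebras and continuous extensions.
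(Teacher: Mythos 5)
Your argument is correct and follows essentially the same route as the paper: the crucial input in both is that equivariant injections induce (isometric) injections on reduced crossed products, which forces the composite $A\rtimes_{\tau,B}G\to (A\otimes_{\max}B)\rtimes_\red G$ to land in the embedded copy of $A\rtimes_\red G$ and thereby produces the quotient onto the reduced crossed product, while functoriality comes from composing the functor $A\mapsto A\otimes_{\max}B$ with $\rtimes_\tau$. The paper's proof is just a terser version of yours (phrased as filling in a dashed arrow in a commutative square), so there is nothing to add.
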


\begin{proof}
To see that $\rtimes_{\tau,B}$ dominates the reduced completion, note that as the $\tau$ completion dominates the reduced completion there is a commutative diagram
$$
\xymatrix{ A\rtimes_{\tau,B}G \ar@{-->}[r] \ar[d] & A\rtimes_\red G \ar[d] \\ (A\otimes_{\max} B)\rtimes_\tau G \ar[r] & (A\otimes_{\max} B)\rtimes_\red G },
$$   
where the vertical arrows are induced by the equivariant inclusion $a\mapsto a\otimes 1$, and the bottom horizontal arrow is the canonical natural transformation between the $\tau$ and reduced crossed products.  We need to show the dashed horizontal arrow can be filled in.  This follows as equivariant inclusions of $\gcstar$-algebras induce inclusions of reduced crossed products, whence the right vertical map is injective.  

The fact that $\rtimes_{\tau,B}$ is a functor follows as the assignment $A\mapsto  A\otimes_{\max} B$ defines a functor from the category of $G$-$C^*$-algebras to itself, and the $\tau$ crossed product is a functor.  
\end{proof}

From now on, we refer to the construction in Definition \ref{cps2} as the \emph{$\tau$-$B$-crossed product}.

\begin{lemma}\label{corexmc}
Let $\tau$ be a crossed product, and $B$ a unital $\gcstar$-algebra.  If the $\tau$-crossed product is Morita compatible \textup{(}respectively, exact\textup{)}, then the $\tau$-$B$-crossed product is Morita compatible \textup{(}exact\textup{)}.
\end{lemma}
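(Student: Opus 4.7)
The plan is to handle Morita compatibility and exactness in parallel. In both cases one starts from the defining map $f_A:A\rtimes_\tau G\to(A\otimes_{\max}B)\rtimes_\tau G$, whose image is by definition $A\rtimes_{\tau,B}G$, and transfers the relevant property of $\tau$ to $\tau$-$B$ by applying the hypothesized property to the $A\otimes_{\max}B$ side and then restricting to images.

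For Morita compatibility, the key observation is that nuclearity of $\mathcal{K}_G$ gives a canonical $G$-equivariant $*$-isomorphism $(A\otimes\mathcal{K}_G)\otimes_{\max}B\cong(A\otimes_{\max}B)\otimes\mathcal{K}_G$. Combined with the untwisting isomorphisms $\Phi_A:(A\otimes\mathcal{K}_G)\rtimes_\tau G\cong(A\rtimes_\tau G)\otimes\mathcal{K}_G$ and $\Phi_{A\otimes_{\max}B}$, which exist by Morita compatibility of $\tau$, this identifies the defining map $f_{A\otimes\mathcal{K}_G}$ with $f_A\otimes\operatorname{id}_{\mathcal{K}_G}$; taking images on each side then yields the required $*$-isomorphism $(A\otimes\mathcal{K}_G)\rtimes_{\tau,B}G\cong(A\rtimes_{\tau,B}G)\otimes\mathcal{K}_G$.

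For exactness, begin with a short exact sequence $0\to I\to A\to A/I\to 0$ of $G$-$C^*$-algebras. Since the maximal tensor product preserves short exact sequences (see e.g.\ \cite[Proposition 3.7.1]{Brown:2008qy}) and $\tau$ is exact, applying $-\otimes_{\max}B$ and then $-\rtimes_\tau G$ produces a commutative diagram with two exact rows, linked vertically by $f_I$, $f_A$, $f_{A/I}$; the three $\tau$-$B$-crossed products appear as $C^*$-subalgebras of the bottom row via these maps. Injectivity of $I\rtimes_{\tau,B}G\to A\rtimes_{\tau,B}G$, surjectivity of $A\rtimes_{\tau,B}G\to(A/I)\rtimes_{\tau,B}G$, and the inclusion (image)$\subseteq$(kernel) in the middle all follow routinely from the diagram.

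The main obstacle is the reverse inclusion (kernel)$\subseteq$(image). If $x\in A\rtimes_{\tau,B}G$ maps to zero in $(A/I)\rtimes_{\tau,B}G$, exactness of the bottom row places $x$ only in $(I\otimes_{\max}B)\rtimes_\tau G$, which is typically strictly larger than $I\rtimes_{\tau,B}G$. I would close this gap with a multiplier approximation: take a quasi-central, asymptotically $G$-invariant approximate unit $(e_\lambda)$ for $I$ in $A$ (existing by \cite[Lemma 5.3]{Guentner:2000fj}), and write $x=f_A(z)$ with $z\in A\rtimes_\tau G$. Since $e_\lambda\in I$, the product $e_\lambda z$ lies in $I\rtimes_\tau G$, and a direct check on the algebraic crossed product combined with Lemma \ref{multlem} gives $(e_\lambda\otimes 1)\cdot x=f_A(e_\lambda z)=f_I(e_\lambda z)\in I\rtimes_{\tau,B}G$. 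Because $(e_\lambda\otimes 1)$ is an approximate unit for $I\otimes_{\max}B$, and hence for the ideal $(I\otimes_{\max}B)\rtimes_\tau G$, we have $(e_\lambda\otimes 1)\cdot x\to x$; closedness of $I\rtimes_{\tau,B}G$ then forces $x\in I\rtimes_{\tau,B}G$, completing the proof.
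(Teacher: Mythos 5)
Your proposal is correct and follows essentially the same route as the paper: the same nuclearity-plus-untwisting diagram (with injectivity of the vertical maps) for Morita compatibility, and for exactness the same reduction of the problem to showing $A\rtimes_{\tau,B}G\cap(I\otimes_{\max}B)\rtimes_\tau G\subseteq I\rtimes_{\tau,B}G$ via the multiplier net $e_\lambda\otimes 1$. The only (harmless) differences are that the paper uses a plain approximate unit rather than a quasi-central one, and approximates $x$ by a bounded net $x_i\in A\rtimes_{\alg}G$ with $v_ix_i\in I\rtimes_{\alg}G$ instead of acting on a fixed preimage $z\in A\rtimes_\tau G$.
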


\begin{proof}
To see Morita compatibility, consider the commutative diagram
$$
\xymatrix{ (A\otimes\mathcal{K}_G)\rtimes_{\tau,B} G \ar@{-->}[rr] \ar[d] & & (A\rtimes_{\tau,B}G)\otimes \mathcal{K}_G \ar[d] \\
((A\otimes \mathcal{K}_G)\otimes_{\max} B)\rtimes_\tau G \ar[r]^\cong & ((A\otimes_{\max} B)\otimes \mathcal{K}_G)\rtimes_\tau G \ar[r]^\cong & ((A\otimes_{\max} B)\rtimes_\tau G)\otimes \mathcal{K}_G~, }
$$
where the left arrow on the bottom row comes from nuclearity of $\mathcal{K}_G$ and associativity of the maximal crossed product; the right arrow on the bottom row is the Morita compatibility isomorphism; and the vertical arrows are by definition of the $\tau$-$B$ crossed product.  It suffices to show that the dashed arrow exists and is an isomorphism: this follows from the fact that the vertical arrows are injections.

To see exactness, consider a short exact sequence of $G$-$C^*$-algebras
$$
\xymatrix{ 0 \ar[r] &  I \ar[r] & A \ar[r] & Q \ar[r] & 0 }
$$
and the corresponding commutative diagram
$$
\xymatrix{ 0 \ar[r] &  I\rtimes_{\tau,B} G \ar[r]^\iota \ar[d] & A\rtimes_{\tau,B} G \ar[r]^\pi \ar[d] & Q\rtimes_{\tau,B} G \ar[r] \ar[d] & 0  \\
0 \ar[r] &  (I\otimes_{\max}B)\rtimes_{\tau} G \ar[r] & (A\otimes_{\max} B)\rtimes_{\tau}G \ar[r] & (Q\otimes_{\max} B)\rtimes_{\tau}G \ar[r] & 0 }.
$$
Note that all the vertical maps are injections by definition.  Moreover, the bottom row is exact by exactness of the maximal tensor product, and the assumed exactness of the $\tau$ crossed product.  The only issue is thus to show that the kernel of $\pi$ is equal to the image of $\iota$.  

The kernel of $\pi$ is $A\rtimes_{\tau,B} G\cap  (I\otimes_{\max} B)\rtimes_{\tau} G$, so we must show that this is equal to $I\rtimes_{\tau,B} G$.  The inclusion
$$
I\rtimes_{\tau,B} G\subseteq A\rtimes_{\tau,B} G\cap  (I\otimes_{\max}B)\rtimes_{\tau} G
$$
is automatic, so it remains to show the reverse inclusion.  Let $x$ be an element of the right hand side.  Let $\{u_i\}$ be an approximate identity for $I$, and note that $\{v_i\}:=\{u_i\otimes1\}$ can be thought of as a net in the multiplier algebra of $(I\otimes_{\max} B)\rtimes_{\tau} G$ via Lemma \ref{multlem}.  The net $\{v_i\}$ is an `approximate identity' in the sense that $v_iy$ converges to $y$ for all $y\in (I\otimes_{\max} B)\rtimes_{\tau} G$.  Let $\{x_i\}$ be a (bounded) net in $A\rtimes_{\alg} G$ converging to $x$ in the $A\rtimes_{\tau,B}G$ norm, which we may assume has the same index set as $\{v_i\}$.  Note that
$$
\|v_ix_i-x\|\leq \|v_ix_i-v_ix\|+\|v_ix-x\|\leq \|v_i\|\|x_i-x\|+\|v_ix-x\|,
$$
which tends to zero as $i$ tends to infinity.  Note, however, that $v_ix_i$ belongs to $I\rtimes_{\alg}G$ (considered as a $*$-subalgebra of $(I\otimes_{\max} B)\rtimes_{\tau} G$), so we are done.
\end{proof}

We now specialize to the case when $\tau$ is $\E$, the minimal exact crossed product.  

\begin{corollary}\label{equal}
For any unital $\gcstar$-algebra $B$, the $\E$-crossed product and $\E$-$B$-crossed product are equal.
\end{corollary}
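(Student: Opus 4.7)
The plan is to use the minimality of $\E$ and a two-sided comparison argument. The key observation is that the $\E$-$B$-crossed product is, by its very construction, a quotient of the $\E$-crossed product, while at the same time Lemma~\ref{corexmc} guarantees that $\E$-$B$ remains exact and Morita compatible. Since $\E$ is minimal among exact and Morita compatible crossed products, these two pieces of information must force equality.

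First I would unpack Definition~\ref{cps2}: for every $\gcstar$-algebra $A$, the algebra $A\rtimes_{\E,B}G$ is defined as the image of the canonical $*$-homomorphism $A\rtimes_{\E}G\to (A\otimes_{\max}B)\rtimes_{\E}G$ induced by $a\mapsto a\otimes 1$. In particular, this makes $A\rtimes_{\E,B}G$ a quotient of $A\rtimes_{\E}G$, and hence in the ordering on crossed products introduced in Section~\ref{c*cross}, we have $\rtimes_{\E,B}\le \rtimes_{\E}$, i.e.\ $\|x\|_{\E,B}\le \|x\|_{\E}$ for every $x\in A\rtimes_{\alg}G$. (That $\rtimes_{\E,B}$ really is a crossed product in the sense of Definition~\ref{cp}, i.e.\ dominates the reduced norm, was established in Lemma~\ref{corfunc}.)

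Next I would apply Lemma~\ref{corexmc} with $\tau=\E$: since $\E$ is exact and Morita compatible by Theorem~\ref{mc}, so is $\E$-$B$. Now invoke the minimality half of Theorem~\ref{mc}: $\E$ is the smallest element of the collection of exact and Morita compatible crossed products, so $\rtimes_{\E}\le \rtimes_{\E,B}$, i.e.\ $\|x\|_{\E}\le \|x\|_{\E,B}$ for every $x\in A\rtimes_{\alg}G$.

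Combining the two inequalities yields $\|x\|_{\E}=\|x\|_{\E,B}$ on $A\rtimes_{\alg}G$, whence $A\rtimes_{\E}G=A\rtimes_{\E,B}G$ as completions of $A\rtimes_{\alg}G$, naturally in $A$. There is no real obstacle here: the only thing to be careful about is that one uses \emph{both} directions of the partial order (one direction from the definition of $\E$-$B$, the other from the minimality of $\E$), and that Lemma~\ref{corexmc} applies because $B$ is assumed unital (this hypothesis was used implicitly via the embedding $a\mapsto a\otimes 1$).
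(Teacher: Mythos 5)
Your argument is correct and is essentially identical to the paper's: the paper likewise notes that $\rtimes_{\E,B}$ is by definition no larger than $\rtimes_{\E}$, invokes Lemma~\ref{corexmc} to see that $\rtimes_{\E,B}$ is exact and Morita compatible, and concludes by minimality of $\rtimes_{\E}$. You have merely spelled out the two-sided norm comparison that the paper leaves implicit.
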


\begin{proof}
It is immediate from the definition that the $\E$-$B$-crossed product is no larger than the $\E$-crossed product.  Lemma \ref{corexmc} implies that the $\E$-$B$-crossed product is exact and Morita compatible, however, so they are equal by minimality of the $\E$-crossed product.
\end{proof}

\begin{corollary}\label{inc}
For any unital $\gcstar$-algebra $B$ and any $\gcstar$-algebra $A$, the map 
$$
A\rtimes_\E G \to (A\otimes_{\max} B)\rtimes_\E G 
$$
induced by the inclusion $a\mapsto a\otimes1$ is injective.
\end{corollary}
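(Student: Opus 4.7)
The plan is to observe that Corollary \ref{inc} is essentially immediate from Corollary \ref{equal} together with the definition of the $\E$-$B$-crossed product.

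By Definition \ref{cps2}, the algebra $A\rtimes_{\E,B}G$ is, by construction, the image of the map
\[
\phi_B : A\rtimes_\E G \to (A\otimes_{\max} B)\rtimes_\E G
\]
induced by the equivariant inclusion $a\mapsto a\otimes 1$. Thus $\phi_B$ factors canonically as
\[
A\rtimes_\E G \xrightarrow{\;q\;} A\rtimes_{\E,B}G \xrightarrow{\;j\;} (A\otimes_{\max} B)\rtimes_\E G,
\]
where $q$ is surjective (by definition of the image) and $j$ is injective (also by definition of the image, regarded as a $C^*$-subalgebra of the codomain).

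Now invoke Corollary \ref{equal}, which asserts that $A\rtimes_{\E,B}G = A\rtimes_\E G$ as completions of $A\rtimes_\alg G$. Since both completions extend the identity on the dense $*$-subalgebra $A\rtimes_\alg G$ (this is part of the definition of a crossed product functor), the surjection $q$ is in fact the identity, hence an isomorphism. Consequently $\phi_B = j\circ q$ is injective, which is exactly the claim. No further step is needed; the only subtlety worth spelling out is that $q$ really is the identity on $A\rtimes_\alg G$, but this is forced by Definition \ref{cp} since both the source and target of $q$ sit between $A\rtimes_{\max}G$ and $A\rtimes_{\red}G$ with identical norms on $A\rtimes_\alg G$ (by Corollary \ref{equal}).
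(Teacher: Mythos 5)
Your proposal is correct and is essentially the paper's own argument: the paper likewise observes that the image of the map is by definition $A\rtimes_{\E,B}G$ and then cites Corollary \ref{equal}. You have merely made explicit the (valid) point that the surjection onto the image restricts to the identity on $A\rtimes_\alg G$ and hence, by equality of the two norms, is an isomorphism.
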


\begin{proof}
The image of the map $A\rtimes_\E G$ is (by definition) equal to $A\rtimes_{\E,B}G$ so this is immediate from Corollary \ref{equal}.
\end{proof}

The following result implies that the usual property (T) obstructions to surjectivity of the maximal Baum-Connes assembly map do not apply to the $\E$-Baum-Connes conjecture: see Corollary \ref{no(t)} below.  The proof is inspired by \cite[Proof of Theorem 2.6.8, part (7) $\Rightarrow$ (1)]{Brown:2008qy}.

\begin{proposition}\label{nofd}
Say the $C^*$-algebra $C^*_\E(G):=\C\rtimes_\E G$ admits a non-zero finite dimensional representation.  Then $G$ is amenable.
\end{proposition}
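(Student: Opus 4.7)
The plan is to combine Corollary~\ref{inc} with Fell's absorption principle and an untwisting argument for inner actions. From the given non-zero representation $\pi: C^*_\E(G) \to M_n(\C)$, I first restrict $\pi$ to the canonical copy of $G$ in $M(C^*_\E(G))$ to obtain a unitary representation $u: G \to U(n)$. I then equip $B := M_n(\C)$ with the inner $G$-action $\alpha_g = \text{Ad}(u_g)$, making it a unital $\gcstar$-algebra, and apply Corollary~\ref{inc} with $A = \C$ and this $B$: the resulting $*$-homomorphism $\iota: C^*_\E(G) = \C \rtimes_\E G \to B \rtimes_\E G$ is injective, sending the canonical unitary $v_g$ to $v_g$.

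The main obstacle will be showing that the standard ``untwisting'' isomorphism for inner actions descends to the $\E$-crossed product. The algebraic map $bv_g \mapsto bu_g \otimes g$ is a $*$-isomorphism $B \rtimes_\alg G \to B \odot \C[G]$ (exchanging the inner action $\alpha$ on the left for the trivial action on the right), and extends to $*$-isomorphisms $B \rtimes_{\alpha, \max} G \cong M_n \otimes C^*_\max(G)$ and $B \rtimes_{\alpha, \red} G \cong M_n \otimes C^*_\red(G)$. I would argue that the same algebraic map also extends to a $*$-isomorphism $\Phi_\E: B \rtimes_\E G \cong M_n \otimes C^*_\E(G)$ (invoking the nuclearity of $M_n$ and triviality of the action on the right so that $M_n \rtimes_{\triv, \E} G = M_n \otimes C^*_\E(G)$). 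Since $(B, \alpha)$ and $(B, \triv)$ are exterior equivalent via the cocycle $u$, I expect this to follow from the minimality of $\E$ established in Theorem~\ref{mc}: transferring $\E$ along the maximal-level untwisting should yield a functor that remains exact and Morita compatible, hence coincides with $\E$ by minimality.

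Granted this untwisting, $\iota$ becomes an injection $C^*_\E(G) \hookrightarrow M_n \otimes C^*_\E(G)$ sending $g \mapsto u_g \otimes g$. Composing with $\text{id}_{M_n} \otimes q$, where $q: C^*_\E(G) \to C^*_\red(G)$ is the canonical quotient, yields a $*$-homomorphism $\psi: C^*_\E(G) \to M_n \otimes C^*_\red(G)$, $g \mapsto u_g \otimes \lambda_g$. By Fell's absorption principle, $\psi$ factors as $C^*_\E(G) \xrightarrow{q} C^*_\red(G) \xrightarrow{\sigma} M_n \otimes C^*_\red(G)$ with $\sigma$ injective. Combining the injectivity of the enlarged $\iota$ with $\ker(\text{id}_{M_n} \otimes q) = M_n \otimes \ker q$ (nuclearity of $M_n$) should force $\ker q = 0$, and hence $C^*_\E(G) = C^*_\red(G)$. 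Finally, with $\pi$ now realized as a non-zero finite-dimensional representation of $C^*_\red(G)$, a standard argument in the spirit of \cite[Theorem~2.6.8]{Brown:2008qy} concludes amenability: composing the diagonal $\Delta: C^*_\red(G) \to C^*_\red(G) \otimes_\max C^*_\red(G)$ with $\bar\pi \otimes \pi$ yields a finite-dimensional representation of $C^*_\red(G)$ equivalent to $\text{Ad}\circ \pi$, which contains the trivial representation on the invariant vector $I \in M_n$; thus the trivial character factors through $C^*_\red(G)$, which by Hulanicki's theorem forces $G$ to be amenable.
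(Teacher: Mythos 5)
There is a genuine gap, and it occurs at the decisive final step. Write $q:C^*_\E(G)\to C^*_\red(G)$ for the canonical quotient and $\iota':C^*_\E(G)\to M_n\otimes C^*_\E(G)$, $g\mapsto u_g\otimes g$, for your (putative) injection. You claim that injectivity of $\iota'$ together with $\ker(\mathrm{id}_{M_n}\otimes q)=M_n\otimes\ker q$ forces $\ker q=0$. This is a non sequitur: injectivity of $\iota'$ does not prevent its image from meeting $M_n\otimes\ker q$ nontrivially, and indeed your own Fell-absorption computation shows that $\ker\bigl((\mathrm{id}\otimes q)\circ\iota'\bigr)=\ker q$, which is perfectly consistent with $\iota'$ injective and $\ker q\neq 0$. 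So the intermediate conclusion $C^*_\E(G)=C^*_\red(G)$ is not established, and the argument collapses there. (This is also why the choice $B=M_n(\C)$ with the inner action cannot do the work you want: an inner action untwists to a tensor product, so $\C\to M_n$ injects $C^*_\E(G)$ into something that carries no new information about $G$; the force of Corollary \ref{inc} in this proposition comes from taking $B$ with a genuinely nontrivial action.)

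There is a second, smaller gap: the descent of the untwisting isomorphism $(M_n,\mathrm{Ad}\,u)\rtimes_{\max}G\cong M_n\otimes C^*_{\max}(G)$ to the $\E$-level. "Transferring $\E$ along the untwisting" is an operation on a single object, not a crossed-product functor on $\gcstar$, and even if it were, minimality (Theorem \ref{mc}) only yields a comparison map in one direction, not the asserted isomorphism. (This step is probably repairable—e.g.\ by tensoring with $\K_G$, using Fell absorption to produce a genuine equivariant isomorphism $(M_n\otimes\K_G,\mathrm{Ad}\,u\otimes\mathrm{Ad}\,\Lambda)\cong(M_n\otimes\K_G,\triv\otimes\mathrm{Ad}\,\Lambda)$, and then invoking Morita compatibility—but that argument is not in your write-up.) For contrast, the paper's proof applies Corollary \ref{inc} with $B=C_{ub}(G)$, extends $\pi$ to a u.c.p.\ map on $C_{ub}(G)\rtimes_\E G$ by Arveson's extension theorem, uses the multiplicative domain to see that the extension is $G$-equivariant, and composes with the trace on $\mathcal{B}(\mathcal{H})$ to produce an invariant mean on $C_{ub}(G)$; it never tries to show $C^*_\E(G)=C^*_\red(G)$.
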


\begin{proof}
Let $C_{ub}(G)$ denote the $C^*$-algebra of bounded, (left) uniformly continuous functions on $G$, and let $\alpha$ denote the (left) action of $G$ on this $C^*$-algebra, which is a continuous action by $*$-automorphisms.  It will suffice (compare \cite[Section G.1]{Bekka:2000kx}) to show that if $C^*_\E(G)$ has a non-zero finite dimensional representation then there exists an \emph{invariant mean} on $C_{ub}(G)$: a state $\phi$ on $C_{ub}(G)$ such that $\phi(\alpha_g(f))=\phi(f)$ for all $g\in G$ and $f\in C_{ub}(G)$.  

Assume then there is a non-zero representation $\pi:C^*_\E(G)\to \mathcal{B}(\mathcal{H})$, where $\mathcal{H}$ is finite dimensional.  Passing to a subrepresentation, we may assume $\pi$ is non-degenerate whence it comes from a unitary representation of $G$, which we also denote $\pi$.   Applying Corollary \ref{inc} to the special case $A=\C$, $B=C_{ub}(G)$, we have that $C^*_\E(G)$ identifies canonically with a $C^*$-subalgebra of $C_{ub}(G)\rtimes_\E G$.  Hence by Arveson's extension theorem (in the finite dimensional case - see \cite[Corollary 1.5.16]{Brown:2008qy}) there exists a contractive completely positive map 
$$
\rho:C_{ub}(G)\rtimes_\E G\to\mathcal{B}(\mathcal{H})
$$
extending $\pi$.  As $\pi$ is non-degenerate, $\rho$ is, whence (\cite[Corollary 5.7]{Lance:1995ys}) it extends to a strictly continuous unital completely positive map on the multiplier algebra, which we denote
$$
\rho:\mathcal{M}(C_{ub}(G)\rtimes_\E G)\to\mathcal{B}(\mathcal{H}).
$$
Now, note that as $\pi$ is a representation, the $C^*$-subalgebra $C^*_\E(G)$ of $\mathcal{M}(C_{ub}(G)\rtimes_\E G)$ is in the multiplicative domain of $\rho$ (compare \cite[page 12]{Brown:2008qy}).  Note that the image of $G$ inside $\mathcal{M}(C_{ub}(G)\rtimes_{\max}G)$ is in the strict closure of the $*$-subalgebra $C_c(G)$, whence the same is true in the image of $G$ in $\mathcal{M}(C_{ub}(G)\rtimes_\E G)$ given by Lemma \ref{multlem}; it follows from this and strict continuity of $\rho$ that the image of $G$ in $\mathcal{M}(C_{ub}(G)\rtimes_\E G)$ is also in the multiplicative domain of $\rho$.  Hence for any $g\in G$ and $f\in C_{ub}(G)$,
$$
\rho(\alpha_g(f))=\rho(u_gfu_g^*)=\pi(g)\rho(f)\pi(g)^*.
$$
It follows that if $\tau:\mathcal{B}(\mathcal{H})\to \C$ is the canonical tracial state, then $\tau\circ \rho$ is an invariant mean on $C_{ub}(G)$, so $G$ is amenable.
\end{proof}

We now discuss the relevance of this proposition to the property (T) obstructions to the maximal Baum-Connes conjecture.  Recall that if $G$ is a group with property (T), then for any finite dimensional unitary representation $\pi$ of $G$ (for example, the trivial representation), there is a central \emph{Kazhdan projection} $p_\pi$ in $C^*_{\max}(G)$ that maps to the orthogonal projection onto the $\pi$-isotypical component in any unitary representation of $G$.  When $G$ is infinite and discrete\footnote{It is suspected that this is true in general, but we do not know of a proof in the literature.}, it is well-known \cite[Discussion below 5.1]{Higson:1998qd} that the class of $p_\pi$ in $K_0(C^*_{\max}(G))$ is not in the image of the maximal Baum-Connes assembly map.  Thus, at least for infinite discrete groups, the projections $p_\pi$ obstruct the maximal  version of the Baum-Connes conjecture.

The following corollary, which is immediate from the above proposition, shows that these obstructions do not apply to the $\E$-crossed product.

\begin{corollary}\label{no(t)}
Let $G$ be a group with property (T), and $\pi$ be a finite dimensional representation of $G$.  Write $C^*_\E(G):=\C\rtimes_\E G$.  Then the canonical quotient map $C^*_{\max}(G)\to C^*_\E(G)$ sends $p_\pi$ to zero.  \qed
\end{corollary}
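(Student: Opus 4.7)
The plan is to argue by contradiction, directly invoking Proposition~\ref{nofd}.  Let $q$ denote the image of $p_\pi$ under the canonical quotient $C^*_{\max}(G)\to C^*_\E(G)$, and suppose towards a contradiction that $q\neq 0$.  Choose a faithful non-degenerate $*$-representation $\rho:C^*_\E(G)\to \mathcal{B}(H)$; composing with the quotient map yields a non-degenerate representation of $C^*_{\max}(G)$, which by Lemma~\ref{multlem} integrates a strongly continuous unitary representation $u:G\to \mathcal{U}(H)$.  The defining property of the Kazhdan projection $p_\pi$ then identifies the range of $\rho(q)$ with the $\pi$-isotypical subspace of $u$, which is nonzero since $\rho$ is faithful.

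The next step is to extract a nonzero finite-dimensional representation of $C^*_\E(G)$ from this data.  Because $\pi$ is finite dimensional, the $\pi$-isotypical subspace of any $G$-representation is a direct sum of $G$-invariant copies of (subrepresentations of) $\pi$; picking any one such copy produces a finite-dimensional $G$-invariant subspace $W\subseteq \rho(q)H$.  Now every element of $\rho(C^*_\E(G))$ is a norm limit of integrated operators $\rho(f)=\int_G f(g)u_g\,dg$ with $f\in C_c(G)$, and such operators preserve every closed $u$-invariant subspace of $H$.  Hence $W$ is invariant under all of $\rho(C^*_\E(G))$, and the restriction $\rho|_W$ is a nonzero finite-dimensional $*$-representation of $C^*_\E(G)$.

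Applying Proposition~\ref{nofd} now forces $G$ to be amenable.  Since $G$ is assumed to have property~(T), amenability makes $G$ compact, in which case all completions of $C_c(G)$ coincide and the conclusion is trivial (and the preceding discussion of Baum--Connes obstructions degenerate).  In the genuinely interesting case---a non-compact group with property~(T)---this contradicts the assumption $q\neq 0$, so $q=0$ as required.  The only step that needs genuine care is the passage from $u$-invariance of $W$ to invariance under the full algebra $\rho(C^*_\E(G))$; beyond that, the argument is a direct application of Proposition~\ref{nofd}, matching the authors' claim that the corollary is immediate.
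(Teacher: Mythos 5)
Your argument is correct and is exactly the route the paper intends: the paper offers no written proof beyond declaring the corollary immediate from Proposition~\ref{nofd}, and your chain (faithful nondegenerate representation, nonzero $\pi$-isotypical subspace, a finite-dimensional $G$-invariant subspace that is automatically invariant under the norm closure of the integrated $C_c(G)$-action, hence a nonzero finite-dimensional representation of $C^*_\E(G)$, hence amenability) is the standard way to make that immediacy explicit. One correction to your closing paragraph: in the compact case the conclusion is not trivial but false---for a finite or compact group $p_\pi$ is a genuine nonzero projection in $C^*(G)=C^*_\E(G)$ by Peter--Weyl---so the corollary carries an implicit noncompactness hypothesis (consistent with the paper's surrounding discussion of infinite discrete groups), and your contradiction argument proves precisely that intended case.
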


Finally in this section, we specialize to the case of discrete groups and look at the particular example of the $\max$-$l^\infty(G)$-crossed product.  We show below that this crossed product is actually equal to the reduced one when $G$ is exact.  It is thus possible that the $\max$-$l^\infty(G)$-crossed product actually is the $\E$-crossed product.  As further evidence in this direction, note that for any \emph{commutative} unital $B$, there is a unital equivariant map from $B$ to $l^\infty(G)$ by restriction to any orbit.  This shows that the $\max$-$l^\infty(G)$-crossed product is the greatest lower bound of the $\max$-$B$-crossed products as $B$ ranges over commutative unital $C^*$-algebras.  We do not know what happens when $B$ is noncommutative: quite plausibly here one can get something strictly smaller.  Of course, there could also be many other constructions of exact and Morita compatible crossed products that do not arise as above.

\begin{proposition}\label{exreduced}
Say $G$ is exact.  Then the \emph{$\max$}-$l^\infty(G)$-crossed product equals the reduced crossed product.
\end{proposition}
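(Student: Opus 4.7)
The plan is to exploit amenability of the translation action of $G$ on $l^\infty(G)$, which follows from exactness of $G$, and to combine it with the fact that reduced crossed products preserve equivariant inclusions.

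First, I would invoke Ozawa's characterization of exactness for a countable discrete group: $G$ is exact if and only if the left translation action of $G$ on $l^\infty(G)$ is (topologically) amenable in the sense of Anantharaman--Delaroche. Second, I would use the transfer principle that if $G$ acts amenably on a nuclear $G$-$C^*$-algebra $B$, then for any $G$-$C^*$-algebra $A$ the diagonal action on $A\otimes_\max B$ is also amenable, and the canonical surjection
\begin{equation*}
(A\otimes_\max B)\rtimes_\max G \to (A\otimes_\max B)\rtimes_\red G
\end{equation*}
is an isomorphism. This is a standard consequence of the positive-type-function characterization of amenability: approximating functions $\xi_i : G\to Z(\mathcal{M}(B))$ for the action on $B$ can be multiplied on the $A$-factor by $1_{\mathcal{M}(A)}$ (embedded via Lemma \ref{multlem}) to yield approximating functions for the diagonal action. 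Since $l^\infty(G)$ is commutative and hence nuclear, applying this with $B = l^\infty(G)$ yields, for every $\gcstar$-algebra $A$, an isomorphism $(A\otimes_\max l^\infty(G))\rtimes_\max G \cong (A\otimes_\max l^\infty(G))\rtimes_\red G$.

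The core step is then to assemble the commutative square
\begin{equation*}
\xymatrix{
A\rtimes_\max G \ar[r] \ar[d] & (A\otimes_\max l^\infty(G))\rtimes_\max G \ar[d]^{\cong} \\
A\rtimes_\red G \ar[r] & (A\otimes_\max l^\infty(G))\rtimes_\red G
}
\end{equation*}
whose horizontal arrows are induced by the equivariant unital inclusion $a\mapsto a\otimes 1$ and whose right-hand vertical arrow is the isomorphism just obtained. Since $G$ is discrete, the bottom horizontal arrow is injective: an equivariant embedding of $G$-$C^*$-algebras extends to an injection on reduced crossed products, as one sees by restricting the regular representation of the ambient algebra. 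Chasing $x\in A\rtimes_\alg G$ around the square then gives
\begin{equation*}
\|x\|_{\max,l^\infty(G)} = \|x\otimes 1\|_{(A\otimes_\max l^\infty(G))\rtimes_\max G} = \|x\otimes 1\|_{(A\otimes_\max l^\infty(G))\rtimes_\red G} = \|x\|_\red,
\end{equation*}
where the first equality is the definition of the $\max$-$l^\infty(G)$-norm (Definition \ref{cps2}), the second uses the isomorphism from step two, and the third uses injectivity of the bottom horizontal arrow. Since these two norms on $A\rtimes_\alg G$ coincide, the completions coincide, which is the desired conclusion.

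The main technical obstacle is the transfer step, i.e.\ checking that amenability of the $G$-action on $l^\infty(G)$ suffices to force weak containment for $(A\otimes_\max l^\infty(G))\rtimes G$ for \emph{arbitrary} $\gcstar$-algebras $A$, including non-nuclear ones. I would handle this by citing Anantharaman--Delaroche's work on amenable actions (and the related exposition by Brown--Ozawa, or Buss--Echterhoff--Willett) rather than reproving it, emphasizing that nuclearity of $l^\infty(G)$ makes the maximal and minimal tensor products with $A$ agree, so the approximating positive-type functions on the $l^\infty(G)$-factor lift unambiguously to the diagonal action on the tensor product.
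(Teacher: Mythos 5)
Your proposal is correct and follows essentially the same route as the paper: both reduce the statement to showing $(A\otimes l^\infty(G))\rtimes_\max G = (A\otimes l^\infty(G))\rtimes_\red G$, both obtain this from Ozawa's theorem that exactness of $G$ gives amenability of the translation action on $l^\infty(G)$ (equivalently on $\beta G$) together with the Anantharaman--Delaroche/Brown--Ozawa result that for a $G$-$l^\infty(G)$-algebra over an amenable $G$-space the maximal and reduced crossed products coincide, and both then use injectivity of $A\rtimes_\red G\to (A\otimes l^\infty(G))\rtimes_\red G$ to identify the image with the reduced crossed product. Your write-up merely makes explicit the final norm chase that the paper compresses into ``which will suffice,'' relying on the same injectivity of reduced crossed products on equivariant inclusions already recorded in Lemma \ref{corfunc}.
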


\begin{proof}
Let $A$ be a $\gcstar$-algebra.  We will show that $(A\otimes l^\infty(G))\rtimes_{\max}G=(A\otimes l^\infty(G))\rtimes_\red G$, which will suffice to complete the proof.  The main result of \cite{Ozawa:2000th} (compare also \cite{Guentner:2022hc}) shows that the action of $G$ on its Stone-\v{C}ech compactification $\beta G$ is amenable.  However, the Stone-\v{C}ech compactification of $G$ is the spectrum of $l^\infty(G)$ and $A\otimes l^\infty(G)$ is a $G$-$l^\infty(G)$ algebra in the sense of \cite[Definition 5.2]{Anantharaman-Delaroche:2002ij}, so \cite[Theorem 5.3]{Anantharaman-Delaroche:2002ij} (see also \cite[Theorem 4.4.3]{Brown:2008qy} for a slightly easier proof specific to the case that $G$ is discrete) implies the desired result.
\end{proof}

We suspect a similar result holds for a general locally compact group (with $C_{ub}(G)$ replacing $l^\infty(G)$).  To adapt the proof above, one would need an analog of the equivalence of exactness and amenability of the action of $G$ on the spectrum of $l^\infty(G)$ to hold in the non-discrete case; this seems likely, but is does not appear to be known at present.

\section{Proving the conjecture}\label{bcproof}

In this section, we consider conditions under which the
Baum-Connes conjecture with coefficients in a $\gcstar$-algebra
$A$ is true for exact and Morita compatible crossed products and, in
particular, when the $\E$-Baum-Connes conjecture is true.  This is
certainly the case when $G$ is exact and the usual Baum-Connes
conjecture for $G$ with coefficients in $A$ is valid.  However, we are
interested in the \emph{non-exact} Gromov monster groups.  We shall
study actions of these groups with the Haagerup property as in the
following definition (adapted from work of Tu \cite[Section
3]{Tu:1999bq}).

\begin{definition}\label{actatmen}
Let $G$ be a locally compact group acting on the right on a locally
compact Hausdorff topological space $X$.  A function $h:X\times G\to\R$ is
of \emph{conditionally negative type} if it satisfies the
following conditions: 
\begin{enumerate}[(i)]
\item the restriction of $h$ to $X\times \{e\}$ is zero;
\item for every $x\in X$, $g\in G$, we have that $h(x,g)=h(xg,g^{-1})$;
\item for every $x$ in $X$ and any finite subsets
  $\{g_1,...,g_n\}$ of $G$ and $\{t_1,...,t_n\}$ of $\R$ such that
  $\sum_it_i=0$ we have that  
  \begin{equation*}
    \sum_{i,j=1}^nt_it_jh(xg_i,g_i^{-1}g_j)\leq 0. 
  \end{equation*}
\end{enumerate}
The action of $G$ on $X$ is \emph{a-T-menable} if there
exists a continuous conditionally negative type function $h$
that is \emph{locally proper}: for any compact $K\subseteq X$ the
restriction of $h$ to the set  
\begin{equation*}
  \{(x,g)\in X\times G~|~x\in K,xg\in K\}
\end{equation*}
is a proper function.
\end{definition}

In the precise form stated, the following theorem is essentially due
to Tu \cite{Tu:1999bq}.  See also Higson and Guentner \cite[Theorem
3.12]{Higson:2004la}, Higson \cite[Theorem 3.4]{Higson:2000bl} and Yu
\cite[Theorem 1.1]{Yu:200ve} for closely related results. 

\begin{theorem}\label{bccor}
  Let $G$ be a second countable locally compact group acting
  a-T-menably on a second countable locally compact space $X$.  
  The $\tau$-Baum-Connes assembly map
\begin{equation*}
    K_*^{top}(G;C_0(X))\to K_*(C_0(X)\rtimes_\tau G) 
\end{equation*}
is an isomorphism for every exact and Morita compatible crossed product
$\tau$. 
\end{theorem}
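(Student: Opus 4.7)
The plan is to adapt the Dirac--dual Dirac argument of Higson--Kasparov \cite{Higson:2001eb} and Tu \cite{Tu:1999bq} to the equivariant $E$-theory framework developed earlier in the paper, and to then push the result through the $\tau$-crossed product via the descent functor of Theorem \ref{descent}.

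First, I would use the locally proper conditionally negative type function $h$ on $X\times G$ to build a proper $G$-$\cstar$-algebra $\mathcal{A}$ (essentially an inductive limit of Bott/Clifford algebras on the finite-dimensional real Hilbert spaces extracted from $h$ by the GNS construction) together with equivariant $E$-theory classes
\begin{equation*}
    \beta \in E^G\bigl(C_0(X),\,C_0(X)\otimes \mathcal{A}\bigr)
    \qquad\text{and}\qquad
    \alpha \in E^G\bigl(C_0(X)\otimes \mathcal{A},\,C_0(X)\bigr)
\end{equation*}
satisfying $\alpha\circ\beta = 1_{C_0(X)}$ in $E^G(C_0(X),C_0(X))$. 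This construction, including the rotation-homotopy verification of $\alpha\circ\beta = 1$, is essentially carried out by Tu in the closely related setting of equivariant groupoid $KK$-theory (applied to the transformation groupoid $X\rtimes G$); only minor modifications are needed to produce the classes in equivariant $E$-theory. I would invoke Tu's argument rather than redo it.

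Next, I would verify the $\tau$-Baum--Connes conjecture with coefficients $C_0(X)\otimes \mathcal{A}$. Properness of $\mathcal{A}$, witnessed by an equivariant map from $C_0(Y)$ with $Y$ a proper $G$-space, makes the diagonal $G$-space $X\times Y$ control the algebra $C_0(X)\otimes \mathcal{A}$, and the diagonal action on $X\times Y$ is proper because the action on $Y$ is. For proper $\gcstar$-algebras the canonical map $B\rtimes_{\max}G\to B\rtimes_\red G$ is an isomorphism; hence $B\rtimes_\tau G$ coincides with both, and the $\tau$-assembly map with coefficients $C_0(X)\otimes \mathcal{A}$ is identified with the maximal assembly map, which is known to be an isomorphism on proper coefficient algebras by the standard topological description in \cite[Section 10]{Guentner:2000fj}.

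Finally, I would conclude by a retraction argument. Theorem \ref{descent} makes $A\mapsto K_*(A\rtimes_\tau G)$ a functor from the equivariant $E$-theory category to abelian groups, and the $\tau$-assembly map is a natural transformation from $K_*^{top}(G;-)$ into it. Naturality applied to $\alpha$ and $\beta$, combined with $\alpha\circ\beta = 1_{C_0(X)}$, exhibits $\mu_{C_0(X)}$ as a retract of $\mu_{C_0(X)\otimes \mathcal{A}}$; since the latter is an isomorphism, so is the former. The principal technical obstacle lies squarely in the first step: one must transport Tu's Dirac--dual Dirac machinery from equivariant $KK$-theory into equivariant $E$-theory while preserving the rotation-trick identity $\alpha\circ\beta = 1_{C_0(X)}$, and check that the resulting classes descend correctly under the $\tau$-crossed product functor of Theorem \ref{descent}. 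Everything after that reduces to naturality and the known properness statement.
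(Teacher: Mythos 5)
Your proposal is correct and takes essentially the same route as the paper: Tu's Dirac--dual-Dirac elements for the transformation groupoid $X\rtimes G$, transported into equivariant $E$-theory, combined with the fact that all crossed products coincide on the proper coefficient algebra (so that assembly there is the known isomorphism), and a retraction argument via naturality and the descent functor of Theorem \ref{descent}. The only cosmetic difference is that you package the proper algebra as $C_0(X)\otimes\mathcal{A}$ while the paper takes $\mathcal{A}$ to be a proper $X\rtimes G$-algebra outright, and the paper cites Chabert--Echterhoff for the proper-coefficient isomorphism.
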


\begin{proof}
In the terminology of \cite[Section 3]{Tu:1999bq}, Definition
\ref{actatmen} says that the transformation groupoid $X\rtimes G$
admits a locally proper, negative type function, and therefore by
\cite[Proposition 3.8]{Tu:1999bq} acts properly by isometries  on a
field of Hilbert spaces.   It then follows from \cite[Th\'{e}or\`{e}me
1.1]{Tu:1999bq} and the discussion in \cite[last paragraph of
introduction]{Tu:1999bq} that there exists a proper $X\rtimes G$ algebra\footnote{Precisely, this means that there is a locally compact proper $G$-space $Z$, an equivariant $*$-homomorphism from $C_0(Z)$ into the center of the multiplier algebra of $A$, and an equivariant, open, and continuous map $Z\to X$.} $\mathcal{A}$ built from this action on a field of Hilbert spaces and equivariant $E$-theory elements
%
%
\begin{equation*}
  \alpha\in E^G(\mathcal{A}, C_0(X)),~~~\beta\in E^G(C_0(X), \mathcal{A})
\end{equation*}
such that 
\begin{equation}\label{ddd}
\alpha\circ \beta=1~~ \text{ in }  ~~E^G(C_0(X),C_0(X)).
\end{equation}
(Actually, Tu works in the framework of equivariant $KK$-theory in the reference \cite{Tu:1999bq} used above.
Using the natural transformation to equivariant $E$-theory, we obtain
the result as stated here.)

Consider now the following diagram, where the vertical maps are
induced by $\alpha$, $\beta$ above, $E$-theory compositions, and the
descent functor from Theorem \ref{descent}; and the horizontal maps are
assembly maps 
$$
\xymatrix{ K_*^{top}(G;C_0(X)) \ar[d]^{\beta_*} \ar[r] & K_*(C_0(X)\rtimes_\tau G)  \ar[d]^{\beta_*} \\ 
K_*^{top}(G;\mathcal{A}) \ar[r] \ar[d]^{\alpha_*} & K_*(\mathcal{A}\rtimes_\tau G) \ar[d]^{\alpha_*} \\ 
K_*^{top}(G;C_0(X)) \ar[r] & K_*(C_0(X)\rtimes_\tau G) }.
$$
The diagram commutes as descent is a functor and $E$-theory
compositions are associative.  Moreover, the vertical compositions are
isomorphisms by line \eqref{ddd}.  Further all crossed products are
the same for a proper action, whence the central horizontal map
identifies with the usual assembly map, and so is an isomorphism by
\cite[Th\'{e}or\`{e}me 2.2]{Chabert:2001ye}.  Hence from a diagram
chase the top and bottom maps are isomorphisms, which is the desired
result. 
\end{proof}

\begin{rem}\label{oldnewrem}
  The Baum-Connes conjecture with coefficients is true for
  a-T-menable groups when defined with either the maximal or reduced
crossed product \cite{Higson:2001eb}.  The argument above shows that this extends to any exact and Morita compatible crossed product.
\end{rem}

Based on this remark, it may be tempting to believe that for
a-T-menable groups the Baum-Connes conjecture is true with values in
\emph{any} `intermediate completion' of the algebraic crossed product
$A\rtimes_{\alg} G$.  This is false (even if $A=\C$), as the following example shows.

\begin{example}\label{intrem}
Let $G$ be an a-T-menable group that is not amenable, for example a free group or $SL(2,\R)$. Let $C^*_S(G)$ denote the completion of $C_c(G)$ in the direct sum $\lambda\oplus 1$ of the regular and trivial representations.\footnote{$C^*_S(G)$ is the \emph{Brown-Guentner crossed product} $\C\rtimes_{BG,S}G$ associated to the subset $S=\widehat{G_r}\cup\{1\}$ of the unitary dual: see Appendix \ref{cpapp}.}

As $G$ is not amenable the trivial representation is isolated in
the spectrum of $C^*_S(G)$, whence this $C^*$-algebra splits as a
direct sum 
$$
C^*_S(G)=C^*_\red(G)\oplus \C.
$$
Let
$p\in C^*_S(G)$ denote the unit of the copy of $\C$ in this decomposition, a so-called
\emph{Kazhdan projection}.   The class $[p]\in K_0(C^*_r(G))$ generates a copy of $\Z$, which is precisely the kernel of the map on $K$-theory induced by the quotient map $C^*_S(G)\to C^*_\red(G)$.

The Baum-Connes conjecture is true for $G$ by a-T-menability whence $[p]$ is not in the image of the Baum-Connes assembly map
$$
\mu:K_*^\text{top}(G)\to K_*(C^*_S(G)),
$$
and so in particular the assembly map is not surjective.  The discussion in Examples \ref{badex} develops this a little further.
\end{example}

\section{An example coming from Gromov monster groups}\label{hasec}

A Gromov monster group $G$ is a discrete group whose Cayley graph contains an expanding sequence of graphs (an \emph{expander}), in some weak sense.  The geometric properties of expanders can be used to build a commutative $\gcstar$-algebra $A$ for which the Baum-Connes conjecture with coefficients fails.  In fact, Gromov monster groups are the only known source of such failures.

In this section we show that for some Gromov monster groups there is a separable commutative 
$\gcstar$-algebra $B$ for which the $\E$-Baum-Connes
conjecture is true, but the usual version using the reduced crossed product is false.  The existence of such a $B$ can be attributed to two properties: failure of exactness, and the presence of a-T-menability.  The main result of this section is Theorem \ref{hathe}, which proves a-T-menability of a certain action.

The ideas in this section draw on many sources.  The existence of Gromov monster groups was indicated by Gromov
\cite{Gromov:2003gf}.  More details were subsequently provided by 
Arzhantseva and Delzant \cite{Arzhantseva:2008bv}, and Coulon \cite{Coulon:2013fk}.  The version of the construction we use in this paper is due to Osajda \cite{Osajda:2014ys}.
The idea of using Gromov monsters to construct counterexamples to the Baum-Connes conjecture is due to Higson, Lafforgue and Skandalis   \cite[Section~7]{Higson:2002la}.  The construction of counterexamples we use in this section comes from work of Yu and the third author \cite[Section~8]{Willett:2010ud},  \cite{Willett:2010zh}. The present exposition is inspired by subsequent work of Finn-Sell and Wright \cite{Finn-Sell:2012fk}, of Chen, Wang and Yu \cite{Chen:2012uq}, and of Finn-Sell \cite{Finn-Sell:2013yq}.  Note also that Finn-Sell \cite{Finn-Sell:2014uq} has obtained analogs of Theorem \ref{hathe} below using a different method.





In order to discuss a-T-menability, we will be interested in kernels with the properties in the next definition.  

\begin{definition}\label{cnd}
Let $X$ be a set, and $k:X\times X\to \R_+$ a function (a \emph{kernel}).  

The kernel $k$ is \emph{conditionally negative definite} if
\begin{enumerate}[(i)]
\item $k(x,x)=0$, for every $x\in X$;
\item $k(x,y)=k(y,x)$, for every $x$, $y\in X$;
\item for every subset $\{x_1,...,x_n\}$ of $X$ and every subset $\{t_1,...,t_n\}$ of $\R$ such that
  $\sum_{i=1}^n t_i=0$ we have  
  \begin{equation*}
    \sum_{i,j=1}^nt_it_jk(x_i,x_j)\leq 0. 
  \end{equation*}
\end{enumerate}

Assume now that $X$ is a metric space.   The kernel $k$ is \emph{asymptotically conditionally negative definite} if conditions (i) and (ii) above hold, and the following weak version of condition (iii) holds:
\begin{enumerate}
\item[(iii)'] for every $r>0$ there exists a bounded subset $K=K(r)$ of $X$ such that for every subset $\{x_1,...,x_n\}$ of $X\setminus K$ of diameter 
 at most $r$, and every subset $\{t_1,...,t_n\}$ of $\R$ such that
  $\sum_{i=1}^n t_i=0$ we have  
  \begin{equation*}
    \sum_{i,j=1}^nt_it_jk(x_i,x_j)\leq 0. 
  \end{equation*}
\end{enumerate}

Continuing to assume that $X$ is a metric space, a kernel $k$ is \emph{proper} if for each $r>0$
$$
\sup_{d(x,y)\leq r}k(x,y)
$$
is finite, and if 
$$
\inf_{d(x,y)\geq r}k(x,y)
$$
tends to infinity as $r$ tends to infinity.

\end{definition} 

\begin{rem}
Using techniques similar to those in \cite{Finn-Sell:2013yq} (compare also \cite{Willett:2014ab}), one can show that if $X$ admits a \emph{fibered coarse embedding into Hilbert space} as in \cite[Section 2]{Chen:2012uq}, then $X$ admits a proper, asymptotically conditionally negative definite kernel.  One can also show directly that if $X$ admits a proper, asymptotically conditionally negative definite kernel, then the restriction to the boundary of the coarse groupoid of $X$ has the Haagerup property as studied in  \cite{Finn-Sell:2012fk}.  We will not need these properties, however, so do not pursue this further here.
\end{rem}

Let now $X$ and $Y$ be metric spaces.  A map $f:X\to Y$ is a \emph{coarse
  embedding} if there exist non-decreasing functions
$\rho_-$ and $\rho_+$ from $\R_+$ to $\R_+$ such that for all $x_1,x_2\in X$, 
\begin{equation*}
  \rho_-(d(x_1,x_2))\leq d(f(x_1),f(x_2))\leq 
      \rho_+(d(x_1,x_2))   
\end{equation*}
and such that $\rho_-(t)$ tends to infinity as $t$ tends to infinity.  A
coarse embedding $f:X\to Y$ is a \emph{coarse equivalence} if in
addition there exists $C\geq 0$ such that every point of $Y$ is
distance at most $C$ from a point of $f(X)$.  Coarse equivalences have `approximate inverses': given a coarse equivalence $f:X\to Y$ there is a coarse equivalence $g:Y\to X$ such that $\sup_{x\in X}d(x,g(f(x)))$ and $\sup_{y\in Y}d(y,f(g(y)))$ are finite. 

We record the following lemma for later use; the proof is a series of routine checks.

\begin{lemma}\label{cndlem}
Let $X$ and $Y$ be metric spaces, and $f:X\to Y$ a coarse embedding.  If $k$ is a proper, asymptotically conditionally negative definite kernel on $Y$, then the pullback kernel $(f^*k)(x,y):=k(f(x),f(y))$ on $X$ is proper and asymptotically conditionally negative definite.  \qed
\end{lemma}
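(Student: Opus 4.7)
The plan is to verify each of the defining conditions for the pulled-back kernel $f^*k$ in turn, exploiting the two estimates $\rho_-(d(x_1,x_2)) \leq d(f(x_1),f(x_2)) \leq \rho_+(d(x_1,x_2))$ provided by the coarse embedding. Conditions (i) and (ii) of Definition \ref{cnd} for $f^*k$ are immediate from the corresponding conditions for $k$, so the content lies in verifying properness and condition (iii)'.

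For properness of $f^*k$, I would use the two halves of the coarse embedding inequality separately. If $d(x,y) \leq r$, then $d(f(x),f(y)) \leq \rho_+(r)$, so
\[
\sup_{d(x,y)\leq r} (f^*k)(x,y) \;\leq\; \sup_{d(y_1,y_2)\leq \rho_+(r)} k(y_1,y_2) \;<\; \infty
\]
by properness of $k$. Conversely, if $d(x,y) \geq r$, then $d(f(x),f(y)) \geq \rho_-(r)$, and since $\rho_-(r) \to \infty$ as $r \to \infty$, the corresponding infimum for $f^*k$ is bounded below by $\inf_{d(y_1,y_2) \geq \rho_-(r)} k(y_1,y_2)$, which tends to infinity.

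For condition (iii)', given $r > 0$ I would apply (iii)' for $k$ with the parameter $\rho_+(r)$ to obtain a bounded set $K' \subseteq Y$, and then set $K := f^{-1}(K') \subseteq X$. The key observation is that $K$ is bounded in $X$: if $x_1, x_2 \in f^{-1}(K')$, then $\rho_-(d(x_1,x_2)) \leq d(f(x_1),f(x_2)) \leq \diam(K') < \infty$, and since $\rho_-(t) \to \infty$ as $t \to \infty$, the distances $d(x_1,x_2)$ must stay below some fixed bound, so $K$ has finite diameter (if it is non-empty; otherwise it is trivially bounded). Now, given any finite tuple $x_1,\dots,x_n \in X \setminus K$ of diameter at most $r$ and scalars $t_1,\dots,t_n$ with $\sum t_i = 0$, the images $f(x_1),\dots,f(x_n)$ lie in $Y \setminus K'$ and have diameter at most $\rho_+(r)$, so condition (iii)' for $k$ gives $\sum_{i,j} t_i t_j (f^*k)(x_i,x_j) = \sum_{i,j} t_i t_j k(f(x_i),f(x_j)) \leq 0$.

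The only step that requires care is the boundedness of $K = f^{-1}(K')$, which is precisely where the lower bound $\rho_-$ together with the fact $\rho_-(t) \to \infty$ is used; everything else is a direct substitution. I expect this to be the main (though still minor) obstacle.
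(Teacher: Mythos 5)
Your proof is correct; the paper itself omits the argument entirely (stating only that "the proof is a series of routine checks"), and your verification — using $\rho_+$ for the upper properness bound and the diameter bound in (iii)', and $\rho_-$ together with $\rho_-(t)\to\infty$ for the lower properness bound and the boundedness of $f^{-1}(K')$ — is exactly the intended routine check.
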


We are mainly interested in metric spaces that are built from graphs.  We identify a finite graph with its vertex set, and equip it with the edge metric: the distance between vertices $x$ and $y$
is the smallest number $n$ for which there exists a sequence
\begin{equation*}
  x=x_0,x_1,...,x_n=y 
\end{equation*}
in which consecutive pairs span an edge.  

\begin{definition}\label{boxs}
Let $(X_n)$ be a sequence of finite graphs such that
\begin{enumerate}[(i)]
\item each $X_n$ is non-empty, finite, and connected;
\item there exists a $D$ such that all vertices have degree at most $D$.
\end{enumerate}
Equip the disjoint union
$X=\sqcup_n X_n$ with a metric that restricts to the
edge metric on each $X_n$ and in addition satisfies
\begin{equation*}
  d(X_n,\sqcup_{n\neq m}X_m)\to\infty \text{ as } n\to\infty.
\end{equation*}
The exact choice of metric does not matter for us: the identity map on $X$ is a coarse equivalence between any two choices of metric satisfying these conditions.  The metric space $X$ is the \emph{box space} associated to the sequence $(X_n)$.  

The \emph{girth} of a graph $X$ is the length of the shortest non-trivial cycle in $X$, and infinity if no non-trivial cycles exist.  A box space $X$ built from a sequence $(X_n)$ as above has \emph{large girth} if the girth of $X_n$ tends to infinity as $n$ tends to infinity.

A box space $X$ associated to a sequence $(X_n)$ is an \emph{expander} if there exists $c>1$ such that for all $n$ and all subsets $A$ of $X_n$ with $|A|\leq |X_n|/2$, we have 
$$
\frac{|\{x\in X_n~|~d(x,A)\leq 1\}|}{|A|}\geq c.
$$
\end{definition}




\begin{theorem}\label{girthhaag}
Let $X$ be a large girth box space as in Definition \ref{boxs}.  Then the distance function on $X$ is a proper, asymptotically conditionally negative definite kernel.
\end{theorem}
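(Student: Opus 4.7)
The plan is to verify the three conditions in Definition~\ref{cnd} in turn. Properness of $d$ is automatic: $\sup_{d(x,y)\leq r} d(x,y) = r$ and $\inf_{d(x,y)\geq r} d(x,y) = r \to \infty$, and conditions (i), (ii) of Definition~\ref{cnd} are immediate from the metric axioms. So the only real work is verifying the asymptotic conditional negative definiteness, condition (iii)'.

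Given $r > 0$, I would choose $N = N(r)$ large enough that for all $n > N$ the component $X_n$ is at distance more than $r$ from $\bigsqcup_{m \neq n} X_m$ (possible from the definition of the box-space metric) and the girth of $X_n$ exceeds $2r+1$ (possible by the large girth hypothesis). Setting $K = K(r) = \bigsqcup_{n \leq N} X_n$, which is bounded in $X$, any finite subset $\{x_1,\dots,x_m\} \subseteq X \setminus K$ of diameter at most $r$ is then forced to lie inside a ball $\overline{B}(x_1, r)$ of a single component $X_n$ with $n > N$.

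The key step is the observation that, by the girth bound, this ball is isometric to a ball in a tree. Concretely, any cycle of $X_n$ has length greater than $2r+1$ and so cannot fit inside $\overline{B}(x_1, 2r)$; hence the induced subgraph on $\overline{B}(x_1,r)$ contains no cycles, and the distance in $X_n$ between any two of the $x_i$ coincides with the distance along the unique local geodesic (any shortcut outside the ball, glued to the local tree geodesic, would yield a cycle of length at most $2r+1$, a contradiction). So it suffices to show that the edge metric $d_T$ on any tree $T$ is conditionally negative definite.

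For this classical step, I would fix a root $v_0 \in T$ and consider the map $\phi \colon V(T) \to \ell^2(E(T))$ sending $v$ to the indicator function of the edge set of the geodesic $[v_0, v]$. Since the symmetric difference of $[v_0, v]$ and $[v_0, w]$ is exactly the edge set of $[v, w]$, one obtains $\|\phi(v) - \phi(w)\|_2^2 = d_T(v, w)$, and any kernel of this form is conditionally negative definite via the polarization identity $\sum_{i,j} t_i t_j \|\phi(x_i) - \phi(x_j)\|^2 = -2\|\sum_i t_i \phi(x_i)\|^2 \leq 0$ whenever $\sum_i t_i = 0$. The main conceptual content is the girth-to-tree reduction; I do not foresee any serious obstacle beyond correctly quantifying the girth threshold.
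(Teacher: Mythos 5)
Your proof is correct and follows essentially the same route as the paper's: reduce condition (iii)' to a single component $X_n$ using the spacing of the box space, use large girth to see that a subset of diameter at most $r$ carries a tree metric, and invoke the classical fact (the paper's Lemma \ref{tree}) that tree metrics are conditionally negative definite via the edge-indicator embedding into $\ell^2$. The only cosmetic difference is that the paper lifts the finite set isometrically to the universal cover $T_n\to X_n$ rather than working with the induced subgraph on $\overline{B}(x_1,r)$; note only that your threshold ``girth $>2r+1$'' is too small to justify as stated the claims that no cycle fits inside $\overline{B}(x_1,2r)$ and that no shortcut leaves the ball (a shortcut of length $\leq r$ glued to an in-ball path of length $\leq 2r$ only produces a cycle of length $\leq 3r$), but this is harmless --- replace the threshold by, say, girth $>4r$, since the argument only uses that the girth tends to infinity.
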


For the proof of this theorem, we shall require the following well
known lemma \cite[Section~2]{Julg:1984rr}.  For convenience, we sketch
a proof.

\begin{lemma}\label{tree}
Let $T$ be \textup{(}the vertex set of\textup{)} a tree.  The edge
metric is conditionally negative definite, when viewed as a kernel 
$d:T\times T\to \R_+$.
\end{lemma}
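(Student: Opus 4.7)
The plan is to exhibit an isometric embedding of $(T, \sqrt{d})$ into a real Hilbert space, since any kernel of the form $k(x,y) = \|f(x)-f(y)\|^2$ for a map $f$ into Hilbert space is automatically conditionally negative definite.

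First, I would fix a base vertex $x_0 \in T$ and let $H = \ell^2_{\R}(E(T))$, where $E(T)$ is the edge set of $T$. For each vertex $x$, let $[x_0,x]$ denote the unique geodesic edge-path from $x_0$ to $x$, and define
\[
   f : T \to H,\qquad f(x) = \sum_{e \in [x_0,x]} \delta_e,
\]
where $\delta_e$ is the indicator of the edge $e$. The key geometric step is the observation that, in a tree, the (edge set of the) geodesic from $x$ to $y$ equals the symmetric difference $[x_0,x] \,\triangle\, [x_0,y]$, because any common initial segment from $x_0$ cancels out and the remaining two arcs concatenate to the unique $x$-to-$y$ geodesic. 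Since $f(x)-f(y)$ is the signed indicator of this symmetric difference, we get
\[
   \|f(x)-f(y)\|^2 \;=\; |[x_0,x]\,\triangle\,[x_0,y]| \;=\; d(x,y).
\]

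Second, I would verify the standard fact that $k(x,y) := \|f(x)-f(y)\|^2$ is conditionally negative definite. Symmetry and vanishing on the diagonal are immediate, and for $t_1,\dots,t_n\in\R$ with $\sum_i t_i=0$, expanding the inner products gives
\[
   \sum_{i,j} t_i t_j \|f(x_i)-f(x_j)\|^2 \;=\; -2\Bigl\|\sum_i t_i f(x_i)\Bigr\|^2 \;\leq\; 0,
\]
using $\sum_i t_i = 0$ to kill the two cross-terms involving $\|f(x_i)\|^2$. Combined with the identity $\|f(x)-f(y)\|^2 = d(x,y)$, this yields the lemma.

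There is no real obstacle here: the only substantive point is the tree-specific identity relating the geodesic from $x$ to $y$ to the symmetric difference of the geodesics to a base point, and this is precisely where the absence of cycles is used (on a graph with cycles, the analogous map would not be isometric). The Hilbert space expansion used in the second step is a routine check and is independent of $T$.
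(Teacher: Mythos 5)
Your proposal is correct and follows essentially the same route as the paper: fix a base vertex, map each vertex to the characteristic function in $\ell^2$ of the edges along the unique geodesic from the base point, observe that $\|f(x)-f(y)\|^2=d(x,y)$, and invoke the standard fact that squared Hilbert-space distances are conditionally negative definite. Your write-up simply makes explicit the symmetric-difference observation and the routine expansion that the paper leaves as a sketch.
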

\begin{proof}
Let $\ell^2$ denote the Hilbert space of square summable functions on the set of
\emph{edges} in $T$.  Fix a base vertex $x_0$.  For every vertex $x$
let $\xi(x)$ be the characteristic function of those edges along the
unique no-backtrack path from $x_0$ to $x$.  The result is a routine
calculation starting from the observation that
\begin{equation*}
  \|\xi(x)-\xi(y)\|^2=d(x,y),
\end{equation*}
for every two vertices $x$ and $y$.
\end{proof}

\begin{proof}[Proof of Theorem \ref{girthhaag}]
Let $k(x,y)=d(x,y)$.  Properness and conditions (i) and (ii) from the definition of asymptotically conditionally negative definite are trivially satisfied, so it remains to check condition (iii)'.

Given $r>0$, let $N$ be large enough that the following conditions
are satisfied:
\begin{enumerate}[(a)]
\item if $n>N$ then $d(X_n,\sqcup_{m\neq n}X_m)>r$;
\item if $n>N$ then the girth of $X_n$ is at least $2r$.
\end{enumerate}
The force of (b) is that if $T_n$ is the universal cover of $X_n$
then the covering map $T_n\to X_n$ is an isometry on sets of diameter
$r$ or less.  Let $K=X_1\sqcup \dots\sqcup X_N$.  It now suffices to
show that $d$ is conditionally negative definite when restricted to a
finite subset $F$ of $X\setminus K$ of diameter at most $r$.  But,
such a subset necessarily belongs entirely to some $X_n$, and the
covering map $T_n\to X_n$ admits an isometric splitting over $F$.
Thus, restricted to $F\times F$, the metric $d$ is the pullback of the
distance function on $T_n$ which is conditionally negative definite by
the previous lemma. 
\end{proof}

Let $G$ be a finitely generated group.  Fix a word length $\ell$
and associated left-invariant metric on $G$; the following definition
is independent of the choice of length function.

\begin{definition}\label{gm} 
The group $G$ is a \emph{special Gromov monster} if there exists a large girth expander box space $X$ as in Definition \ref{boxs} and a coarse embedding from $X$ to $G$.  
\end{definition}

Osajda \cite{Osajda:2014ys} has shown that special Gromov monsters in the sense above exist: in fact, he proves the existence of examples where the (large girth, expander) box space $X$ is isometrically embedded.  Other constructions of Gromov monster groups, including Gromov's original one, produce maps of (expander, large girth) box spaces into groups which are not (obviously) coarse embeddings: see the remarks in Section \ref{wce} below.  The restriction to coarsely embedded box spaces is the reason for the terminology `special Gromov monster' in the above.






For the remainder of this section, let $G$ be a special Gromov monster group, and let $f:X\to G$ be a coarse embedding of a large girth, expander box space into $G$.
Let $Z=f(X)\subset G$ be the image of $f$.  For each natural number
$R$, let $N_R(Z)$ be the $R$-neighborhood of $Z$ in $G$.

\begin{lemma}\label{glem}
There exists a kernel $k$ on $G$ such that for any $R\in\N$ the restriction of $k$ to $N_R(Z)$ is proper and asymptotically conditionally negative definite.  
\end{lemma}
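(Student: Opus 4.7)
The plan is to pull back the distance kernel on the large girth box space $X$ to the image $Z = f(X) \subset G$, and then to extend to all of $G$ via a closest-point projection onto $Z$. The key observation is that for each $R \in \N$ the projection has displacement uniformly bounded by roughly $R$ on $N_R(Z)$, so all the required properties on $N_R(Z)$ follow routinely from the corresponding properties on $Z$.

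First, Theorem \ref{girthhaag} supplies a proper, asymptotically conditionally negative definite kernel on $X$, namely the distance function $d_X$. Since $f: X \to Z$ is a coarse equivalence, choose a coarse inverse $g: Z \to X$ (for example, any set-theoretic section of $f$ is automatically a coarse embedding from the coarse embedding estimates for $f$). Then $k_Z := g^* d_X$ is a proper, asymptotically conditionally negative definite kernel on $Z$ by Lemma \ref{cndlem}.

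Next, choose any map $\pi: G \to Z$ with $d(y, \pi(y)) \leq d(y, Z) + 1$ for every $y \in G$, and define
\[
k(x, y) := k_Z(\pi(x), \pi(y)), \qquad x, y \in G.
\]
Conditions (i) and (ii) of Definition \ref{cnd} for $k$ on $N_R(Z)$ are inherited from $k_Z$. Fix $R \in \N$. For all $x, y \in N_R(Z)$ we have $d(x, \pi(x)), d(y, \pi(y)) \leq R + 1$, hence
\[
|d(\pi(x), \pi(y)) - d(x, y)| \leq 2(R + 1).
\]
Together with properness of $k_Z$ on $Z$, this two-sided estimate immediately yields properness of the restriction of $k$ to $N_R(Z)$. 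For condition (iii)$'$ at scale $r > 0$, let $K_Z \subset Z$ be a bounded set witnessing asymptotic conditional negative definiteness of $k_Z$ at scale $r + 2(R+1)$, and set $K := N_{R+1}(K_Z) \cap N_R(Z)$, which is bounded in $G$. Then for any finite subset $\{x_1, \dots, x_n\} \subset N_R(Z) \setminus K$ of diameter at most $r$, the projections $\{\pi(x_1), \dots, \pi(x_n)\}$ lie in $Z \setminus K_Z$ (as $\pi(x_i) \in K_Z$ would force $x_i \in N_{R+1}(K_Z) \cap N_R(Z) = K$) and have diameter at most $r + 2(R+1)$, so the cnd inequality for $k_Z$ gives $\sum_{i,j} t_i t_j k(x_i, x_j) \leq 0$ whenever $\sum_i t_i = 0$.

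The main technical point, and the only mild subtlety, is that a single global kernel $k$ on $G$ must satisfy the required properties simultaneously on every $N_R(Z)$; this is not an obstacle because $k$ is defined once and for all via the fixed projection $\pi$, and the constants in the verification at parameter $R$ depend on $R$ only through the uniform displacement bound $R + 1$.
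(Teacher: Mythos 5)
Your proposal is correct and follows essentially the same route as the paper: pull back the distance kernel on $X$ along a coarse equivalence $Z\to X$ and along bounded-displacement projections $N_R(Z)\to Z$, the only cosmetic difference being that the paper builds a compatible family $p_R:N_R(Z)\to Z$ inductively and cites Lemma \ref{cndlem} for each composite, whereas you fix one near-closest-point projection $\pi:G\to Z$ and verify properness and condition (iii)$'$ by hand. The hand verification is sound but could be replaced by observing that $g\circ\pi|_{N_R(Z)}$ is a coarse embedding and invoking Lemma \ref{cndlem} directly.
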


\begin{proof}
Let $p_0:Z\to Z$ be the identity map.  For $R\in\N$ inductively choose $p_R:N_R(Z)\to Z$ by stipulating that $p_{R+1}:N_{R+1}(Z)\to Z$ extends $p_R$, and satisfies $d(p_{R+1}(x),x)\leq R+1$ for all $x\in N_{R+1}(Z)$.  Note that each $p_R$ is a coarse equivalence.  Let $g:Z\to X$ be any choice of coarse equivalence, and let $d$ be the distance function on $X$, so $d$ has the properties in Theorem \ref{girthhaag}.

For each $R$, let $k_R$ be the pullback kernel $(g\circ p_R)^*d$, which Lemma \ref{cndlem} implies is proper and asymptotically conditionally negative definite.   The choice of the functions $p_R$ implies that for $R>S$, the kernel $k_R$ extends $k_S$, and so these functions piece together to define a kernel $k$ on $\cup_R N_R(Z)=G$. 
\end{proof}

We will now construct an a-T-menable action of $G$.

For each natural number $R$, let $\overline{N_R(Z)}$ be the closure of $N_R(Z)$ in the Stone-\v{C}ech
compactification $\beta G$ of $G$.  Let
\begin{equation*}
  Y=\left(\bigcup_{R\in\N}\overline{N_R(Z)}\right)\cap \partial G,
\end{equation*}
i.e.\ $Y$ is the intersection of the open subset
$\cup_{R\in\N}\overline{N_R(Z)}\subset\beta G$ with the Stone-\v{C}ech corona $\partial G$. 

Next we define an action of $G$ on $Y$.  This is best done by
considering the associated $C^*$-algebras of continuous functions.
The $C^*$-algebra of continuous functions on
$\cup_{R\in\N}\overline{N_R(Z)}$ naturally identifies with
\begin{equation*}
  A=\overline{\bigcup_{R\in\N}\ell^\infty(N_R(Z))},
\end{equation*}
the $C^*$-subalgebra of $\ell^\infty(G)$ generated by all the bounded
functions on the $R$-neighbourhoods of $Z$.  For every $x$ and
$g$ in $G$ we have
\begin{equation*}
  d(x,xg)=\ell(g), 
\end{equation*}
so that the right action of $G$ on itself gives rise to an action on
$\ell^\infty(G)$ that preserves $A$.  In this way $A$ is a
$\gcstar$-algebra.  Note that $A$ contains $C_0(G)$ as a $G$-invariant
ideal, and $Y$ identifies naturally with the maximal ideal space of
the $\gcstar$-algebra $A/C_0(G)$.


\begin{theorem}\label{hathe}
The action of $G$ on $Y$ is a-T-menable.
\end{theorem}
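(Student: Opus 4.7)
The plan is to build a conditionally negative type function $h\colon Y\times G\to\R$ directly from the kernel $k$ of Lemma~\ref{glem}. My candidate is $h(y,g):=\widetilde{k_g}(y)$, where $k_g(x):=k(x,xg)$ for $x\in G$ and $\widetilde{k_g}$ is its continuous extension across $\beta G$. Since $d_G(x,xg)=\ell(g)$ is fixed, and $x\in N_R(Z)$ implies $xg\in N_{R+\ell(g)}(Z)$, properness of $k$ on the latter neighbourhood shows $k_g|_{N_R(Z)}$ is bounded; it therefore extends uniquely and continuously to the clopen set $\overline{N_R(Z)}\subseteq\beta G$. Letting $R$ vary gives a continuous function on $\bigcup_R\overline{N_R(Z)}$ whose restriction to $Y$ is the promised $h(\cdot,g)$. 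Joint continuity is automatic because $G$ is discrete.

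Conditions (i) and (ii) of Definition~\ref{actatmen} are formal consequences of the extension: $k(x,x)=0$ gives $h(y,e)=0$, and symmetry of $k$ gives $k_g(x)=k_{g^{-1}}(xg)$ and hence $h(y,g)=h(yg,g^{-1})$. Condition (iii) is the substantive step and is where the full force of Lemma~\ref{glem} is used. Fix $y\in Y$, elements $g_1,\dots,g_n\in G$, and scalars $t_i$ with $\sum t_i=0$. Choose $R$ with $y\in\overline{N_R(Z)}$, set $R'=R+\max_i\ell(g_i)$ and $r=\max_{i,j}\ell(g_i^{-1}g_j)$, and apply the asymptotic conditional negative definiteness of $k|_{N_{R'}(Z)}$ to extract a bounded subset $K\subseteq G$ outside which the required inequality holds on diameter-$r$ subsets. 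Approximate $y$ by a net $x_\alpha\in N_R(Z)$: since $y\in\partial G$, the points $x_\alpha g_i$ eventually lie in $N_{R'}(Z)\setminus K$ and have pairwise distances bounded by $r$, giving
$$
\sum_{i,j}t_it_j\,k(x_\alpha g_i,x_\alpha g_j)\leq 0
$$
on the tail. Passing to the limit through the continuous extensions yields $\sum_{i,j}t_it_j\,h(yg_i,g_i^{-1}g_j)\leq 0$.

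The step I expect to be the main obstacle is local properness. Given compact $K\subseteq Y$, the increasing clopen cover $\{\overline{N_R(Z)}\cap\partial G\}_R$ of $Y$ forces $K\subseteq\overline{N_R(Z)}\cap\partial G$ for some single $R$. I then need to show that if $(y_n,g_n)\in Y\times G$ satisfies $y_n,y_ng_n\in K$ and $h(y_n,g_n)\leq M$, then $\{g_n\}$ is finite. The plan is to use clopenness of $\overline{N_R(Z)}$ in $\beta G$ to pick $x_n\in N_R(Z)$ close enough to $y_n$ that $x_ng_n\in N_R(Z)$ and $k(x_n,x_ng_n)\leq M+1$; then $d_G(x_n,x_ng_n)=\ell(g_n)$ combined with properness of $k|_{N_R(Z)}$ forces $\ell(g_n)$ to stay bounded, so $g_n$ ranges over a finite set. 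The delicate point is juggling the two distinct facets of the kernel---boundedness/properness on each $N_R(Z)$ versus asymptotic negativity off a bounded subset of $G$---while shuttling between $G$ and $\beta G$ via continuous extensions, but once both parts are in place the theorem is immediate.
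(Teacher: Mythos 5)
Your proposal is correct and follows essentially the same route as the paper: both construct $h(y,g)$ by extending the bounded function $x\mapsto k(x,xg)$ (for the kernel $k$ of Lemma~\ref{glem}) over the Stone--\v{C}ech compactification --- your continuous extension is exactly the paper's ultralimit $\lim_y k(x,xg)$ --- and then verify conditions (i)--(iii) of Definition~\ref{actatmen} and local properness by the same net arguments (eventual avoidance of the bounded exceptional set $K(r)$ for condition (iii), and clopenness of $\overline{N_R(Z)}$ plus properness of $k|_{N_R(Z)}$ for local properness). The only difference is notational, so there is nothing to add.
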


\begin{proof}
Let $k$ be as in Lemma \ref{glem}.  Say $g$ is an element of $G$ and $y$ is an element of $Y$, so contained in some $\overline{N_R(Z)}$.  Note that the set $\{k(x,xg)\}_{x\in N_R(Z)}$ is bounded by properness of the restriction of $k$ to $N_{R+\ell(g)}(Z)$.  Hence, thinking of $y$ as an ultrafilter on $N_R(Z)$, we may define
$$
h(y,g)=\lim_y k(x,xg).
$$  
This definition does not depend on the choice of $R$.  We claim that the function 
$$
h:Y\times G\to\R_+
$$
thus defined has the properties from Definition \ref{actatmen}.  

Indeed, condition (i) follows as 
$$
h(y,e)=\lim_y k(x,x)=0
$$
for any $y$.  For condition (ii), note that
$$
h(y,g)=\lim_y k(x,xg)=\lim_y k(xg,x)=h(xg,g^{-1}).
$$
For condition (iii), let $y$ be fixed, $\{g_1,...,g_n\}$ be a subset of $G$ and $\{t_1,...,t_n\}$ a subset of $\R$ such that $\sum t_i=0$.  Then
$$
\sum_{i,j=1}^n t_it_jh(yg_i,g_i^{-1}g_j)=\lim_y \sum_{i,j=1}^n k(xg_i,xg_ig_i^{-1}g_j)=\lim_y \sum_{i,j=1}^n k(xg_i,xg_j).
$$
Let $r$ be larger than the diameter of $\{xg_1,...,xg_n\}$, and note that removing the finite set $K(r)$ as in the definition of asymptotic conditionally negative definite kernel from $N_R(Z)$ does not affect the ultralimit $\lim_y \sum k(xg_i,xg_j)$.  We may thus think of this as an ultralimit over a set of non-positive numbers, and thus non-positive.

Finally, we check local properness.  Let $K$ be a compact subset of $Y$.  As $\{\overline{N_R(Z)}\cap Y~|~R\in \N\}$ is an open cover of $Y$, the set $K$ must be contained in some $\overline{N_R(Z)}$.  Assume that $y$ and $yg$ are both in $K$.  Choose any net $(x_i)$ in $N_R(Z)$ converging to $y$ and, passing to a subnet, assume that the elements $x_ig$ are all contained in $N_R(Z)$.  Passing to another subnet, assume that $\lim_i k(x_i,x_ig)$ exists.  We then have that 
$$
h(y,g)=\lim_y k(x,xg)=\lim_i k(x_i,x_ig)\geq \inf\{k(x,y)~|~x,y\in N_R(Z),~d(x,y)\geq \ell(g)\}
$$
which tends to infinity as $\ell(g)$ tends to infinity (at a rate depending only on $R$, whence only on $K$) by properness of the restriction of $k$ to $N_R(Z)$.  This completes the proof.  
\end{proof}

We are now ready to produce our example of a $C^*$-algebra $B$ for
which the usual Baum-Connes assembly map 
\begin{equation*}
  \mu:K_*^{top}(G;B)\to K_*(B\rtimes_\red G) 
\end{equation*}
fails to be surjective, but for which the $\E$-Baum-Connes assembly map 
\begin{equation*}
  \mu:K_*^{top}(G;B)\to K_*(B\rtimes_\E G) 
\end{equation*}
is an isomorphism.

Assume that $G$ is a special Gromov monster group.  Then there exists a
\emph{Kazhdan projection} $p$ in some matrix algebra $M_n(A\rtimes_\red G)$ over $A\rtimes_\red G$ such that the
corresponding class $[p]\in K_0(A\rtimes _\red G)$ is not in the image of
the assembly map: see \cite[Section 8]{Willett:2010ud}.   We may write 
$$
p=\lim_{n\to\infty} \sum_{g\in F_n} \sum_{i,j=1}^n f_{ijg}^{(n)}e_{ij}[g]
$$
where $F_n$ is a finite subset of $G$, $\{e_{ij}\}_{i,j=1}^n$ are the standard matrix units for $M_n(\C)$, and each $f^{(n)}_{gij}$ is an element of $A$.  

Let $h:Y\times G\to\R_+$ be a function as in Definition \ref{actatmen}, and let $C_0(W)$ be the $C^*$-subalgebra of $C_0(Y)$ generated by the countably many functions $\{x\mapsto h(x,g)\}_{g\in G}$, the restriction of the countably many functions $f_{gij}^{(n)}$ to $Y$, and all translates of these elements by $G$.  Let $B$ be the preimage of $C_0(W)$ in $A$.  Then the following hold (compare \cite[Lemma 4.2]{Higson:2004la}):
\begin{enumerate}[(i)]
\item $B$ is separable;
\item the action of $G$ on $W$ is a-T-menable;
\item the Kazhdan projection is contained in a matrix algebra over the reduced crossed product $B\rtimes_\red G$.
\end{enumerate}

\begin{corollary}\label{countcor}
The $\E$-Baum-Connes assembly map with coefficients in the algebra $B$
is an isomorphism.  On the other hand, the usual Baum-Connes assembly
map for $G$ with coefficients in $B$ is not surjective. 
\end{corollary}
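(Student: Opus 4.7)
My plan is to prove the $\E$-isomorphism by a two-out-of-three argument applied to a short exact sequence of coefficient algebras, and to deduce the non-surjectivity for the reduced crossed product by naturality. Since $B$ is by definition the preimage in $A$ of the invariant subalgebra $C_0(W)\subseteq C_0(Y)=A/C_0(G)$, there is a $G$-equivariant short exact sequence
$$
0 \to C_0(G) \to B \to C_0(W) \to 0
$$
of separable $\gcstar$-algebras. As $G$ is discrete, Proposition \ref{bcses} reduces the $\E$-Baum-Connes conjecture for $B$ to the conjecture with coefficients in $C_0(G)$ and in $C_0(W)$.

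Both end algebras will be handled by Theorem \ref{bccor}. For $C_0(G)$ the right translation action of $G$ on itself is proper, hence a-T-menable with $h\equiv 0$ (local properness is automatic, since when the action is proper the set $\{(x,g):x\in K,\,xg\in K\}$ is already compact for any compact $K$). For $C_0(W)$, property (ii) of the construction of $B$ is exactly the a-T-menability hypothesis of Theorem \ref{bccor}, and the required conditionally negative type function is simply the restriction to $W$ of the function produced in Theorem \ref{hathe}. Thus the $\E$-Baum-Connes conjecture holds with coefficients $C_0(G)$ and $C_0(W)$, and Proposition \ref{bcses} then gives the first assertion.

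For the failure of reduced surjectivity, I will use naturality of assembly with respect to the equivariant inclusion $B\hookrightarrow A$. The Kazhdan projection $p\in M_n(A\rtimes_\red G)$ of \cite{Willett:2010ud} has the property that its class is not in the image of the reduced assembly map with coefficients $A$. By property (iii), $p$ already lies in $M_n(B\rtimes_\red G)$, giving a class $[p]_B$ that is sent to $[p]$ by the functorially induced map $K_0(B\rtimes_\red G)\to K_0(A\rtimes_\red G)$. If $[p]_B$ were in the image of the reduced $B$-assembly map, naturality would force $[p]$ into the image of the reduced $A$-assembly map, a contradiction. The substantive work has all been done in the earlier sections — Theorem \ref{hathe}, Proposition \ref{bcses}, and the Kazhdan projection construction of \cite{Willett:2010ud} — so the main obstacle to writing out the proof is really just correctly identifying the short exact sequence above and checking compatibility of the functorially induced maps, both of which are routine.
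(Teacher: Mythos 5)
Your proposal is correct and follows essentially the same route as the paper: the same short exact sequence $0\to C_0(G)\to B\to C_0(W)\to 0$, the two-out-of-three Proposition \ref{bcses} combined with Theorem \ref{bccor} for the two end terms (the paper handles $C_0(G)$ by citing properness of the algebra directly, which is the same content as your observation that a proper action is a-T-menable with $h\equiv 0$), and naturality of the reduced assembly map along $B\hookrightarrow A$ to propagate the Kazhdan projection obstruction. No gaps.
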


\begin{proof}
The $C^*$-algebra $B$ sits in a $G$-equivariant short exact sequence
of the form 
\begin{equation*}
  \xymatrix{ 0\ar[r] & C_0(G) \ar[r] & B \ar[r] & C_0(W) \ar[r] & 0}. 
\end{equation*}
The action of $G$ on the space $W$ is a-T-menable, so the
$\E$-Baum-Connes conjecture with coefficients in $C_0(W)$ is true by
Corollary \ref{bccor}.  The $\E$-Baum-Connes conjecture with
coefficients in $C_0(G)$ is true by properness of this algebra (which
also forces $C_0(G)\rtimes_{\E} G=C_0(G)\rtimes_\red G$).  The result for
the $\E$-Baum-Connes conjecture now follows from Lemma \ref{bcses}.   

On the other hand, the results of \cite{Willett:2010ud} show that the
class $[p]\in K_0(A\rtimes_\red G)$ is not in the image of the assembly
map; by naturality of the assembly map in the coefficient algebra, the
corresponding class $[p]\in K_0(B\rtimes_\red G)$ is not in the image of
the assembly map either. 
\end{proof}

\begin{rem}\label{seprem}
It seems very likely that an analogous statement holds for $A$ itself.
However, here we pass to a separable $C^*$-subalgebra to avoid
technicalities that arise in the non-separable case. 
\end{rem}

\section{Concluding remarks and questions}\label{countersec}

\subsection{The role of exactness}

Given the current state of knowledge on exactness and the Baum-Connes conjecture, we do not know which of the following (vague) statements is closer to the truth.

\begin{enumerate}[(i)]
\item Failures of exactness are the fundamental reason for failure of the Baum-Connes conjecture (with coefficients, for groups).
\item Failures of exactness are a convenient way to detect counterexamples to the Baum-Connes conjecture, but counterexamples arise for more fundamental reasons.
\end{enumerate}

The statement that the $\E$-Baum-Connes conjecture is true is a precise version of statement (i), and the material in this paper provides some evidence for its validity.  Playing devil's advocate, we outline some evidence for statement (ii) below.

\subsubsection{Groupoid counterexamples}

As well as the counterexamples to the Baum-Connes conjecture with
coefficients for groups that we have discussed,
Higson, Lafforgue and Skandalis \cite{Higson:2002la} also use failures of exactness to produce
counterexamples to the Baum-Connes conjecture for \emph{groupoids}.  

One can use the precise analog of Definition \ref{cp} to define general groupoid crossed products, and then for a particular crossed product $\tau$ define the $\tau$-Baum-Connes assembly map as the composition of the maximal groupoid Baum-Connes assembly map and the map on $K$-theory induced by the quotient map from the maximal crossed product to the $\tau$-crossed product.  It seems
(we did not check all the details) that the program of this paper can
also be carried out in this context: there is a minimal groupoid
crossed product with good properties, and one can reformulate the groupoid Baum-Connes
conjecture with coefficients accordingly.  The work of Popescu on
groupoid-equivariant $E$-theory \cite{Popescu:2004fk} is relevant
here. 

However, in the case of groupoids this method will \emph{not} obviate all known
counterexamples.   In fact, the following result is not difficult to extrapolate from  \cite[Section 2, $1^\text{st}$
counterexample]{Higson:2002la}.  For any groupoid $G$ and groupoid crossed product $\tau$, let $C^*_\tau(G)$ denote $C_0(G^{(0)})\rtimes_\tau G$, a completion of the groupoid convolution algebra $C_c(G)$.  

\begin{proposition*}\label{goid}
There exists a (locally compact, Hausdorff, second countable, \'{e}tale) groupoid $G$ such that for any groupoid crossed product $\tau$, there exists a projection $p_\tau\in C^*_\tau (G)$ whose $K$-theory class is not in the image of the $\tau$-assembly map.
\end{proposition*}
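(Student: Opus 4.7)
The plan is to adapt the first counterexample of Higson--Lafforgue--Skandalis \cite{Higson:2002la}, which is a bundle of groups over a one-point compactification built from a residually finite property (T) group, and to check that the same projection and obstruction work uniformly for any groupoid crossed product $\tau$.

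Take $\Gamma$ a residually finite, infinite, discrete, property (T) group for which the Baum--Connes conjecture has been verified --- for instance $\Gamma = SL_3(\Z)$, by Lafforgue's theorem.  Fix a nested chain of finite-index normal subgroups $N_1 \supseteq N_2 \supseteq \cdots$ with $\bigcap_k N_k = \{e\}$, and set $\Gamma_k = \Gamma/N_k$.  Define $G$ to be the HLS bundle of groups over $X = \N \cup \{\infty\}$ whose isotropy at $k \in \N$ is $\Gamma_k$ and whose isotropy at $\infty$ is $\Gamma$, with continuity arranged so that each finitely supported function $\phi : \Gamma \to \C$ extends to a compactly supported continuous section via its descent to $\Gamma_k$ for all sufficiently large $k$.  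Then $G$ is locally compact, Hausdorff, second countable, and \'etale.

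The key observation enabling a uniform treatment is that each finite isotropy $\Gamma_k$ admits only one group $C^*$-algebra completion, namely the complex group ring $\C[\Gamma_k]$, so the kernel of the restriction-at-infinity $*$-homomorphism $r_\infty : C^*_\tau(G) \to C^*_\tau(\Gamma)$ is the same ideal (independent of $\tau$) of sections vanishing at infinity.  Property (T) provides a central Kazhdan projection $p_\infty \in C^*_\max(\Gamma)$, and the uniform spectral gap for $\Gamma$ descends to each quotient $\Gamma_k$; this lets me construct a projection $p \in C^*_\max(G)$ whose restriction at infinity is $p_\infty$ and whose restriction at each finite $k$ is the Kazhdan projection of $\Gamma_k$.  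Its image $p_\tau$ in each $C^*_\tau(G)$ is the projection required by the proposition.  To see that $[p_\tau]$ lies outside the image of $\mu_\tau^G$, use naturality of $\tau$-assembly along $r_\infty$ together with the assumed isomorphism $\mu_\red^\Gamma$.  Any preimage $x$ with $\mu_\tau^G(x) = [p_\tau]$ pushes forward to a class in $K_*^{top}(\Gamma)$ whose reduced assembly is forced to vanish (since the reduced Kazhdan projection is zero by non-amenability of $\Gamma$), so that preimage vanishes and one concludes $(r_\infty)_* [p_\tau] = 0$; combined with a detection of $[p_\tau]$ by the characters at the individual finite fibers $\Gamma_k$ --- characters which factor through $\C[\Gamma_k]$ and hence through every $C^*_\tau(G)$ --- this gives a contradiction, exactly as in the HLS argument.

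The main obstacle is the middle step: producing the lift $p \in C^*_\max(G)$ explicitly.  One needs a single element of the algebraic groupoid $*$-algebra (or a norm-convergent limit of such) whose image in each fiber $C^*$-algebra is the corresponding Kazhdan projection, which in turn requires a Kazhdan-type polynomial in the generators of $\Gamma$ that simultaneously projects onto invariants on $\Gamma$ itself and on every finite quotient $\Gamma_k$ with uniform spectral gap.  Property (T) of $\Gamma$, phrased as the existence of a uniform Kazhdan constant for the family $\{\Gamma, \Gamma_1, \Gamma_2, \ldots\}$, is precisely what makes this construction possible.  Once the lift is in hand, the rest of the argument is formal and transports verbatim from the max case of \cite{Higson:2002la} to every intermediate $\tau$.
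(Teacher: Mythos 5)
Your groupoid and your projection coincide with the paper's: the Higson--Lafforgue--Skandalis bundle over $\N\cup\{\infty\}$ built from $SL_3(\Z)$ and its finite quotients, a projection in $C^*_{\max}(G)$ coming from the uniform spectral gap that property (T) gives across $\Gamma$ and all of its quotients, and $p_\tau$ defined as its image. Where you genuinely diverge is the non-surjectivity argument. The paper's proof of that step is a two-line formal reduction: it quotes \cite{Higson:2002la} for the fact that $[p_\red]$ is not in the image of the \emph{reduced} assembly map, and then notes that, by the definition of a crossed product used here, $\mu_\red$ factors as $K^{top}_*(G)\xrightarrow{\mu_\tau}K_*(C^*_\tau(G))\xrightarrow{q_*}K_*(C^*_\red(G))$ with $q_*[p_\tau]=[p_\red]$, so any $\mu_\tau$-preimage of $[p_\tau]$ would be a $\mu_\red$-preimage of $[p_\red]$. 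You instead re-run the HLS obstruction directly at the level of $\tau$ (restriction to the fibre at infinity, injectivity of reduced assembly for $SL_3(\Z)$ to force $(r_\infty)_*[p_\tau]=0$, detection at the finite fibres). This can be made to work and has the virtue of being self-contained, but as written it has a hole: vanishing of $(r_\infty)_*[p_\tau]$ and nonvanishing of the finite-fibre characters are not by themselves contradictory, since those characters factor through the ideal and see nothing at infinity. You need to insert half-exactness of $K$-theory for the extension by $\bigoplus_n\C[\Gamma_n]$, so that $(r_\infty)_*[p_\tau]=0$ forces $[p_\tau]$ to come from $K_0\bigl(\bigoplus_n\C[\Gamma_n]\bigr)=\bigoplus_n R(\Gamma_n)$ and hence to have finitely supported fibre-characters, contradicting $\epsilon_k([p_\tau])=1$ for all $k$. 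Two smaller corrections: surjectivity of Baum--Connes for $SL_3(\Z)$ is not covered by Lafforgue's theorem (it remains open), but your argument only uses injectivity of $\mu_\red$ for $SL_3(\Z)$, which is known, so you should invoke that instead; and the ``main obstacle'' you identify --- lifting the Kazhdan projections to a single element of $C^*_{\max}(G)$ --- is exactly what property (T) (isolation of the trivial representation among \emph{all} unitary representations, hence a Kazhdan constant inherited by every quotient) already provides, which is how the paper dispatches it in one sentence.
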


\begin{proof}
Let $\Gamma_\infty$ be the discrete group $SL(3,\Z)$ and for each $n$ let $\Gamma_n=SL(3,\Z/n\Z)$ and let $\pi_n:\Gamma_\infty\to\Gamma_n$ be the obvious quotient map.   In  \cite[Section 2]{Higson:2002la}, the authors show how to construct a locally compact, Hausdorff second countable groupoid $G$ out of this data: roughly, the base space of $G$ is $\N\cup \{\infty\}$, and $G$ is the bundle of groups with $\Gamma_n$ over the point $n$ in $\N\cup \{\infty\}$.   

As explained in \cite[Section 2, $1^\text{st}$
counterexample]{Higson:2002la}, there is a projection $p_\red$ in $C^*_\red(G)$ whose $K$-theory class is not in the image of the reduced assembly map; roughly $p_\red$ exists as the trivial representation of $SL(3,\Z)$ is isolated among the congruence representations.  However, as $SL(3,\Z)$ has property (T), the trivial representation is isolated among \emph{all} unitary representations of this group, and therefore there is a projection $p_\max$ in $C^*_\max(G)$ that maps to $p_\red$ under the canonical quotient map.  Let $p_\tau$ denote the image of $p_\max$ under the canonical quotient map from the maximal crossed product to the $\tau$-crossed product.  As the reduced assembly map factors through the $\tau$-assembly map, the fact that the class of $p_\red$ is not in the image of the reduced assembly map implies that the class of $p_\tau$ is not in the image of the $\tau$-assembly map.
\end{proof}

For groupoids, then, statement (ii) above seems the more reasonable one.  Having said this, we think the methods of this paper can be used
to obviate some of the other groupoid counterexamples in
\cite{Higson:2002la}, and it is natural to try to describe the groupoids for which this can be done.  This question seems interesting in its own right, and it might also suggest phenomena that could occur in the less directly accessible group case.

\subsubsection{Geometric property (T) for expanders}

As mentioned above, all current evidence suggests that statement (i) above might be the correct one for groups and group actions.  It is crucial here that the only expanders anyone knows how to coarsely embed into a group are those with `large girth', as we exploited in Section \ref{hasec}.  

In \cite[Section 7]{Willett:2010zh} and \cite{Willett:2013cr}, Yu and the third author study
a property of expanders called \emph{geometric property (T)}, which is a strong negation of the Haagerup-type properties used in Section \ref{hasec}.  Say $G$ there is a group containing a coarsely embedded expander with geometric property (T) (it is not known whether such a group exists!).  Then we may construct the analogue of the $C^*$-algebra $B$ used in Corollary \ref{countcor}.  For this $B$ and any crossed product $\rtimes_\tau$ the $C^*$-algebra $B\rtimes_\tau G$ will contain a Kazhdan projection that (modulo a minor technical condition, which should be easy to check) will not be in the image of the $\tau$-assembly map.  In particular, this would imply that the $\E$-Baum-Connes conjecture fails for the group $G$ and coefficient $C^*$-algebra $B$.

It is thus very natural to ask if one can embed an expander with geometric property (T) into a group.  We currently do not know enough to speculate on this either way.

\subsection{Other exact crossed products}\label{others}

We use the crossed product $\rtimes_\E$ for our reformulation of the
Baum-Connes conjecture as it has the following two properties. 
\begin{enumerate}[(i)]
\item It is exact and Morita compatible.
\item It is equal to the reduced crossed product when the group is exact.
\end{enumerate}
However, the results of Theorem \ref{bccor} and Corollary
\ref{countcor} are true for any exact and Morita compatible crossed product.  It is thus
reasonable to consider other crossed products with properties (i) and
(ii) above.   

For example, consider the family of crossed products introduced by Kaliszewski, Landstad and Quigg \cite{Kaliszewski:2012fr} that we discuss in the appendix.  These are all Morita compatible, and one can consider the minimal exact crossed product from this smaller class.  This minimal Kaliszewski-Landstad-Quigg crossed product would have particularly good properties: for example, it would be a functor on a natural Morita category of correspondences \cite[Section 2]{Buss:2013fk}.  It is not clear to us if $\rtimes_{\E}$ has similarly good properties, or if it is equal to the `minimal exact Kaliszewski-Landstad-Quigg crossed product'.

Another natural example is the $\max$-$l^\infty(G)$-crossed product that we looked at in Proposition \ref{exreduced} above: it is possible that this is equal to the $\E$-crossed product.  If it is not equal to the $\E$-crossed product, it would be interesting to know why.

\subsection{Consequences of the reformulated conjecture}

Most of the applications of the Baum-Connes conjecture to topology and geometry, for example to the Novikov and Gromov-Lawson conjectures (see \cite[Section 7]{Baum:1994pr}), follow from the \emph{strong Novikov conjecture}\footnote{Some authors use `strong Novikov conjecture' to refer to the stronger statement that the reduced assembly map with trivial coefficients is injective.}: the statement that the maximal assembly map with trivial coefficients
\begin{equation}\label{maxass}
\mu:K_*^{top}(G)\to K_*(C^*_{max}(G))
\end{equation}
is injective.  
This is implied by injectivity of the $\E$-assembly map, so the reformulated conjecture still has these same consequences.  Moreover, isomorphism of the $\E$-assembly map implies that the assembly map in line \eqref{maxass} is \emph{split} injective.

On the other hand, the Kadison-Kaplansky conjecture states that if $G$ is a torsion free discrete group, then there are no non-trivial projections in the reduced group $C^*$-algebra $C^*_\red(G)$.  It is predicted by the classical form of the Baum-Connes conjecture.  However, it is \emph{not} predicted by our reformulated conjecture for non-exact groups.  The reformulated conjecture does not even predict that there are no non-trivial projections in the exotic group $C^*$-algebra $\C\rtimes_\E G$, essentially as this $C^*$-algebra does not (obviously) have a faithful trace.  

It is thus natural to look for counterexamples to the Kadison-Kaplansky conjecture among non-exact groups.

\subsection{Weak coarse embeddings}\label{wce}

Let $X=\sqcup X_n$ be a box space as in Definition \ref{boxs} and $G$ be a finitely generated group equipped with a word metric.  A collection of functions $f_n:X_n\to G$ is a \emph{weak coarse embedding} if:
\begin{enumerate}[(i)]
\item there is a constant $c>0$ such that 
$$
d_G(f_n(x),f_n(y))\leq cd_{X_n}(x,y)
$$
for all $n$ and all $x,y\in X_n$;
\item the limit 
$$
\lim_{n\to\infty} \max\Big\{\frac{|f_n^{-1}(x)|}{|X_n|}~|~x\in G\Big\}
$$
is zero.
\end{enumerate}
If $(X_n)$ is a sequence of graphs, and $f:X\to G$ is a coarse embedding from the associated box space into a group $G$, then the sequence of maps $(f_n:X_n\to G)$ defined by restricting $f$ is a weak coarse embedding.  Some versions of the Gromov monster construction (for example, \cite{Gromov:2001bh,Arzhantseva:2008bv}) show that weak coarse embeddings of large girth, expander box spaces into groups exist\footnote{Arzhantseva and Delzant \cite{Arzhantseva:2008bv} show something much stronger than this: very roughly, they prove the existence of maps $f_n:X_n\to G$ that are `almost a quasi-isometry', and where the deviation from being a quasi-isometry is `small' relative to the girth.  See \cite[Section 7]{Arzhantseva:2008bv} for detailed statements.  There is no implication either way between the condition that a sequence of maps $(f:X_n\to G)$ be a coarse embedding, and the condition that it satisfy the `almost quasi-isometry' properties of \cite[Section 7]{Arzhantseva:2008bv}.  We do not know if the existence of an `almost quasi-isometric' embedding of a box space into a group implies the existence of a coarse embedding.}, but it is not clear from these constructions that coarse embeddings are possible.

In their original construction of counterexamples to the Baum-Connes conjecture with coefficients \cite[Section 7]{Higson:2002la}, Higson, Lafforgue and Skandalis used the existence of a group $G$ and a weak coarse embedding of an expander $(f_n:X_n\to G)$.  They use this data to construct $G$-spaces $Y$ and $Z$, and show that the Baum-Connes assembly map fails to be an isomorphism either with coefficients in $C_0(Y)$, or with coefficients in $C_0(Z)$.  Their techniques do not show that the reformulated conjecture will fail for one of these coefficients, but we do not know if the reformulated conjecture is true under these assumptions either.

On the other hand, to produce our examples where the reformulated conjecture is true but the old conjecture fails (compare Corollary \ref{countcor}) we need to know the existence of a group $G$ and a coarse embedding $f:X\to G$ of a large girth, expander box space; such groups are the \emph{special Gromov monsters} of Definition \ref{gm}.  We appeal to recent results of Osajda \cite{Osajda:2014ys} to see that appropriate examples exist.

\subsection{Further questions}

The following (related) questions seem natural; we do not currently
know the answer to any of them.  Unfortunately, non-exact groups are
quite poorly understood (for example, there are no concrete
countable\footnote{Exactness passes to closed subgroups, so finding
  concrete uncountable examples - like permutation groups on
  infinitely many letters - is easy given that some countable non-exact group exists at all.} examples), so many of these
questions might be difficult to approach directly. 

\begin{question}\label{eqs}
\begin{enumerate}[(i)]
\item Can one coarsely embed an expander with geometric property (T)
  into a (finitely generated) discrete group? 
\item Can one characterize exact crossed products in a natural way,
  e.g.\ by a `slice map property'? 
\item It is shown in \cite{Roe:2013rt} that for $G$ countable and
  discrete, the reduced crossed product is exact if and
  only if it preserves exactness of the sequence 
$$
0\to C_0(G)\to l^\infty(G)\to l^\infty(G)/C_0(G)\to 0.
$$ 
Is this true for more general crossed products?  Is there another natural `universal short exact sequence' that works for a general crossed product?
\item Say $G$ is a non-exact group, and let $C^*_{\E}(G)$ denote
  $\C\rtimes_\E G$, a completion of the group algebra.  Can this
  completion be equal to $C^*_\red(G)$? 
\item Is the ${\E}$-crossed product equal to the minimal exact Kaliszewski-Landstad-Quigg crossed product?
\item Is the $\E$-crossed product equal to the $\max$-$l^\infty(G)$ crossed product from Proposition \ref{exreduced}?
\item Does the ${\E}$-crossed product give rise to a descent functor on $KK$-theory?\footnote{Added in proof: the answer to this is `yes': see \cite[Sections 5 and 7]{Buss:2014aa}.}
\item Is the reformulated conjecture true for the counterexamples originally constructed by Higson, Lafforgue and Skandalis?  
\end{enumerate}
\end{question}

\appendix

\section{Some examples of crossed products}\label{cpapp}

In this appendix we discuss some examples of crossed products.  These
examples are not necessary for the development in the main piece.
However, they are important as motivation and to show the sort of
examples that can arise (and contradict overly optimistic
conjectures).  

We will look at two families of exotic crossed
products, which were introduced in \cite{Brown:2011fk} and \cite{Kaliszewski:2012fr}.   For many groups both families contain uncountably many natural
examples that are distinct from the reduced and maximal crossed
products; thus there is a rich theory of exotic crossed products.  We will show this and that one family is always exact, the other
always Morita compatible.    We conclude with two examples showing that the
Baum-Connes conjecture fails for many exact crossed products. 

The material draws on work of Brown and Guentner \cite{Brown:2011fk},
of Kaliszewski, Landstad and Quigg \cite{Kaliszewski:2012fr} and of
Buss and Echterhoff \cite{Buss:2013uq,Buss:2013fk}.   The third author is grateful to Alcides Buss and 
Siegfried Echterhoff for some very illuminating discussions of these
papers. 

Let $G$ be a locally compact group.  We will write
$u:G\to\mathcal{U}(\mathcal{H})$, $g\mapsto u_g$ for a unitary
representation of $G$, and use the same notation for the integrated
forms 
$$
u:C_c(G)\mapsto \mathcal{B}(\mathcal{H}),~~~u:C^*_{\max}(G)\to \mathcal{B}(\mathcal{H})
$$
as for the representation itself.    If $A$ is a $G$-$C^*$-algebra, we will write a covariant pair of representations for $(A,G)$ in the form
$$
(\pi,u):(A,G)\to\mathcal{B}(\mathcal{H}),
$$ 
where $\pi:A\to \mathcal{B}(\mathcal{H})$ is a $*$-representation and
$u:G\to\mathcal{U}(\mathcal{H})$ is a unitary representation
satisfying the covariance relation 
$$
u_g\pi(a)u_g^*=\pi(g(a)),~~~g\in G, ~a\in A.
$$
Recall from Section \ref{bcstmt} that $A\rtimes_\alg G$ denotes the space of compactly supported continuous functions from $G$ to $A$ equipped with the usual twisted product and involution, and that $A\rtimes_{\max}G$ denotes the maximal crossed product.  Write 
$$
\pi\rtimes u:A\rtimes_{\alg} G \to\mathcal{B}(\mathcal{H}),~~~\pi\rtimes u:A\rtimes_{\max} G \to\mathcal{B}(\mathcal{H})
$$
for the integrated forms of $(\pi,u)$.

Recall that if $S$ is a collection of unitary representations of $G$,
and $u$ is a unitary representation of $G$, then $u$ is said to be
\emph{weakly contained in $S$} if 
\begin{equation}\label{weak}
\|u(f)\|\leq \sup_{v\in S}\|v(f)\| 
\end{equation}
for all $f\in C_c(G)$.  

Let $\widehat{G}$ denote the unitary dual of $G$, i.e.\ the set of
unitary equivalence classes of irreducible unitary representations of
$G$.  We will identify each class in $\widehat{G}$ with a choice of representative when this causes no confusion.  The unitary dual is topologized by the following closure
operation: if $S$ is a subset of $\widehat{G}$, then the closure
$\overline{S}$ consists of all those elements of $\widehat{G}$ that
are weakly contained in $S$.   Let $\widehat{G}_r$ denote the closed
subset of $\widehat{G}$ consisting of all (equivalence classes of)
irreducible unitary representations that are weakly contained in the
(left) regular representation.

\begin{definition}\label{admis}
A subset $S$ of $\widehat{G}$ is \emph{admissible} if its closure contains $\widehat{G}_r$.  
\end{definition}

Note that $\widehat{G}$ and $\widehat{G}_r$ identify canonically with the spectra of
the maximal and reduced group $C^*$-algebras $C^*_{\max}(G)$ and $C^*_\red(G)$ respectively.   If $S$ is an admissible subset of
$\widehat{G}$, define a $C^*$-norm on $C_c(G)$ by 
$$
\|f\|_S:=\sup_{u\in S}\|u(f)\|
$$
and let $C^*_S(G)$ denote the corresponding completion.  Note that as
$\overline{S}$ contains $\widehat{G}_r$, the identity map on $C_c(G)$
extends to a quotient map 
$$
C^*_S(G)\to C^*_\red(G).
$$

We will now associate two crossed products to each admissible
$S\subseteq \widehat{G}$.  The first was introduced by Brown and
Guentner \cite[Section 5]{Brown:2011fk} (at least in a special case),
and the second by Kaliszewski, Landstad and Quigg  \cite[Section
6]{Kaliszewski:2012fr} (it was subsequently shown to define a functor
by Buss and Echterhoff \cite[Section 7]{Buss:2013uq}).   

\begin{definition}\label{cp bg def}
Let $S$ be an admissible subset of $\widehat{G}$.   A covariant pair $(\pi,u)$ for a $G$-$C^*$-algebra $A$ is an \emph{$S$-representation} if $u$ is weakly contained in $S$.   Define the \emph{Brown-Guentner $S$-crossed-product} (or `BG
$S$-crossed-product') of $A$ by $G$, denoted $A\rtimes_{BG,S} G$, to
be the completion of $A\rtimes_{\alg} G$ for the norm 
$$
\|x\|:=\sup\{\|(\pi\rtimes
u)(x)\|_{\mathcal{B}(\mathcal{H})}~|~(\pi,u):(A,G)\to\mathcal{B}(\mathcal{H})
\text{ an $S$-representation}\}. 
$$ 
If $S$ is unambiguous, we will often write $A\rtimes_{BG}G$.
\end{definition}

\begin{definition}\label{cp klq def}
Let $A$ be a $G$-$C^*$-algebra, and let 
$$
A\rtimes_{\max} G\otimes C^*_S(G)
$$ 
denote the spatial tensor product of the maximal crossed product
$A\rtimes_{\max}G$ and $C^*_S(G)$; let $\mathcal{M}(A\rtimes_{\max} G\otimes
C^*_S(G))$ denote its multiplier algebra.  Let  
$$
(\pi,u):(A,G)\to \mathcal{M}(A\rtimes_{\max}G)\otimes \mathcal{M}(C^*_S(G))\subseteq \mathcal{M}(A\rtimes_{\max}G \otimes C^*_S(G))
$$
be the covariant representation defined by
$$
\pi:a\mapsto a\otimes 1,~~~u:g\mapsto g\otimes g.
$$
Note that this integrates to an injective $*$-homomorphism
$$
\pi\rtimes u:A\rtimes_{\alg}G\to \mathcal{M}(A\rtimes_{\max}G \otimes C^*_S(G)).
$$
Define the \emph{Kaliszewski-Landstad-Quigg $S$-crossed-product} (or `KLQ $S$-crossed-product') of $A$ by $G$, denoted $A\rtimes _{KLQ,S} G$, to be the completion of 
$$
(\pi\rtimes u)(A\rtimes_{\alg} G)
$$
inside $\mathcal{M}(A\rtimes_{\max}G \otimes C^*_S(G))$.  If $S$ is unambiguous, we will often write $A\rtimes_{KLQ}G$.
\end{definition}

For the reader comparing the above to \cite{Brown:2011fk} and
\cite{Kaliszewski:2012fr}, we note that the constructions in those
papers use spaces of matrix coefficients rather than subsets of
$\widehat{G}$ to build crossed products.  Standard duality arguments
show that the two points of view are equivalent: we use subsets of
$\widehat{G}$ here simply as this seemed to lead more directly to the
results we want. 

We now show that the Brown-Guentner and Kaliszewski-Landstad-Quigg
crossed products are crossed product functors.

\begin{proposition}\label{func}
Let $S$ be an admissible subset of $\widehat{G}$.  Let $\phi:A\to B$ be a $G$-equivariant $*$-homomorphism.  Let
$$
\phi\rtimes G:A\rtimes_{\alg}G \to B\rtimes_{\alg} G
$$
denote its integrated form.  Then $\phi\rtimes G$ extends to $*$-homomorphisms on both the BG and KLQ $S$-crossed-products.  In particular, $\rtimes_{BG}$ and $\rtimes_{KLQ}$ are crossed product functors in the sense of Definition \ref{cp}.
\end{proposition}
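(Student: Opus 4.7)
The plan is to handle the two families separately and, in each case, to reduce the question to a statement about covariant pairs.

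For the Brown--Guentner side, the proof will be essentially formal. Given an equivariant $*$-homomorphism $\phi:A\to B$, I would take any $S$-representation $(\pi,u):(B,G)\to\mathcal{B}(\mathcal{H})$ and observe that the pulled-back pair $(\pi\circ\phi,u)$ is again a covariant pair for $(A,G)$ (equivariance of $\phi$ is exactly what ensures covariance) and that it is still an $S$-representation since the representation of $G$ is unchanged. A direct check on generators gives $(\pi\circ\phi)\rtimes u=(\pi\rtimes u)\circ(\phi\rtimes G)$ on $A\rtimes_{\alg}G$, and taking the supremum over $S$-representations of $(B,G)$ yields $\|(\phi\rtimes G)(x)\|_{BG,S}\leq\|x\|_{BG,S}$, so $\phi\rtimes G$ extends by continuity. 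To promote this to a crossed product functor in the sense of Definition~\ref{cp}, I need the BG norm to sit between the reduced and maximal norms. The upper bound is clear because every $S$-representation factors through $A\rtimes_{\max}G$. For the lower bound I would use admissibility of $S$: since $\widehat{G}_r\subseteq\overline{S}$, every irreducible weakly contained in the regular representation $\lambda$ is weakly contained in $S$, so $\lambda$ itself is weakly contained in $S$, whence the canonical covariant pair defining the reduced crossed product is an $S$-representation.

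The KLQ side is less formal because the construction lives inside a multiplier algebra. Given $\phi:A\to B$, functoriality of the maximal crossed product supplies a non-degenerate $*$-homomorphism $\phi\rtimes_{\max}G:A\rtimes_{\max}G\to B\rtimes_{\max}G$, so tensoring with the identity on $C^*_S(G)$ produces a non-degenerate $*$-homomorphism of spatial tensor products which extends strictly continuously to a $*$-homomorphism
\begin{equation*}
\Phi:\mathcal{M}(A\rtimes_{\max}G\otimes C^*_S(G))\to\mathcal{M}(B\rtimes_{\max}G\otimes C^*_S(G)).
\end{equation*}
I would then check on generators that $\Phi(a\otimes 1)=\phi(a)\otimes 1$ and $\Phi(g\otimes g)=g\otimes g$, so $\Phi$ intertwines the two covariant pairs defining the KLQ crossed products; restricting $\Phi$ to the closure of the image of $A\rtimes_{\alg}G$ gives the required extension $A\rtimes_{KLQ,S}G\to B\rtimes_{KLQ,S}G$.

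Finally, to confirm KLQ is a crossed product functor I must check that its norm lies between the reduced and maximal norms. The upper bound is immediate since the KLQ norm is by construction defined via a $*$-homomorphism out of $A\rtimes_{\max}G$. The step I expect to be the main obstacle is the reduced lower bound. For this I would use the admissible quotient $q:C^*_S(G)\to C^*_{\red}(G)$, tensor with the identity on $A\rtimes_{\max}G$, and extend to multiplier algebras; the resulting map carries the KLQ-$S$ completion into the KLQ-$\widehat{G}_r$ completion inside $\mathcal{M}(A\rtimes_{\max}G\otimes C^*_{\red}(G))$. The remaining task, and the technical heart of the argument, is to identify this latter completion with $A\rtimes_{\red}G$. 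This is a standard Fell absorption calculation: if $(\pi_A,u)$ is a faithful covariant pair for $A\rtimes_{\max}G$ on $\mathcal{H}$, then the operators $\pi_A(a)\otimes 1$ and $u_g\otimes\lambda_g$ on $\mathcal{H}\otimes L^2(G)$ generate a copy of the reduced crossed product, which yields the desired natural transformation $A\rtimes_{KLQ,S}G\to A\rtimes_{\red}G$ restricting to the identity on $A\rtimes_{\alg}G$.
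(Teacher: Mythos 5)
Your argument for the Brown--Guentner crossed product is correct and is the same as the paper's: pull back an $S$-representation of $(B,G)$ along $\phi$ to get an $S$-representation of $(A,G)$ with the same unitary part, and compare suprema. Your additional check that the BG norm dominates the reduced norm (via admissibility and weak containment of $\lambda$ in $S$) is also fine.

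The KLQ half, however, contains a genuine gap, and it is exactly the gap that the referee found in the authors' original argument. You assert that functoriality of the maximal crossed product ``supplies a non-degenerate $*$-homomorphism $\phi\rtimes_{\max}G$.'' This is false for a general $G$-equivariant $*$-homomorphism $\phi:A\to B$: the map $\phi\rtimes_{\max}G$ is non-degenerate only when $\phi(A)B$ is dense in $B$ (consider $\phi=0$, or the inclusion of a proper ideal). For a degenerate $*$-homomorphism there is in general no strictly continuous extension to the full multiplier algebra, so your map
$\Phi:\mathcal{M}(A\rtimes_{\max}G\otimes C^*_S(G))\to\mathcal{M}(B\rtimes_{\max}G\otimes C^*_S(G))$
need not exist, and the whole KLQ argument collapses at that point. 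The paper's fix is to replace the full multiplier algebra by the $C^*$-subalgebra $\mathcal{M}_0(A\rtimes_{\max}G\otimes C^*_S(G))$ of those $m$ with $m(1\otimes b)$ and $(1\otimes b)m$ in $A\rtimes_{\max}G\otimes C^*_S(G)$ for all $b\in C^*_S(G)$. On this subalgebra $(\phi\rtimes G)\otimes\mathrm{id}$ does extend regardless of degeneracy of $\phi$ (by \cite[Proposition A.6(i)]{Echterhoff:2006aa}, essentially because the second tensor factor is left untouched), and the covariant pair defining $A\rtimes_{KLQ,S}G$ takes values in $\mathcal{M}_0$ since $(au_g\otimes g)(1\otimes b)=au_g\otimes gb$ lies in the tensor product. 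Restricting the extended map to the closure of the image of $A\rtimes_{\alg}G$ then gives the desired $*$-homomorphism. Your final paragraph, identifying the KLQ completion for $\overline{S}=\widehat{G}_r$ with the reduced crossed product by a Fell absorption argument, is correct and is how the paper (following Kaliszewski--Landstad--Quigg) obtains the natural transformation to $A\rtimes_{\red}G$; that part is not where the difficulty lies.
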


\begin{proof}
We first consider the BG crossed product.   Let $x$ be an element of $A\rtimes_{\alg} G$ and note that
$$
\|(\phi\rtimes G)(x)\|_{B\rtimes_{BG} G}=\sup\{\|((\pi\circ \phi)\rtimes u)(x)\| ~|~ (\pi,u) \text{ an $S$-representation of } (B,G)\}.
$$
However, the set that we are taking the supremum over on the right hand side is a subset of 
$$
\{\|(\pi\rtimes u)(x)\|~|~(\pi,u) \text{ an $S$-representation of } (A,G)\},
$$
and the $A\rtimes_{BG} G$ norm of $x$ is defined to the supremum over this larger set.  This shows that
$$
\|(\phi\rtimes G)(x)\|_{B\rtimes_{BG} G}\leq \|x\|_{A\rtimes_{BG} G}
$$
and thus that $\phi\rtimes G$ extends to the BG crossed product.

The argument for the KLQ crossed product is essentially as in \cite[Proposition 5.2]{Buss:2013uq}\footnote{Our thanks to the referee for pointing out that there was a gap in our original argument and suggesting this reference.}.  Define $\mathcal{M}_0(A\rtimes_{\max}G\otimes C^*_S(G))$ to be the $C^*$-subalgebra of $\mathcal{M}(A\rtimes_{\max}G\otimes C^*_S(G))$ consisting of all those $m$ such that $m(1\otimes b)$ and $(1\otimes b)m$ are in $A\rtimes_{\max}G\otimes C^*_S(G)$ for all $b\in C^*_S(G)$, and note that there is a unique extension of the $*$-homomorphism
$$
(\phi\rtimes G)\otimes \text{id}:A\rtimes_{\max}G\otimes C^*_S(G)\to B\rtimes_{\max}G\otimes C^*_S(G)
$$
to a $*$-homomorphism
$$
(\phi\rtimes G)\otimes \text{id}:\mathcal{M}_0(A\rtimes_{\max}G\otimes C^*_S(G))\to \mathcal{M}(B\rtimes_{\max}G\otimes C^*_S(G))
$$
(whether or not $\phi$ is non-degenerate) by \cite[Proposition A.6, part (i)]{Echterhoff:2006aa}.  Hence there is a commutative diagram
$$
\xymatrix{ A\rtimes_{\alg} G \ar[d]^{\phi\rtimes G} \ar[r] & \mathcal{M}_0(A\rtimes_{\max}G\otimes C^*_S(G)) \ar[d]^{(\phi\rtimes G)\otimes \text{id}} \\ B\rtimes_{\alg} G  \ar[r] & \mathcal{M}(B\rtimes_{\max}G\otimes C^*_S(G))~ ,}
$$
where the horizontal arrows are the injective $*$-homomorphisms used to define $A\rtimes_{KLQ}G$ and $B\rtimes_{KLQ}G$ (it is clear that the former actually has image in $\mathcal{M}_0(A\rtimes_{\max}G\otimes C^*_S(G))$).  In particular, $\phi\rtimes G$ extends to a $*$-homomorphism between the closures 
$$
\phi\rtimes G:\overline{A\rtimes_{\alg} G}\to \overline{B\rtimes_{\alg} G}
$$
of the algebraic crossed products $A\rtimes_{\alg}G$ and $B\rtimes_{\alg}G$ inside $\mathcal{M}(A\rtimes_{\max}G\otimes C^*_S(G))$ and $\mathcal{M}(B\rtimes_{\max}G\otimes C^*_S(G))$ respectively, and thus by definition to a map between the KLQ crossed products.
\end{proof}

Note that if $S$ is dense in $\widehat{G}$, then both the BG and KLQ
crossed products associated to $S$ are equal to the maximal crossed
product.  On the other hand, if the closure of $S$ is
just $\widehat{G}_r$, then the KLQ crossed product is equal to the
reduced crossed product \cite[page 18, point (4)]{Kaliszewski:2012fr},
but the analog of this is not true in general for the BG crossed
product, as follows for example from Lemma \ref{unlem2} below.

We now look at exactness (Definition \ref{cpex}) and Morita compatibilty (Definition \ref{cput}).  We will prove the following results:
\begin{enumerate}[(i)]
\item BG crossed products are always exact;
\item KLQ crossed products are always Morita compatible;
\item BG crossed products are Morita compatible only in the trivial case when $\overline{S}=\widehat{G}$.
\end{enumerate}
We do not know anything about exactness of KLQ crossed products,
other than in the special cases when $\overline{S}=\widehat{G}$ and
$\overline{S}=\widehat{G_r}$; this seems to be a very interesting question in general.

\begin{lemma}\label{exlem}
For any admissible $S$, the BG $S$-crossed-product is exact.   
\end{lemma}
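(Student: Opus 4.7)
The plan is to analyze the class of $S$-representations and show it behaves well under passage to ideals, quotients, and extensions, so that the construction $A \rtimes_{BG,S} G$ inherits exactness from the abstract description of its $C^*$-seminorm as a supremum over such representations. Fix a short exact sequence $0 \to I \to A \to B \to 0$ of $\gcstar$-algebras; we must check injectivity on the left, surjectivity on the right, and exactness in the middle of
\begin{equation*}
0 \to I\rtimes_{BG}G \to A\rtimes_{BG}G \to B\rtimes_{BG}G \to 0.
\end{equation*}
Surjectivity is immediate from the density of $B\rtimes_\alg G$ in $B\rtimes_{BG}G$ and the fact that $A\rtimes_\alg G$ surjects onto $B\rtimes_\alg G$.

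For injectivity on the left, I would show that the $BG$-norms induced from $I$ and from $A$ agree on $I\rtimes_\alg G$. One inequality comes from restriction: if $(\pi,u)$ is an $S$-representation of $(A,G)$, then $(\pi|_I,u)$ is an $S$-representation of $(I,G)$ using the \emph{same} unitary $u$, so the same weak-containment condition holds. For the reverse inequality, start with an $S$-representation $(\sigma,v)$ of $(I,G)$; after cutting down to the essential subspace of $\sigma$ (which is $v$-invariant by covariance), assume $\sigma$ is non-degenerate and extend $\sigma$ by multipliers to a representation $\bar\sigma$ of $A$ with $(\bar\sigma,v)$ covariant. Crucially, the unitary part is unchanged, so $(\bar\sigma,v)$ is still an $S$-representation, and the two integrated forms agree on $y\in I\rtimes_\alg G$. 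Hence the norms coincide, so $I\rtimes_{BG}G \hookrightarrow A\rtimes_{BG}G$.

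For exactness in the middle, I would identify $*$-representations of $(A\rtimes_{BG}G)/(I\rtimes_{BG}G)$ with $S$-representations of $(B,G)$. Any representation of the quotient corresponds to an $S$-representation $(\pi,u)$ of $(A,G)$ with $\pi|_I = 0$; such a $\pi$ factors through a representation $\bar\pi$ of $B$ yielding an $S$-representation $(\bar\pi,u)$ of $(B,G)$ (again the unitary is unchanged). Conversely, every $S$-representation of $(B,G)$ pulls back through $A\to B$ to an $S$-representation of $(A,G)$ vanishing on $I$. Taking the supremum of $\|(\pi\rtimes u)(x)\|$ over these two matching families identifies the quotient $C^*$-norm on the image of $A\rtimes_\alg G$ with the $BG$-norm on $B\rtimes_\alg G$, so the natural surjection $A\rtimes_{BG}G \to B\rtimes_{BG}G$ has kernel exactly $I\rtimes_{BG}G$.

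The only nontrivial step is the extension of an $S$-representation of $(I,G)$ to an $S$-representation of $(A,G)$, and the key observation that makes the whole argument work is that in both the restriction and the extension procedure the unitary representation $u$ of $G$ is left unchanged, so the defining weak-containment condition (weak containment in $S$) is automatically preserved in both directions.
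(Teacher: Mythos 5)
Your proposal is correct and follows essentially the same route as the paper: surjectivity from density, injectivity on the left by extending a (cut-down, non-degenerate) $S$-representation of $(I,G)$ to $(A,G)$ with the same unitary part, and middle exactness by observing that non-degenerate representations of $A\rtimes_{BG}G$ killing $I\rtimes_{BG}G$ come from $S$-representations $(\pi,u)$ with $\pi|_I=0$ and hence descend to $S$-representations of $(B,G)$. The key point you isolate --- that the unitary representation $u$ is unchanged in each of these passages, so weak containment in $S$ is automatic --- is exactly the observation the paper's proof relies on.
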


\begin{proof}
Let 
$$
\xymatrix{ 0 \ar[r] &  I  \ar[r]^{\iota} & A \ar[r]^{\rho} & B \ar[r] & 0 }
$$
be a short exact sequence of $G$-$C^*$-algebras, and consider its `image' 
$$
\xymatrix{ 0 \ar[r] &  I\rtimes_{BG}G  \ar[r]^{\iota\rtimes G} & A\rtimes_{BG}G \ar[r]^{\rho\rtimes G} & B\rtimes_{BG}G \ar[r] & 0 }
$$
under the functor $\rtimes_{BG}$

It follows from the fact that $\rtimes_{BG}$ is a functor that
$(\rho\rtimes G)\circ (\iota\rtimes G)$ is zero.  Moreover,
$\rho\rtimes G$ has dense image and is thus surjective.   

To see that $\iota\rtimes G$ is injective, note that if
$$
(\pi,u):(I,G)\to \mathcal{B}(\mathcal{H})
$$
is an $S$-representation, then the representation $\tilde{\pi}:A\to
\mathcal{B}(\h)$ defined on $\pi(I)\cdot \h$ by 
$$
\tilde{\pi}(a)(\pi(i)v)=\pi(ai)v
$$
fits together with $u$.

Finally, note that as $(\rho\rtimes G)\circ (\iota\rtimes G)=0$, there
is a surjective $*$-homomorphism  
$$
\frac{A\rtimes_{BG} G}{I\rtimes_{BG} G}\to B\rtimes_{BG}G;
$$
we must show that this is injective.  Let $\phi:A\rtimes_{BG} G\to
\mathcal{B}(\h)$ be a non-degenerate $*$-representation containing
$I\rtimes_{BG} G$ in its kernel; it will suffice to show that $\phi$
descends to a $*$-representation of $B\rtimes_{BG}G$.  As $\phi$ is
non-degenerate, it is the integrated form of some $S$-representation 
$$
(\pi,u):(A,G)\to \mathcal{B}(\h).
$$
As $I\rtimes_{\alg} G$ is contained in the kernel
of $\phi$, $I$ is contained in the kernel of $\pi$.  Hence
$(\pi,u)$ descends to a covariant pair for $(B,G)$, which is of course
still an $S$-representation.  Its integrated form thus extends to
$B\rtimes_{BG} G$. 
\end{proof}

\begin{lemma}\label{unlem1}
For any admissible $S$, the KLQ $S$-crossed-product is Morita compatible.   
\end{lemma}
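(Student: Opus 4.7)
Write $B := A\otimes\mathcal{K}_G$. By Definition~\ref{cp klq def}, $A\rtimes_{KLQ,S}G$ is the image of the integrated form
\[\Psi_A := \pi_A\rtimes u_A \,:\, A\rtimes_{\max}G \,\longrightarrow\, \mathcal{M}(A\rtimes_{\max}G\otimes C^*_S(G))\]
of the covariant pair $\pi_A(a)=a\otimes 1$, $u_A(g)=g\otimes g$, and similarly $B\rtimes_{KLQ,S}G$ is the image of the analogous map $\Psi_B$ built from $\pi_B(a\otimes T)=(a\otimes T)\otimes 1$ and $u_B(g)=g\otimes g$. My plan is to show that the untwisting isomorphism $\Phi:B\rtimes_{\max}G \xrightarrow{\sim}(A\rtimes_{\max}G)\otimes\mathcal{K}_G$ intertwines $\Psi_B$ with $\Psi_A\otimes\text{id}_{\mathcal{K}_G}$, up to a canonical rearrangement of tensor factors. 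This will force the images to match, yielding the Morita compatibility isomorphism.

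In detail, tensoring $\Phi$ with $\text{id}_{C^*_S(G)}$ gives an isomorphism of spatial tensor products
\[B\rtimes_{\max}G\otimes C^*_S(G)\;\cong\;(A\rtimes_{\max}G)\otimes\mathcal{K}_G\otimes C^*_S(G),\]
which extends to an isomorphism of multiplier algebras. Composing with the canonical flip of the last two tensor factors, I obtain an isomorphism
\[\widetilde{\Phi}\,:\,\mathcal{M}(B\rtimes_{\max}G\otimes C^*_S(G))\;\xrightarrow{\sim}\;\mathcal{M}((A\rtimes_{\max}G)\otimes C^*_S(G)\otimes\mathcal{K}_G).\]
I will then verify that the square with top edge $\Phi$, bottom edge $\widetilde{\Phi}$, left edge $\Psi_B$, and right edge $\Psi_A\otimes\text{id}_{\mathcal{K}_G}$ commutes. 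By linearity and continuity it suffices to check this on generators $(a\otimes T)g$ of $B\rtimes_\alg G$: a direct computation, using that the untwisting sends $a\otimes T\mapsto a\otimes T$ and $g\mapsto g\otimes\Lambda_g$, shows that both paths send $(a\otimes T)g$ to $(ag\otimes g)\otimes T\Lambda_g$.

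Granting commutativity of the square, $\widetilde{\Phi}$ carries the image of $\Psi_B$ (namely $B\rtimes_{KLQ,S}G$) isomorphically onto the image of $\Psi_A\otimes\text{id}_{\mathcal{K}_G}$ (namely $(A\rtimes_{KLQ,S}G)\otimes\mathcal{K}_G$), yielding the desired isomorphism, which is visibly induced by $\Phi$. The most delicate aspect I expect is ensuring that $\Psi_A\otimes\text{id}_{\mathcal{K}_G}$ is well-defined as a $*$-homomorphism into the appropriate multiplier algebra and that the tensor-factor bookkeeping is consistent across the three multiplier algebras in play; nondegeneracy of $\Psi_A$ (which holds because $u_A$ is unitary and $\pi_A$ is nondegenerate) together with nuclearity of $\mathcal{K}_G$ should handle this cleanly. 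The algebraic content of the identity on generators, by contrast, is entirely routine.
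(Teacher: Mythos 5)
Your proposal is correct and follows essentially the same route as the paper: both arguments tensor the untwisting isomorphism $\Phi$ with the identity on $C^*_S(G)$, extend to multiplier algebras, apply the canonical flip of the last two tensor factors, and observe that the resulting isomorphism carries $(A\otimes\mathcal{K}_G)\rtimes_{KLQ,S}G$ onto $(A\rtimes_{KLQ,S}G)\otimes\mathcal{K}_G$. Your explicit check on generators (both paths sending $(a\otimes T)g$ to $(ag\otimes g)\otimes T\Lambda_g$) is a correct and slightly more detailed verification of the identification that the paper asserts directly.
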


\begin{proof}
Let $\mathcal{K}_G$ denote the compact operators on the infinite
amplification $\oplus_{n\in \N}L^2(G)$ of the regular representation equipped with the natural conjugation action. Let $A$ be a $G$-$C^*$-algebra, and let  
$$
\Phi:(A\otimes\mathcal{K}_G)\rtimes_{\max} G\to (A\rtimes_{\max} G) \otimes \mathcal{K}_G
$$
denote the untwisting isomorphism from line \eqref{ut}.  Consider the isomorphism
$$
\Phi\otimes 1:(A\otimes \mathcal{K}_G)\rtimes_{\max}G\otimes C^*_S(G)\to (A\rtimes_{\max} G)\otimes \mathcal{K}_G\otimes C^*_S(G)
$$
and its extension 
$$
\Phi\otimes 1: \mathcal{M}((A\otimes \mathcal{K}_G)\rtimes_{\max}G\otimes C^*_S(G))\to  \mathcal{M}((A\rtimes_{\max} G)\otimes \mathcal{K}_G\otimes C^*_S(G))
$$
to multiplier algebras.  Up to the canonical identification 
$$
A\rtimes_{\max} G\otimes \mathcal{K}_G\otimes C^*_S(G)\cong (A\rtimes_{\max}G)\otimes C^*_S(G)\otimes \mathcal{K}_G,
$$
the restriction of $\Phi\otimes 1$ to 
$$
(A\otimes\mathcal{K}_G)\rtimes_{KLQ} G\subseteq \mathcal{M}((A\otimes \mathcal{K}_G)\rtimes_{\max}G\otimes C^*_S(G))
$$
identifies with the untwisting isomorphism from this $C^*$-algebra to 
$$
(A\rtimes_{KLQ} G) \otimes \mathcal{K}_G\subseteq \mathcal{M}(A\rtimes_{\max}G\otimes C^*_S(G))\otimes \mathcal{K}_G\subseteq  \mathcal{M}(A\rtimes_{\max}G\otimes C^*_S(G)\otimes \mathcal{K}_G)\eqno\qedhere
$$
\end{proof}

\begin{lemma}\label{unlem2}
For any admissible $S$, the BG $S$-crossed-product is Morita compatible if
and only if $S$ is dense in $\widehat{G}$. 
\end{lemma}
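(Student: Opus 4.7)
The $(\Leftarrow)$ direction is essentially tautological. If $S$ is dense in $\widehat{G}$, then $C^*_S(G)=C^*_\max(G)$, and moreover every unitary representation of $G$ is weakly contained in $S$ (since weak containment of $u$ in $S$ just says that the integrated form of $u$ factors through $C^*_S(G)$). Hence every covariant pair is an $S$-representation, so $\rtimes_{BG,S}=\rtimes_\max$ on every $\gcstar$-algebra, and Morita compatibility follows from Morita compatibility of $\rtimes_\max$.

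For $(\Rightarrow)$, the plan is to test Morita compatibility on the coefficient algebra $A=\C$ and exploit an absorption property of $\mathcal{K}_G$. The key step is the claim
$$
\mathcal{K}_G\rtimes_{BG,S}G \;=\; \mathcal{K}_G\rtimes_\max G.
$$
To prove this, I would take a non-degenerate covariant representation $(\pi,u)$ of $(\mathcal{K}_G,G)$ and use the standard analysis of representations of the compact operators: up to unitary equivalence, $\pi=\mathrm{id}\otimes 1_{\mathcal{H}_0}$ on $H\otimes \mathcal{H}_0$ for some Hilbert space $\mathcal{H}_0$. The covariance relation $u_g\pi(T)u_g^{-1}=\pi(\Lambda_gT\Lambda_g^{-1})$ then forces $u_g(\Lambda_g^{-1}\otimes 1)$ to lie in the commutant of $\mathcal{K}(H)\otimes 1$, so $u=\Lambda\otimes v$ for some unitary representation $v$ of $G$. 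By Fell's absorption principle, $\lambda\otimes v$ is unitarily equivalent to a multiple of $\lambda$, so $u$ is weakly contained in the regular representation, hence in $\widehat{G}_r\subseteq\overline{S}$ by admissibility. So every non-degenerate covariant pair is automatically an $S$-representation, proving the claim.

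Now assume $\rtimes_{BG,S}$ is Morita compatible. Apply Morita compatibility with $A=\C$ and combine with the maximal untwisting isomorphism to form the commutative diagram
$$
\xymatrix{
\mathcal{K}_G\rtimes_\max G \ar[r]^-{\Phi}_-{\cong} \ar@{=}[d] & C^*_\max(G)\otimes \mathcal{K}_G \ar[d]^{q\otimes \mathrm{id}} \\
\mathcal{K}_G\rtimes_{BG,S} G \ar[r]^-{\cong} & C^*_S(G)\otimes \mathcal{K}_G,}
$$
where the left vertical equality is the claim above and the bottom arrow is the isomorphism provided by Morita compatibility. The right vertical map $q\otimes\mathrm{id}$ is therefore an isomorphism, which forces $q:C^*_\max(G)\to C^*_S(G)$ to be injective, i.e., $C^*_\max(G)=C^*_S(G)$. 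This equality is equivalent to $\overline{S}=\widehat{G}$, i.e., $S$ is dense in $\widehat{G}$.

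The main obstacle is the absorption claim $\mathcal{K}_G\rtimes_{BG,S}G=\mathcal{K}_G\rtimes_\max G$; once this is in hand the rest is diagram chasing and the observation that $C^*_\max(G)\otimes \mathcal{K}_G\to C^*_S(G)\otimes \mathcal{K}_G$ is injective only when $C^*_\max(G)\to C^*_S(G)$ already is.
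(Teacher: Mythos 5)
Your proof is correct and follows essentially the same route as the paper: both arguments reduce to the absorption fact $\mathcal{K}_G\rtimes_{BG,S}G=\mathcal{K}_G\rtimes_{\max}G$ (via Fell's trick and admissibility of $S$) and then conclude from the untwisting diagram that $C^*_{\max}(G)\to C^*_S(G)$ must be an isomorphism. The only cosmetic difference is that the paper verifies the absorption claim by exhibiting a single faithful $S$-representation $T\mapsto T\otimes 1$, $g\mapsto \Lambda_g\otimes U_g$ with $U$ faithful on $C^*_{\max}(G)$, whereas you classify all non-degenerate covariant pairs of $(\mathcal{K}_G,G)$ and show each is an $S$-representation.
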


\begin{proof}
If $S$ is dense in $\widehat{G}$, then $\rtimes_{BG}$ is equal to
the maximal crossed product and well-known to be Morita compatible. 

For the converse, let $\mathcal{K}_G$ be as in the definition of Morita compatibilty.  Let $U:G\to\mathcal{U}(\mathcal{H})$ be a unitary
representation that extends faithfully to $C^*_{\max}(G)$.  Consider
now the covariant pair 
$$
(\pi,u):(\mathcal{K}_G,G)\to \mathcal{B}(\oplus_{n\in\N} L^2(G) \otimes \h),
$$
defined by 
$$
\pi:T\mapsto T\otimes 1,~~~u:g\mapsto (\oplus \lambda_g) \otimes U_g,
$$
which by the explicit form of the untwisting isomorphism is a faithful
representation (with image $\mathcal{K}_G\otimes C^*_{\max}(G)$).  On the
other hand, the representation $u$ is weakly contained in the regular
representation by Fell's trick.  Hence by admissibility of $S$,
$(\pi,u)$ is an $S$-representation, and thus extends to
$\mathcal{K}_G\rtimes_{BG}G$.  We conclude that the canonical quotient map
$$
\mathcal{K}_G\rtimes_{\max}G \to \mathcal{K}_G\rtimes_{BG} G
$$
is an isomorphism.  

On the other hand, consider the commutative diagram
$$
\xymatrix{ \mathcal{K}_G\rtimes_{\max} G \ar[r]^-{\Phi,\cong} \ar@{=}[d] & \mathcal{K}_G\otimes C^*_{\max}(G) \ar[d]^{\text{id}\otimes \rho} \\  \mathcal{K}_G\rtimes_{BG}G \ar[r] & \mathcal{K}_G\otimes C^*_S(G) }
$$
where $\Phi$ is the untwisting isomorphism, $\rho:C^*_{\max}(G)\to C^*_S(G)$ is the canonical quotient map, and the bottom line is defined
to make the diagram commute.  To say that $\rtimes_{BG}$ is
Morita compatible means by definition that the surjection on the bottom line
is an isomorphism.  This implies that the right hand vertical map is
an isomorphism, whence $\rho$ is an isomorphism and so
$\overline{S}=\widehat{G}$. 
\end{proof}

We now characterize when the various BG and KLQ crossed products are
the same.  The characterizations imply that for non-amenable $G$ the
families of BG and KLQ crossed products both tend to be fairly large
(Lemma \ref{same} and Examples \ref{lotsex}), and that the only crossed
product common to both is the maximal crossed product (Lemma
\ref{notsame}).  Note that the second part of Proposition \ref{same} also appears in \cite[Proposition 2.2]{Buss:2013fk} (in different language).

\begin{definition}\label{ideal}
A subset $S$ of $\widehat{G}$ is an \emph{ideal} if for any unitary
representation $u$ and any $v\in S$, the tensor product representation
$u\otimes v$ is weakly contained in $S$. 
\end{definition}

\begin{proposition}\label{same}
Let $S$, $R$ be admissible subsets of $\widehat{G}$.  
\begin{enumerate}[(i)]
\item The BG crossed products defined by $S$ and $R$ are the same if and only if the closures of $R$ and $S$ in $\widehat{G}$ are the same.  In particular, BG crossed products are in one-to-one correspondence with closed subsets of $\widehat{G}$ that contain $\widehat{G}_r$.  
\item The KLQ crossed products defined by $S$ and $R$ are the same if and only if the closed ideals in $\widehat{G}$ generated by $R$ and $S$ are the same.  In particular, KLQ crossed products are in one-to-one correspondence with closed ideals of $\widehat{G}$ that contain $\widehat{G}_r$. 
\end{enumerate}
\end{proposition}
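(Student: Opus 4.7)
The plan is to establish both parts by a common two-step strategy: first show that each crossed product depends only on an appropriate closure of $S$ (topological closure for BG, ideal closure for KLQ), and then recover that closure from the crossed product by specializing to $A = \mathbb{C}$ with trivial action.

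For part (i), an $S$-representation as in Definition \ref{cp bg def} is characterized by the inequality $\|u(f)\| \leq \|f\|_S$ for all $f\in C_c(G)$, and since $\sup_{v \in S}\|v(f)\| = \sup_{v \in \overline{S}}\|v(f)\|$ (this is the very definition of weak containment used to form $\overline{S}$), the classes of $S$-representations and $\overline{S}$-representations coincide; this gives the ``if'' direction. Conversely, specializing to $A = \mathbb{C}$ with trivial action reduces covariant pairs to unitary representations of $G$, so $\mathbb{C} \rtimes_{BG,S} G = C^*_S(G)$ with spectrum $\overline{S}$, and equality of the BG crossed products at $\mathbb{C}$ forces $\overline{S} = \overline{R}$. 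The ``in particular'' statement follows because every closed subset of $\widehat{G}$ containing $\widehat{G}_r$ is already admissible.

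For part (ii), the core task is to identify the spectrum of $\mathbb{C} \rtimes_{KLQ,S} G$ with the closed ideal $\langle S \rangle$ generated by $S$. The diagonal embedding $g \mapsto g \otimes g$ carries an irreducible representation $v \otimes w$ of the spatial tensor product $C^*_{\max}(G) \otimes C^*_S(G)$ (with $v\in \widehat{G}$ and $w$ irreducible, weakly contained in $S$) to the tensor representation $v \otimes w$ of $G$. Since irreducible representations of a spatial tensor product are tensor products of irreducible representations of the factors, the spectrum of $\mathbb{C}\rtimes_{KLQ,S}G$ consists precisely of the irreducible subrepresentations of such tensor products $v \otimes w$; a direct check using associativity of the tensor product identifies this set with $\langle S \rangle$. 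The ``only if'' direction of (ii) and the corresponding ``in particular'' statement then follow exactly as in part (i).

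For the ``if'' direction of (ii), I would prove the stronger assertion that $A \rtimes_{KLQ,S} G = A \rtimes_{KLQ,\langle S \rangle} G$ for every $\gcstar$-algebra $A$. The inclusion $S \subseteq \langle S \rangle$ yields a quotient $C^*_{\langle S \rangle}(G) \to C^*_S(G)$ and hence a natural surjection $A \rtimes_{KLQ,\langle S \rangle} G \to A \rtimes_{KLQ,S} G$; for the reverse inequality, any representation of $C^*_{\langle S \rangle}(G)$ is weakly contained in one of the form $v \otimes w$ with $v \in \widehat{G}$ and $w \in \overline{S}$, and the resulting seminorm on $A \rtimes_{\alg} G$ arising from a covariant pair $(\pi, u)$ is already realized by the KLQ $S$-norm using the covariant pair $(\pi \otimes 1, u \otimes v)$ paired with $w$. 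The main obstacle is the spectrum computation in the third paragraph: matching irreducible representations of the spatial tensor product $C^*_{\max}(G) \otimes C^*_S(G)$ under the diagonal embedding with the closed ideal of $\widehat{G}$ requires care in light of the potential non-nuclearity of $C^*_S(G)$ and the need to translate between weak containment and subrepresentations of tensor products.
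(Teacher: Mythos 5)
Your part (i) is correct and is essentially the paper's argument: $S$- and $\overline{S}$-representations coincide because the defining suprema agree, and the converse follows by evaluating at $A=\C$ with the trivial action, where the BG crossed product is $C^*_S(G)$.

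Part (ii) as outlined has a genuine gap. Your key step is to compute the spectrum of $\C\rtimes_{KLQ,S}G$ by invoking the claim that irreducible representations of the spatial tensor product $C^*_{\max}(G)\otimes C^*_S(G)$ are tensor products of irreducible representations of the factors. That claim is false in general: for non-type-I (and in particular possibly non-nuclear) factors such as $C^*_{\max}(G)$, the restriction of an irreducible representation of $A\otimes B$ to a tensor factor is only a factor representation, and irreducibles need not split as elementary tensors. You flag this yourself as ``the main obstacle,'' but it is not merely a point requiring care --- the statement you want to lean on is unavailable, so the spectrum computation does not go through as written. The paper avoids decomposing irreducibles entirely: it works at the level of generated $C^*$-subalgebras and weak containment. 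Concretely, it first uses the Kaliszewski--Landstad--Quigg absorption result that the diagonal covariant pair $a\mapsto a\otimes 1$, $g\mapsto g\otimes g$ into $\mathcal{M}(A\rtimes_{\max}G\otimes C^*_{\max}(G))$ has closed image isomorphic to $A\rtimes_{\max}G$; this lets one realize $A\rtimes_{KLQ,S}G$ inside $\mathcal{M}(A\rtimes_{\max}G\otimes C^*_{\max}(G)\otimes C^*_S(G))$ via the triple diagonal. It then observes that the $C^*$-algebra generated by $g\mapsto g\otimes g$ in $\mathcal{M}(C^*_{\max}(G)\otimes C^*_S(G))$ is $C^*_{\langle S\rangle}(G)$ --- a norm/weak-containment computation (the relevant norm is $\sup\|(v\otimes w)(f)\|$ over $v\in\widehat{G}$, $w\in S$, and the irreducibles weakly contained in this family are by definition $\langle S\rangle$), not a classification of irreducibles of the tensor product. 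Collapsing the last two tensor factors then gives $A\rtimes_{KLQ,S}G=A\rtimes_{KLQ,\langle S\rangle}G$ for every $A$, and the converse again follows by evaluating at $A=\C$.

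A second, smaller problem is your argument for the reverse inequality in the ``if'' direction: you describe the KLQ $S$-norm as being ``realized by'' a covariant pair $(\pi\otimes 1, u\otimes v)$ weakly paired with $w$. This conflates the BG-type description (a supremum over covariant pairs whose unitary part is constrained) with the KLQ definition, which is the norm of one specific representation into $\mathcal{M}(A\rtimes_{\max}G\otimes C^*_S(G))$. To compare the $S$- and $\langle S\rangle$-KLQ norms you genuinely need the absorption identity above (or an equivalent description of the KLQ kernel); the covariant-pair bookkeeping you sketch does not by itself produce the required inequality.
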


\begin{proof}
We look first at the BG crossed products.  Note that a covariant pair
$(\pi,u):(A,G)\to \mathcal{B}(\mathcal{H})$ is an $S$-representation
if and only if it is an $\overline{S}$-representation.  This shows
that the BG crossed products associated to $S$ and $\overline{S}$ are
the same, and thus that if $\overline{R}=\overline{S}$, then their BG
crossed products are the same. 

Conversely, note that if $R$ and $S$ have the same BG crossed
products, then considering the trivial action on $\C$ shows that
$C^*_S(G)=C^*_R(G)$.  This happens (if and) only if
$\overline{R}=\overline{S}$.   

Look now at the KLQ crossed products.  If $S$ is an admissible subset
of $\widehat{G}$, denote by $\langle S\rangle$ the closed ideal
generated by $S$.  Let $A$ be a $G$-$C^*$-algebra, and consider the
covariant representation of $(A,G)$ into 
$$ 
\mathcal{M}(A\rtimes_{\max}G)\otimes \mathcal{M}(C^*_{\max}(G))\subseteq \mathcal{M}(A\rtimes_{\max}G\otimes C^*_{\max}(G))
$$ 
defined by
$$
\pi: a\mapsto a\otimes 1,~~~u:g\mapsto g\otimes g.
$$
The integrated form of this representation defines a $*$-homomorphism
$$
A\rtimes_{\alg}G \to \mathcal{M}(A\rtimes_{\max}G \otimes C^*_{\max}(G))
$$
and the closure of its image is isomorphic to $A\rtimes_{\max}G$ by  \cite[page 18,
point (3)]{Kaliszewski:2012fr}.  It follows that to define
$A\rtimes_{KLQ,S}G$ we may take the closure of the image of
$A\rtimes_{\alg}G$ under the integrated form of the covariant pair of
$(A,G)$ with image in  
$$
\mathcal{M}(A\rtimes_{\max}G)\otimes \mathcal{M}(C^*_{\max}(G))\otimes \mathcal{M}(C^*_S(G))\subseteq \mathcal{M}(A\rtimes_{\max}G\otimes C^*_{\max}(G)\otimes C^*_S(G))
$$ 
defined by
\begin{equation}\label{bigif}
\pi:a\mapsto a\otimes 1\otimes1,~~~u:g\mapsto g\otimes g\otimes g.
\end{equation}
However, the closure of the image of the integrated form of the representation 
\begin{equation}\label{<s>}
u:G\to \mathcal{U}(C^*_{\max}(G)\otimes C^*_{S}(G)),~~~g\mapsto g\otimes g
\end{equation}
is easily seen to be $C^*_{\langle S\rangle}(G)$.  Therefore the
integrated form of the representation in line \eqref{bigif} identifies
with the integrated form of the covariant pair of $(A,G)$ with image
in  
$$
\mathcal{M}(A\rtimes_{\max}G)\otimes \mathcal{M}(C^*_{\langle S\rangle }(G))\subseteq \mathcal{M}(A\rtimes_{\max}G\otimes C^*_{\langle S\rangle}(G))
$$ 
defined by
$$
\pi:a\mapsto a\otimes 1,~~~u:g\mapsto g\otimes g.
$$
This discussion shows that $S$ and $\langle S\rangle$ give rise to the
same KLQ crossed product, and thus that if $\langle S\rangle =\langle
R\rangle$, then $S$ and $R$ define the same KLQ crossed product. 

Conversely, note that $\C\rtimes_SG$ is (by definition) the
$C^*$-algebra generated by the integrated form of the unitary
representation in line \eqref{<s>} and, as already noted, this is
$C^*_{\langle S\rangle}(G)$.  In particular, if $R$ and $S$ have the
same KLQ crossed product then $C^*_{\langle S\rangle}(G)$ and
$C^*_{\langle R\rangle}(G)$ are the same, and this forces $\langle
R\rangle=\langle S\rangle$. 
\end{proof}

\begin{examples}\label{lotsex}
Let $G$ be a locally compact group.  For any $p\in [1,\infty)$, let $S_p$ denote those
(equivalences classes of) irreducible unitary representations for
which there are a dense set of matrix coefficients in $L^p(G)$.  Then
$S_p$ is an ideal in $\widehat{G}$ containing $\widehat{G_r}$.  Building on seminal work of Haagerup \cite{Haagerup:1979rq},
Okayasu  \cite{Okayasu:2012vn} has shown that for $G=F_2$ the free group on two generators, the
completions $C^*_{S_p}(G)$ are all different as $p$ varies through
$[2,\infty)$.  It follows by an induction argument that the same is
true for any discrete $G$ containing $F_2$ as a subgroup\footnote{This is also true more generally: whether it is true for \emph{any} non-amenable locally compact $G$ seems to be an interesting question.}.   

Hence in particular for `many' non-amenable $G$ there is an uncountable family of distinct closed ideals $
\{\overline{S_p}~|~p\in[2,\infty)\}
$
in $\widehat{G}$, and thus an uncountable family of distinct KLQ and BG completions.
\end{examples}

The next lemma discusses the relationship between the BG and KLQ
crossed products associated to the same $S$.  Considering the trivial
crossed products of $\C$ with respect to the trivial action as in the
proof of Proposition \ref{same} shows that the question is only interesting when $S$ is a closed ideal in $\widehat{G}$, so we
only look at this case.  Compare \cite[Example
6.6]{Kaliszewski:2012fr} and also \cite{Quigg:1992kx} for a more
detailed discussion of similar phenomena. 

\begin{lemma}\label{notsame}
Let $S$ be a closed ideal in $\widehat{G}$ containing $\widehat{G_r}$.
Then for any $G$-$C^*$-algebra $A$, the identity on $A\rtimes_{\alg}G$
extends to a quotient $*$-homomorphism 
$$
A\rtimes_{BG}G \to A\rtimes _{KLQ}G
$$
from the BG $S$-crossed-product to the KLQ $S$-crossed-product.  

Moreover, this quotient map is an isomorphism for $A=\mathcal{K}_G$ if and only if $S=\widehat{G}$ (in which case we have $\rtimes_{BG}=\rtimes_{KLQ}=\rtimes_{\max}$).
\end{lemma}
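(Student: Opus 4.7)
\emph{Plan for Part 1.} The goal is to show that the covariant pair $(\pi, u)$ defining the KLQ norm is always an $S$-representation in the sense of Definition \ref{cp bg def}, so that its integrated form factors through the BG $S$-crossed-product. I would first choose a faithful non-degenerate representation of $A\rtimes_{\max}G \otimes C^*_S(G)$ of the form $\sigma_1\otimes\sigma_2$ on $\mathcal{H}_1\otimes\mathcal{H}_2$, with each $\sigma_i$ a faithful non-degenerate representation of the corresponding tensor factor. After extending to multiplier algebras, the pair $(\pi,u)$ takes the form $(\pi_A\otimes 1, u_1\otimes u_2)$, where $(\pi_A, u_1)$ is the covariant pair for $(A,G)$ inherited from $\sigma_1$, and $u_2:G\to\mathcal{U}(\mathcal{H}_2)$ is obtained by composing the canonical map $G\to\mathcal{U}(\mathcal{M}(C^*_S(G)))$ with $\sigma_2$. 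By construction $u_2$ factors through $C^*_S(G)$ and so is weakly contained in $S$. The crux is then to show that $u_1\otimes u_2$ is also weakly contained in $S$; granted this, $(\pi_A\otimes 1, u_1\otimes u_2)$ is an $S$-representation of $(A,G)$, hence its integrated form is bounded by the BG norm. Since the KLQ norm is by definition the operator norm in this faithful representation of $\mathcal{M}(A\rtimes_{\max}G\otimes C^*_S(G))$, we conclude $\|x\|_{KLQ}\leq\|x\|_{BG}$ on $A\rtimes_\alg G$, and the quotient map extends by continuity.

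\emph{Plan for Part 2.} One direction is immediate: if $S=\widehat{G}$, then $C^*_S(G)=C^*_{\max}(G)$, so both BG and KLQ $S$-crossed-products coincide with the maximal crossed product (using the identification from the proof of Proposition \ref{same}(ii) on the KLQ side, and the trivial observation that every unitary representation is weakly contained in $\widehat{G}$ on the BG side), and the map in question is the identity. For the converse, specialize to $A=\mathcal{K}_G$. The argument in the proof of Lemma \ref{unlem2} shows that $\mathcal{K}_G\rtimes_{BG}G=\mathcal{K}_G\rtimes_{\max}G$ for any admissible $S$, and the untwisting isomorphism of line \eqref{ut} identifies this with $C^*_{\max}(G)\otimes\mathcal{K}_G$; meanwhile, Lemma \ref{unlem1} identifies $\mathcal{K}_G\rtimes_{KLQ}G$ with $C^*_S(G)\otimes\mathcal{K}_G$. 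Both untwisting isomorphisms agree on the dense subalgebra $\mathcal{K}_G\rtimes_\alg G$, so the quotient map from Part 1 fits into a commutative square whose right-hand vertical arrow is $\rho\otimes\mathrm{id}$, where $\rho:C^*_{\max}(G)\to C^*_S(G)$ is the canonical quotient. The map on $\mathcal{K}_G$ crossed products is thus an isomorphism if and only if $\rho\otimes\mathrm{id}$ is, if and only if $\rho$ is, which happens precisely when $\overline{S}=\widehat{G}$, i.e.\ $S=\widehat{G}$ as $S$ is assumed closed.

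\emph{Main obstacle.} The principal technical point is verifying that if $u_2$ is weakly contained in a closed ideal $S\subseteq\widehat{G}$ and $u_1$ is an arbitrary unitary representation of $G$, then $u_1\otimes u_2$ is weakly contained in $S$. The cleanest route is to pass to the outer tensor product $u_1\boxtimes u_2$, a representation of $G\times G$; using compatibility of weak containment with the minimal tensor product of $C^*$-algebras, $u_1\boxtimes u_2$ is weakly contained in the family $\{u_1\boxtimes v:v\in S\}$. Each such $u_1\boxtimes v$ restricts along the diagonal $G\hookrightarrow G\times G$ to $u_1\otimes v$, which is weakly contained in $S$ by the ideal property from Definition \ref{ideal} together with closedness of $S$; and since restriction of representations respects weak containment, the conclusion follows.
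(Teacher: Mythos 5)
Your proposal is correct and follows essentially the same route as the paper: part 1 is exactly the paper's observation that, in a faithful representation of $A\rtimes_{\max}G\otimes C^*_S(G)$, the covariant pair defining the KLQ completion becomes an $S$-representation because $S$ is a closed ideal (your outer-tensor-product/diagonal-restriction argument just fills in the weak-containment detail the paper leaves implicit), and part 2 is the same commutative square built from Lemmas \ref{unlem1} and \ref{unlem2}, reducing to whether $\rho:C^*_{\max}(G)\to C^*_S(G)$ is an isomorphism.
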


\begin{proof}
Let $\mathcal{H}$ and $\mathcal{H}_A$ be faithful representation spaces for $C^*_S(G)$ and $A\rtimes_{\max}G$ respectively.  As $S$ is an ideal, the representation 
$$
A\rtimes_{\alg}G \to \mathcal{M}(A\rtimes_{\max}G\otimes C^*_S(G))\subseteq \mathcal{B}(\mathcal{H}_A\otimes \mathcal{H})
$$
defining $A\rtimes_{KLQ}G$ is the integrated form of an
$S$-representation of $(A,G)$, and thus extends to $A\rtimes_{BG}G$.
This shows the existence of the claimed quotient map. 

For the second part, note that the arguments of Lemma \ref{unlem1} and \ref{unlem2} show that there is a commutative diagram
$$
\xymatrix{ \mathcal{K}_G\rtimes_{BG}G \ar[r]^-\cong \ar[d] & \mathcal{K}_G\otimes C^*_{\max}(G) \ar[d]^{\text{id}\otimes \rho}\\ \mathcal{K}_G\rtimes_{KLQ} G \ar[r]^-\cong & \mathcal{K}_G\otimes C^*_S(G) }
$$  
where the left hand vertical map is the quotient extending the
identity map on $\mathcal{K}_G\rtimes_{\alg}G$, and the right hand
vertical map is the quotient extending the identity on the algebraic tensor product
$\mathcal{K}_G\odot C_c(G)$.  Hence if  
$$
\mathcal{K}_G\rtimes_{BG}G =\mathcal{K}_G\rtimes _{KLQ}G
$$
then we must have that $\rho:C^*_{\max}(G)\to C^*_S(G)$ is an
isomorphism; as $S$ is closed, this forces $S=\widehat{G}$.
\end{proof}

We conclude this appendix with two examples showing that one
should not in general expect exact crossed products to satisfy the
Baum-Connes conjecture. 

\begin{examples}\label{badex}
Let $G$ be a non-amenable group, and let $S=\widehat{G_r}\cup \{1\}$,
where $1$ is the class of the trivial representation (compare Example \ref{intrem}).  As $1$ is a finite dimensional representation it is a closed point in $\widehat{G}$, and thus $S$ is a closed
subset of $\widehat{G}$.  Moreover, non-amenability of $G$ implies that $1$ is an
isolated point of $S$.  It follows as in Example \ref{intrem} that there is a \emph{Kazhdan
  projection} $p$ in $C^*_S(G)$ whose image in any representation maps onto the $G$-fixed vectors.  The class of this projection $[p]\in
K_0(C^*_S(G))$ cannot be in the image of the Baum-Connes assembly map
in many cases\footnote{We would guess it can never be in the image,
  but we do not know how to prove this.}: for example, if $G$ is discrete (see
\cite[discussion below 5.1]{Higson:1998qd}), or if the
Baum-Connes conjecture is true for $C^*_r(G)$ (for example if $G$ is
almost connected \cite{Chabert:2003bq}).  Hence the Baum-Connes
conjecture fails for the BG crossed product associated to $S$ in this
case.   

In particular, for any non-amenable discrete or almost connected $G$,
there is an exact crossed product for which the Baum-Connes conjecture
fails.  Note that this is true even for a-T-menable groups, where the Baum-Connes conjecture is true for both the
maximal and reduced crossed products.

A similar, perhaps more natural, example can be arrived at
by starting with $G=SL(2,\Z)$, which is a non-amenable,
a-T-menable group.  Let 
$$
u_n:SL(2,\Z)\to \mathcal{B}(l^2(SL(2,\Z/n\Z)))
$$ 
be the $n^\text{th}$ congruence representation, and define a norm on $C_c(G)$ by
\begin{align*}
\|x\|_{\text{cong}}:=\sup_n\{\|u_n(x)\|\}.
\end{align*}
Note that this norm dominates the reduced norm.  To see this, let $\lambda:G\to \mathcal{U}(l^2(G))$ be the regular representation.  Let $x\in C_c(G)$ and $\xi\in l^2(G)$ have finite support.   As the supports of $\xi$ and $\lambda(x)\xi$ are finite subsets of $G$, they are mapped injectively to $SL(2,\Z/n\Z)$ for all suitably large $n$.  It follows that for all suitably large $n$ we may find $\xi_n$ in $l^2(SL(2,\Z/n\Z))$ with $\|\xi_n\|=\|\xi\|$ and $\|u_n(x)\xi_n\|=\|\lambda(x)\xi\|$: indeed, $\xi_n$ can be taken to be the pushforward of $\xi$.  As $\xi\in l^2(G)$ was an arbitrary element of finite support, the desired inequality $\|x\|_{\text{cong}}\geq \|\lambda(x)\|$ follows from this.

Isolation of the trivial representation in the spectrum of
$C^*_{\text{cong}}(SL(2,\Z))$ is a consequence of Selberg's theorem
\cite{Selberg:1965qy} (see also \cite[Section 4.4]{Lubotzky:1994tw}), and the same construction of a Kazhdan projection goes through.
\\ 

As our second class of examples, let $G$ be any locally compact group and $S$ an
admissible subset of $\widehat{G}$.  Consider the commutative diagram
coming from the Baum-Connes conjecture for the BG crossed product
associated to $S$: 
$$
\xymatrix{ K^{top}_*(G) \ar[r] \ar[dr] & K_*(C^*_{\max}(G)) \ar[d] \\ & K_*(C^*_S(G)) }.
$$
Assuming the Baum-Connes conjecture for the $BG$ $S$-crossed product, the
diagonal map is an isomorphism, and Lemma \ref{unlem2} (together with
the Baum-Connes conjecture for this crossed product and coefficients
in $\mathcal{K}_G$) implies that the vertical map is
an isomorphism.  Hence the horizontal map (the maximal Baum-Connes assembly map) is an isomorphism.   

However, for discrete property (T) groups (see \cite[discussion below
5.1]{Higson:1998qd} again) for example, the maximal assembly map is definitely not an isomorphism.  Hence for
discrete property (T) groups, the Baum-Connes conjecture will fail for
\emph{all} BG crossed products. 
\end{examples}

\bibliographystyle{abbrv}

\bibliography{Generalbib}

\end{document}